\newcommand{\R}{\mathbb{R}}
\newcommand{\F}{\mathcal{F}}
\newtheorem{thm}{Theorem}
\newtheorem{lemma}[thm]{Lemma}
\newtheorem{prop}[thm]{Proposition}
\newtheorem{cor}[thm]{Corollary}
\newtheorem{conj}[thm]{Conjecture}
\theoremstyle{definition}
\newtheorem{definition}[thm]{Definition}
\theoremstyle{remark}
\newtheorem{remark}{Remark}
\newtheorem*{outline}{Outline}
\DeclareMathOperator{\Hess}{Hess}
\DeclareMathOperator{\capac}{cap}
\begin{document}

\title{ADM mass and the capacity-volume deficit at infinity}
\author{Jeffrey L. Jauregui}
\address{Dept. of Mathematics,
Union College, 807 Union St.,
Schenectady, NY 12308,
United States}
\email{jaureguj@union.edu}
\date{\today}

\begin{abstract}
Based on the isoperimetric inequality, G. Huisken proposed a definition of total mass in general relativity that is equivalent to the ADM mass for (smooth) asymptotically flat 3-manifolds of nonnegative scalar curvature, but that is well-defined in greater generality. In a similar vein, we use the isocapacitary inequality (bounding capacity from below in terms of volume) to suggest a new definition of total mass. We prove an inequality between it and the ADM mass, and prove the reverse inequality with harmonically flat asymptotics, or, with general asymptotics, for exhaustions by balls (as opposed to arbitrary compact sets). This approach to mass may have applications to problems involving low regularity metrics and convergence in general relativity, and may have some advantages relative to the isoperimetric mass. Some conjectures, analogs of known results for CMC surfaces and isoperimetric regions, are proposed.
\end{abstract}

\maketitle 

\section{Introduction}
In general relativity, the ADM mass \cite{ADM} of an asymptotically flat (AF) 3-manifold $(M,g)$ captures the total physical mass of $(M,g)$ viewed as a time-symmetric initial data set for Einstein's equation. It is defined through the formula
\begin{equation}
\label{eqn_adm}
m_{ADM}(M,g) = \lim_{r \to \infty} \frac{1}{16\pi}\int_{S_r} \left(\partial_i g_{ij} - \partial_j g_{ii} \right) \frac{x^j}{|x|} dA,
\end{equation}
where the metric coefficients $g_{ij}$ decay to $\delta_{ij}$ in an appropriate coordinate chart $(x^i)$ near infinity and $S_r$ is the coordinate sphere $|x|=r$. Bartnik \cite{Ba0} and Chru\'sciel \cite{Chr} proved the ADM mass is well-defined, i.e. the limit exists and is independent of the asymptotically flat coordinate chart. When the scalar curvature is nonnegative (physically corresponding to nonnegative energy density) and the boundary $\partial M$ is empty or consists of minimal surfaces (corresponding to apparent horizons of black holes), the ADM mass is known to be nonnegative, equaling zero only for Euclidean space. This foundational result is the positive mass theorem, first proved by Schoen and Yau \cite{SY} and by Witten \cite{W}. 

Several other formulas that produce the ADM mass are known, such as the $r \to \infty$ limit of the Brown--York mass or the Hawking mass of $S_r$; see \cite{FST} and the references therein. In a different spirit, the ADM mass can also be recovered through an expression involving the Ricci curvature at infinity; see \cite{MT} and the references therein. All of these expressions and of course \eqref{eqn_adm} depend on, at least, first derivatives of the Riemannian metric.

Many problems in general relativity involving mass naturally lead one to consider low regularity metrics and/or low regularity convergence. For example, a well-known conjecture is the almost-equality case of the positive mass theorem: a sequence of AF manifolds of nonnegative scalar curvature with ADM masses converging to zero must converge (in some appropriate sense) to Euclidean space. However, convergence of the metric tensors in the local $C^2$ or even $C^0$ sense  is too fine for this to be true. Some recent results such as those by Lee and Sormani \cite{LS}, Huang, Lee, and Sormani \cite{HLS}, and Sormani and Stavrov Allen \cite{SS} have strongly suggested such a statement may hold for pointed Sormani--Wenger intrinsic flat ($\mathcal{F}$) convergence \cite{SW}. Closely related but even more challenging is the direct approach to the Bartnik mass-minimization problem \cites{Ba1, Ba2}, essentially showing that a sequence of AF manifolds as above, but all containing a fixed compact set isometrically, with ADM masses limiting to the smallest possible value, must converge outside that set to a static vacuum metric. Again, this can only hold for a relatively weak notion of convergence, such as $\mathcal{F}$, and it seems unavoidable to consider limits that are not even manifolds but metric spaces, possibly with additional structure. Motivated in part by this Bartnik problem, the author as well as the author and Lee showed lower semicontinuity of the ADM mass for $C^2$, $C^0$ and ultimately volume-preserving $\F$-convergence \cites{J_LSC, JL1,JL2}. We refer the reader to Sormani's survey for further discussion of low regularity convergence problems involving the mass and scalar curvature \cite{Sor}.

\medskip

For low regularity problems involving mass,  \eqref{eqn_adm} and the other formulas mentioned above are problematic due to the dependence on derivatives of the metric coefficients. This was one source of motivation for Huisken's definition of isoperimetric mass \cites{Hui1, Hui2}, which is elegantly defined using only areas and volumes (i.e., coarse quantities). The idea is that far out in the asymptotically flat end, the Euclidean isoperimetric inequality almost holds, and the total mass should be detectable in the deficit.

\begin{definition}[Huisken \cites{Hui1,Hui2}]
The \emph{isoperimetric mass} of an asymptotically flat 3-manifold $(M,g)$ is:
\begin{equation}
\label{miso}
m_{iso}(M,g) = \sup_{\{K_j\}} \limsup_{j \to \infty}  \frac{2}{|\partial^* K_j|} \left[|K_j| - \frac{1}{6\sqrt{\pi}} |\partial^*K_j|^{3/2}\right],
\end{equation}
where the supremum is taken over all exhaustions $\{K_j\}$ of $M$ by compact sets, and $|\partial^*K_j|$ and $|K_j|$ are the perimeter and volume of $K_j$, both with respect to $g$.
\end{definition}
Note that $m_{iso} \in [-\infty, \infty]$  is well-defined even if $g$ is merely a $C^0$ Riemannian metric. (See the appendix of \cite{JL2} for a treatment of perimeter for $C^0$ Riemannian metrics. Recall that if $\partial K$ is smooth, the perimeter is simply the area of $\partial K$.) 

As an example: in Euclidean 3-space, the statement $|K_j| - \frac{1}{6\sqrt{\pi}} |\partial^*K_j|^{3/2} \leq 0$ is precisely the classical isoperimetric inequality, so $m_{iso}(\R^3, \delta_{ij}) \leq 0$. That Euclidean 3-space has $m_{iso}$ equal to zero can then be seen from taking an exhaustion by round balls. 

The most compelling reason for regarding $m_{iso}$ as a ``mass'' is the following theorem, first proposed by Huisken \cites{Hui1, Hui2}. 
\begin{thm}
\label{thm_miso}
Let $(M,g)$ be an asymptotically flat 3-manifold of nonnegative scalar curvature whose boundary is either empty or minimal. Then $m_{iso}(M,g) = m_{ADM}(M,g)$.
\end{thm}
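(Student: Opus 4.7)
My plan is to prove $m_{iso} \geq m_{ADM}$ and $m_{iso} \leq m_{ADM}$ separately, by quite different arguments. For the first, the idea is to witness the ADM mass as the limsup of the isoperimetric deficit along a single explicit exhaustion: the coordinate balls $B_r = \{|x| \leq r\}$ in an asymptotically flat chart. Writing $g_{ij} = \delta_{ij} + h_{ij}$ with $h_{ij} = O(|x|^{-\tau})$, Taylor expansion of the Riemannian volume form and the induced area form of $S_r$ yields
\begin{align*}
|B_r|_g &= \tfrac{4\pi}{3}r^3 + \tfrac{1}{2}\int_{B_r} h_{ii}\,dx + o(r^{3-\tau}),\\
|S_r|_g &= 4\pi r^2 + \tfrac{1}{2}\int_{S_r}(h_{ii} - h_{ij}\nu^i\nu^j)\,dA_0 + o(r^{2-\tau}).
\end{align*}
Substituting into $|B_r|_g - \tfrac{1}{6\sqrt{\pi}}|S_r|_g^{3/2}$, the Euclidean terms cancel, and after converting the bulk integral of $h_{ii}$ into a boundary integral via the divergence theorem and collecting terms, the surviving expression matches (up to a factor of $\tfrac{1}{2}|S_r|_g$) the ADM integrand $\tfrac{1}{16\pi}\int_{S_r}(\partial_j h_{ij} - \partial_i h_{jj})\tfrac{x^i}{r}\,dA_0$. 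Taking $r \to \infty$ gives the inequality.

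For $m_{iso} \leq m_{ADM}$, my plan is to reduce first to harmonically flat asymptotics via a Schoen--Yau density argument: any AF metric with nonnegative scalar curvature and empty or minimal boundary can be approximated by a metric which is conformally flat with $g = u^4 \delta$ near infinity, $u$ harmonic with $u = 1 + m/(2|x|) + O(|x|^{-2})$, and whose ADM mass is within $\varepsilon$ of the original. Because perimeter and volume are $C^0$-stable under such perturbations, this reduces the theorem to the harmonically flat case. There, for any exhaustion $\{K_j\}$, I would write $|K_j|_g = \int_{K_j} u^6\,dx$ and $|\partial^* K_j|_g \geq \int_{\partial^* K_j} u^4\, d\mathscr{H}^2_\delta$ (with equality for smooth $\partial K_j$), and bound the deficit by combining (i) the Euclidean isoperimetric inequality applied to $K_j$ to control the ``shape'' contribution, and (ii) integration by parts against $u^5 \Delta u = 0$ together with the asymptotic expansion of $u$ to extract precisely the mass term $m$. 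An alternative route is to run the Huisken--Ilmanen weak inverse mean curvature flow from $\partial K_j$ and use monotonicity of the Hawking mass to bound the deficit by $m_{ADM}$, although that requires separately translating between Hawking mass and isoperimetric deficit.

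The hardest step, and the main obstacle, will be controlling the deficit for \emph{arbitrary} exhaustions, since the supremum in the definition of $m_{iso}$ ranges over all compact exhaustions and not merely round balls. Pathological choices of $K_j$ — long thin protrusions reaching into the AF end, or sets with highly irregular reduced boundary — must be prevented from accumulating spurious extra deficit; ruling this out requires quantitative use of the Euclidean isoperimetric inequality together with the fact that the non-Euclidean geometry is concentrated in a bounded set. The minimal-boundary case adds a further subtlety: exhaustions wrapping tightly around horizons need careful analysis, and the Riemannian Penrose inequality (which bounds horizon area by a function of $m_{ADM}$) plausibly plays a role in controlling the horizon's contribution to the deficit.
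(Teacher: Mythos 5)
This theorem is not proved in the paper; it is quoted with attributions: the inequality $m_{iso}\geq m_{ADM}$ is due to Miao via the expansions of Fan--Shi--Tam \cite{FST}, and $m_{iso}\leq m_{ADM}$ is due to the author and Lee \cite{JL1} and, independently, Chodosh--Eichmair--Shi--Yu \cite{CESY}, with both proofs relying on the weak inverse mean curvature flow of \cite{HI}. Your sketch of the first direction is essentially the cited one. The one point you must actually verify there is that the correction $\tfrac12\int_{B_r}h_{ii}\,dx$ to the volume is of order $r^{3-\tau}$, which dominates the mass term $O(r^2)$ when $\tau<1$; the argument only works because this term cancels exactly against the corresponding $O(r^{2-\tau})$ correction to $|S_r|_g$ inside $\tfrac{1}{6\sqrt\pi}|S_r|_g^{3/2}$ (the $\beta(r)$ cancellation in \cite{FST}, mirrored in the proof of Proposition \ref{prop_lower} here). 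Asserting that the surviving expression ``matches the ADM integrand'' after a divergence theorem is not enough, since $h_{ii}$ is not a divergence; also your stated error $o(r^{3-\tau})$ is too weak to see the mass at order $r^2$.

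The second direction is where your proposal has genuine gaps. First, the density reduction is not justified: the isoperimetric deficit is a difference of quantities of order $r^3$ normalized by $r^2$, so a metric perturbation of size $O(\varepsilon|x|^{-\tau})$ can shift $\tfrac{2}{|\partial K|}\bigl[|K|-\tfrac{1}{6\sqrt\pi}|\partial K|^{3/2}\bigr]$ by roughly $\varepsilon r^{1-\tau}\to\infty$ along an exhaustion. ``$C^0$-stability of perimeter and volume'' does not give stability of $m_{iso}$; semicontinuity of mass under such perturbations is itself the subject of \cite{J_LSC}, \cite{JL1}, \cite{JL2}. Second, even granting harmonically flat asymptotics, the conformal computation you outline loses a factor of $3$. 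With $g=u^4\delta$ and $u=1+\tfrac{m}{2|x|}+O(|x|^{-2})$, the rearrangement bound on $\int_K u^6\,dx$ gives volume radius at most $\rho_\delta+\tfrac{3m}{2}+o(1)$, while the only lower bound your scheme provides for the area radius is $\rho_\delta$ (from $|\partial K|_g\geq|\partial K|_\delta$ plus Euclidean isoperimetry). By Proposition \ref{prop_alt} this yields $m_{iso}\leq 3m$, not $m$: the missing $+m$ in the area radius is the mass's contribution to the area of near-round surfaces, and capturing it for arbitrary exhaustions is exactly the hard content of the theorem. This is a structural difference from the capacity setting, where $\capac_g(K)=\capac_\delta(K)+\tfrac m2$ holds \emph{exactly} under a harmonic conformal change (equation \eqref{cap_g_delta}); perimeter enjoys no such identity, which is why both known proofs of $m_{iso}\leq m_{ADM}$ go through weak inverse mean curvature flow and a monotone quantity interpolating between the Hawking mass and the isoperimetric deficit. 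Your ``alternative route'' names the right tool but leaves that translation --- which is the entire proof --- unexecuted.
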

The inequality $m_{iso}(M,g) \geq m_{ADM}(M,g)$ was shown by Miao using results of Fan, Shi, and Tam (see \cite{FST}). The author and Lee proved $m_{iso}(M,g) \leq m_{ADM}(M,g)$ by working with Huisken's idea of combining mean curvature flow and inverse mean curvature flow, along with generalizing a monotone quantity he discovered \cite{JL1}.  A different proof of this second inequality was given by Chodosh, Eichmair, Shi, and Yu by showing essentially that large CMC spheres enclose isoperimetrically optimal regions \cites{CESY}. We note that both proofs of this second inequality utilize the weak inverse mean curvature flow of Huisken and Ilmanen \cite{HI}.

Thus, in nonnegative scalar curvature, the mass can be recovered from the top-order term of the isoperimetric deficit relative to Euclidean space. An exhaustion attains the supremum in the definition of $m_{iso}$ when the compact sets are becoming as isoperimetrically efficient as possible as $j \to \infty$.

\medskip 
The primary purpose of this paper is to carry out an analog of Huisken's program for defining the mass based on the \emph{isocapacitary} inequality (relating capacity to volume) in place of the isoperimetric inequality, and to hint that the ``isocapacitary mass'' might be particularly well-suited to low regularity problems. Recall the \emph{capacity} of a compact set $K \supseteq \partial M$ in an AF 3-manifold $(M,g)$ is the number
\begin{equation}
\label{eqn_cap}
\capac_g(K) = \inf_{\psi} \frac{1}{4\pi} \int_M |\nabla \psi|_g^2 dV_g,
\end{equation}
where the infimum is taken over all locally Lipschitz functions $\psi$ on $M$ that vanish on $K$ and uniformly approach 1 at infinity. We will drop the $g$ subscripts when the choice of metric is clear. From standard considerations in the calculus of variations and elliptic theory, a minimizer in \eqref{eqn_cap} must be harmonic in $M \setminus K$. If $\partial K$ is mildly regular (e.g., a $C^1$ surface), this infimum is uniquely achieved by the capacitary potential of $K$, i.e. by the unique function $\varphi$ that is harmonic on $M \setminus K$, tending to 1 at infinity, vanishing on $\partial K$,  extended by 0 to $K$. Then by the divergence theorem, $\capac(K)$ equals $\frac{1}{4\pi} \int_{\Sigma} \frac{\partial \varphi}{\partial \nu} dA$, where $\Sigma$ is any surface enclosing $K$ and $\nu$ is the outward-pointing normal derivative.  For example, a ball in Euclidean space has capacity equal to its radius.

Recall that the Poincar\'e--Faber--Szeg\"o inequality (or isocapacitary inequality) bounds the capacity of a set from below in terms of its volume; see \cite[section 3.2]{PS} for a standard proof based on spherical symmetrization.
\begin{thm} [Poincar\'e--Faber--Szeg\"o]
\label{thm_isocap}
Let $K$ be a compact region in Euclidean 3-space. Then
$$\capac(K) \geq \left(\frac{3 |K|}{4\pi}\right)^{1/3}.$$
If equality holds and $\partial K$ is smooth, then $K$ is a closed ball.
\end{thm}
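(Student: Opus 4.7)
The plan is to adapt the Pólya--Szegő spherical symmetrization argument, applied directly to the capacitary potential. Assume first that $\partial K$ is smooth enough that the capacitary potential $\varphi$ exists as a harmonic function on $\R^3 \setminus K$ vanishing on $\partial K$ and tending to $1$ at infinity (the general case follows by outer approximation by smooth compacta). Then $4\pi\,\capac(K) = \int_{\R^3} |\nabla \varphi|^2 \, dV$, and we seek to bound this integral below in terms of $|K|$ alone.

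The heart of the argument is to slice $\R^3 \setminus K$ by level sets of $\varphi$ and reduce matters to a one-variable calculation. Let $V(t) := |\{\varphi \leq t\}|$. Two applications of the co-area formula give
\[
\int_{\R^3} |\nabla\varphi|^2 \, dV = \int_0^1 \int_{\{\varphi = t\}} |\nabla\varphi| \, dA \, dt, \qquad V'(t) = \int_{\{\varphi = t\}} |\nabla \varphi|^{-1} \, dA.
\]
Cauchy--Schwarz on each level set gives $|\{\varphi = t\}|^2 \leq V'(t) \int_{\{\varphi = t\}} |\nabla \varphi| \, dA$, while the Euclidean isoperimetric inequality gives $|\{\varphi = t\}| \geq (36\pi)^{1/3} V(t)^{2/3}$. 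Combining,
\[
4\pi\,\capac(K) \geq (36\pi)^{2/3} \int_0^1 \frac{V(t)^{4/3}}{V'(t)} \, dt.
\]

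To extract the sharp constant, substitute $R(t) = (3 V(t)/(4\pi))^{1/3}$, the radius of the Euclidean ball of volume $V(t)$, so that $R(0) = (3|K|/(4\pi))^{1/3}$ and $R(t) \to \infty$ as $t \to 1$. A direct computation rewrites the right-hand side as $\int_0^1 4\pi R(t)^2 / R'(t) \, dt$. A second Cauchy--Schwarz application, using $\int_0^1 dt = \int_{R(0)}^\infty (R')^{-1} \, dR = 1$ together with $\int_{R(0)}^\infty (4\pi R^2)^{-1} \, dR = (4\pi R(0))^{-1}$, bounds this integral below by $4\pi R(0)$, which yields $\capac(K) \geq R(0)$ after dividing through by $4\pi$. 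Equality forces $|\nabla \varphi|$ to be constant on each level set (from the first Cauchy--Schwarz) and each level set to be a round sphere (from the isoperimetric inequality); by harmonicity and the boundary conditions, $\varphi$ must then take the form $1 - R(0)/|x - x_0|$, forcing $K = \overline{B_{R(0)}(x_0)}$.

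The main technical obstacle is justifying the two applications of the co-area formula on the level sets of $\varphi$, since $\varphi$ may have critical points in $\R^3 \setminus K$. This can be handled by restricting to regular values of $\varphi$ (a full-measure set by Sard's theorem), on which level sets are smooth surfaces of finite area, and absorbing the exceptional set into the monotonicity of $V$; alternatively, one may invoke the BV version of the co-area formula valid for Lipschitz functions.
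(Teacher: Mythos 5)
Your proof is correct and is essentially the standard spherical-symmetrization argument of P\'olya--Szeg\H{o} that the paper cites (rather than proves) for Theorem~\ref{thm_isocap}; it is also precisely the computation the paper itself adapts in the proof of Theorem~\ref{thm_ub} (the co-area slicing, Cauchy--Schwarz on level sets, the isoperimetric inequality, and the substitution $V(t)=\tfrac{4\pi}{3}R(t)^3$ leading to the bound by $R(0)=\left(\tfrac{3|K|}{4\pi}\right)^{1/3}$). The only cosmetic difference is that you close the argument with a second Cauchy--Schwarz in $t$, whereas the paper compares $\int_0^1 4\pi R^2/R'\,dt$ with the Dirichlet energy of a radial test function for $\capac_0(\tilde B_{R(0)})$ --- these are equivalent.
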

Motivated by this inequality and by Huisken's definition of isoperimetric mass, we propose:
\begin{definition}
\label{def_mCV}
Let $(M,g)$ be an asymptotically flat 3-manifold. Define the \emph{isocapacitary} (or \emph{capacity-volume}) \emph{mass} of $(M,g)$ to be
\begin{equation}
\label{m_CV}
m_{CV}(M,g) = \sup_{\{K_j\}} \limsup_{j \to \infty} \frac{1}{4\pi\capac(K_j)^2} \left[ |K_j|- \frac{4\pi}{3}\capac(K_j)^3  \right],
\end{equation}
where the supremum is taken over all exhaustions $\{K_j\}$ of $M$ by compact sets.
\end{definition}
Later we will show that $m_{CV}$ can be written more cleanly using the difference between the volume radius and the capacity:
\begin{equation}
\label{m_CV2}
m_{CV}(M,g) = \sup_{\{K_j\}} \limsup_{j \to \infty} \left[ \left(\frac{3|K_j|}{4\pi}\right)^{1/3} - \capac(K_j)  \right],
\end{equation}
the form we will prefer throughout the paper.  (Note that expression \eqref{m_CV} more closely resembles Huisken's definition \eqref{miso}, but we point out in the appendix that \eqref{miso} can also be rewritten using twice the difference between the volume radius and area radius. However, the forms \eqref{miso} and \eqref{m_CV} generalize better to higher dimensions; again, see the appendix.)

For example, $m_{CV}(\R^3, \delta_{ij}) \leq 0$ by Theorem \ref {thm_isocap}. That $m_{CV}(\R^3, \delta_{ij}) \geq 0$ can be seen by choosing a sequence of balls of increasing radius. In general, $m_{CV}$ is manifestly well-defined as an extended real number, though it is not obvious that $m_{CV} \neq \pm \infty$.

We point out that although the quantity $\left(\frac{3|K|}{4\pi}\right)^{\frac{1}{3}} - \capac(K) $ depends on both the geometry of $K$ and its complement, i.e. on all of $M$, $m_{CV}$ only depends on the geometry of the end (Lemma \ref{lemma_end}).

Our primary goal is to argue $m_{CV}$ is a reasonable general relativistic mass via the following results. First, $m_{CV}$ is bounded below by the ADM mass:
\begin{thm}
\label{thm_lb} 
Let $(M,g)$ be an asymptotically flat 3-manifold whose scalar curvature is nonnegative outside of a compact set. Then
\begin{equation}
\label{ineq_lb}
m_{CV}(M,g) \geq m_{ADM}(M,g).
\end{equation}
\end{thm}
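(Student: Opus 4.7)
The plan is to construct a single exhaustion that achieves at least $m_{ADM}$ in the limit superior appearing in \eqref{m_CV2}; since $m_{CV}$ is defined by a supremum over exhaustions, this bounds it below by $m_{ADM}$. The natural choice is the exhaustion by coordinate balls in an asymptotically flat chart. I would fix such a chart $\Phi: M \setminus K_0 \to \R^3 \setminus \overline{B_1}$ with $K_0 \supseteq \partial M$ a compact set, write $g_{ij} = \delta_{ij} + h_{ij}$, set $K_r = K_0 \cup \Phi^{-1}(\overline{B_r})$, and reduce the theorem to showing
\[
\liminf_{r \to \infty} \left[\left(\frac{3|K_r|_g}{4\pi}\right)^{1/3} - \capac_g(K_r)\right] \geq m_{ADM}(M,g).
\]

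I would expand both quantities asymptotically in $r$. The volume expansion follows from $\sqrt{\det g} = 1 + \tfrac{1}{2} h_{ii} + O(|h|^2)$: Taylor expansion and integration over $B_r$ yield $|K_r|_g = \tfrac{4\pi}{3} r^3 + \alpha(r) + o(r^2)$, and taking the cube root gives $(3|K_r|_g/4\pi)^{1/3} = r + \alpha(r)/(4\pi r^2) + o(1)$. The capacity expansion is the more technical half. I would linearize the $g$-capacitary potential $\varphi_r$ of $K_r$ about the Euclidean model $\varphi_r^\delta = 1 - r/|x|$, writing $\varphi_r = \varphi_r^\delta + \psi_r$ and using weighted H\"older/Sobolev estimates for the elliptic equation $\Delta_g \psi_r = -\Delta_g \varphi_r^\delta$ (whose source is small in the AF end). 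Since $\capac_g(K_r)$ is the monopole coefficient of $1 - \varphi_r$ at infinity, extracting this from $\psi_r$ produces $\capac_g(K_r) = r + \beta(r) + o(1)$. The final matching step is an integration-by-parts computation showing
\[
\frac{\alpha(r)}{4\pi r^2} - \beta(r) = \frac{1}{16\pi} \int_{S_r} (\partial_j h_{ij} - \partial_i h_{jj})\nu^i\, dA + o(1),
\]
which tends to $m_{ADM}$ by \eqref{eqn_adm}. The hypothesis that scalar curvature is nonnegative outside a compact set should enter only to provide uniform control of error terms.

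\textbf{Main obstacle.} The capacity expansion is the critical and subtle step. Unlike the volume, capacity is a global elliptic functional, and tracking its leading correction in AF coordinates requires quantitative control of the potential $\varphi_r$ uniformly as $r \to \infty$. Moreover, matching the correction $\beta(r)$ to the precise linear combination of $h_{ij}$-derivatives appearing in the ADM integrand is a further delicate algebraic step that requires careful handling of several boundary and bulk integrals.
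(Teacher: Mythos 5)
Your overall strategy coincides with the paper's: take the exhaustion by coordinate balls and compare the asymptotic expansions of the volume radius and the capacity. The gap is in the capacity half and in the "matching step," and it is a real one, not a technicality. First, a direct linearization of the capacitary potential about $1-r/|x|$ with weighted elliptic estimates does not pin down $\capac_g(B_r)$ to $o(1)$ precision: for a metric of order $\tau\in(\tfrac12,1)$ the first-order correction to the capacity is a bulk integral of the metric perturbation $\sigma_{ij}$ over $\{|x|\ge r\}$ of size $O(r^{1-\tau})$, which can dominate the constant term $\tfrac m2$ you need to extract. (Compare Lemma \ref{lemma_potential}, which by exactly such elliptic estimates controls $c(\rho)$ only to within $O(\rho^{1-\tau})$; the $o(1)$-precise expansion of $\capac(B_r)$ appears in the paper only as Corollary \ref{cor_expansion}, \emph{after} both geometric inequalities are established, precisely because the direct analytic route does not yield it.) Second, the proposed identity
$$\frac{\alpha(r)}{4\pi r^2}-\beta(r)=\frac{1}{16\pi}\int_{S_r}(\partial_j h_{ij}-\partial_i h_{jj})\nu^i\,dA+o(1)$$
cannot be an integration-by-parts computation: the left side consists of bulk integrals of $h$ itself (no derivatives) over complementary regions, while the right side is a surface integral of first derivatives of $h$. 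The only mechanism that converts such bulk data into the ADM surface integrand is the (linearized) scalar curvature $\partial_i\partial_j h_{ij}-\Delta h_{ii}$, so the scalar curvature hypothesis enters the leading-order identity itself, not merely "uniform control of error terms" as you suggest.

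The paper circumvents both issues by never computing the capacity expansion directly. It bounds $\capac(B_r)$ from above by the Bray--Miao inequality (Theorem \ref{thm_bray_miao}, proved in \cite{BM} using nonnegative scalar curvature at the level of the positive mass theorem), expands the resulting area and Willmore-energy terms using the Fan--Shi--Tam estimates \cite{FST}, and expands the volume via the FST identity $V(r)=\tfrac12 rA(r)-\tfrac{2\pi r^3}{3}+2\pi m r^2+o(r^2)$; the potentially divergent $\beta(r)/r$ terms then cancel between the two expansions, leaving exactly $m$. If you want to salvage your approach, you should replace the elliptic linearization by a test-function upper bound on capacity (which is all the inequality requires), but you will still need an input of Bray--Miao or FST type to relate the surviving bulk terms to the ADM mass; there is no purely algebraic matching.
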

Second, we show the reverse inequality holds on exhaustions by coordinate balls.
\begin{thm}
\label{thm_ub}
Let $(M,g)$ be an asymptotically flat 3-manifold with nonnegative scalar curvature, with $\partial M$ empty or else consisting of minimal surfaces. Let $B_r$ be the closed coordinate ball of radius $r$. Then
$$\limsup_{r \to \infty}  \left[ \left(\frac{3|B_r|}{4\pi}\right)^{1/3} - \capac(B_r)\right] \leq m_{ADM}(M,g).$$
\end{thm}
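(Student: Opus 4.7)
The plan is to first handle the case of harmonically flat asymptotics, where the capacitary potential of $B_r$ admits an explicit closed form, and then reduce the general case to this via a density argument. Suppose first that $g = U^4 \delta$ outside a compact set, for some positive Euclidean-harmonic function $U$ with $U = 1 + m/(2|x|) + O(|x|^{-2})$ at infinity, so that $m = m_{ADM}(g)$. The three-dimensional conformal identity $\Delta_g(v/U) = U^{-5}\Delta_\delta v$, valid whenever $U$ is Euclidean-harmonic, applied to $v = 1 - r/|x|$ shows that for $r$ sufficiently large, the function $\psi(x) = (1 - r/|x|)/U(x)$ (extended by zero on $B_r$) is $g$-harmonic on $M \setminus B_r$, vanishes on $\partial B_r$, and tends to $1$ at infinity. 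Hence $\psi$ is the $g$-capacitary potential of $B_r$, and expanding at infinity gives
\[
\capac_g(B_r) = r + \tfrac{m}{2}, \qquad \left(\tfrac{3|B_r|_g}{4\pi}\right)^{1/3} = r + \tfrac{3m}{2} + O(r^{-1}),
\]
so the difference tends to $m = m_{ADM}(g)$, proving the theorem with equality in the limit in this case.

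For the general case, I would invoke a density theorem (as in Schoen--Yau, Miao, or Corvino--Schoen) to produce a family $\{g_\eta\}$ of AF metrics with $R_{g_\eta} \geq 0$, harmonically flat outside a compact set, agreeing with $g$ on an exhausting sequence of compact sets, and satisfying $m_{ADM}(g_\eta) \leq m_{ADM}(g) + \eta$. Because such a construction can be arranged to preserve the leading ADM behavior at infinity, the $C^0$-norm of $g - g_\eta$ on $M \setminus B_r$ should decay like $O(r^{-2})$ for $r$ large. Using the $g_\eta$-capacitary potential of $B_r$ as a trial function for $\capac_g(B_r)$ (and vice versa), together with this decay, one shows $|\capac_g(B_r) - \capac_{g_\eta}(B_r)| = o(1)$ as $r \to \infty$, and likewise for the volumes. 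Combining with the explicit computation applied to $g_\eta$ then yields
\[
\limsup_{r \to \infty}\left[\left(\tfrac{3|B_r|}{4\pi}\right)^{1/3} - \capac(B_r)\right] \leq m_{ADM}(g_\eta) \leq m_{ADM}(g) + \eta,
\]
and letting $\eta \to 0$ finishes the proof.

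The delicate point is the comparison of capacities under the approximation: since $\capac(B_r)$ depends on the metric on the entire unbounded region $M \setminus B_r$, the argument requires a version of the density construction for which this comparison is quantitative and uniform in $r$. The technical heart of the proof is thus the careful execution of the interchange of the $r \to \infty$ and $\eta \to 0$ limits, and the main obstacle will be verifying that the standard density theorems are flexible enough to produce approximations whose leading asymptotic data match those of $g$ closely enough for the capacity comparison to be uniform in $r$.
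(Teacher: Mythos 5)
Your harmonically flat computation is fine (and agrees with what the paper does for Schwarzschild in Proposition \ref{prop_schwarz} and for HF metrics in Theorem \ref{thm_HF}): the conformal identity shows $\capac_g(B_r)=\capac_0(B_r)+\tfrac{m}{2}$ and the volume radius is $r+\tfrac{3m}{2}+o(1)$, so the deficit converges to $m$. The problem is that this case is not where the difficulty lives; the reduction from general asymptotics to HF asymptotics is the entire content of the theorem, and the density step as you describe it does not work.

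The specific claim that fails is that the approximating metrics $g_\eta$ can be chosen with $\|g-g_\eta\|_{C^0(M\setminus B_r)}=O(r^{-2})$. A metric that is harmonically flat outside a compact set satisfies $g_\eta-\delta = \tfrac{2m}{|x|}\delta + O(|x|^{-2})$, while a general AF metric of order $\tau\in(\tfrac12,1]$ has $g-\delta=\sigma=O(|x|^{-\tau})$ with no particular structure; forcing $g-g_\eta=O(|x|^{-2})$ would mean $g$ was already harmonically flat modulo $O(|x|^{-2})$, which is false in general. What the Schoen--Yau/Corvino-type density theorems actually give is smallness in a weighted norm, i.e.\ $|g-g_\eta|\lesssim \eta\,|x|^{-\tau'}$ for some $\tau'\le\tau$. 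Feeding this into the uniform-equivalence bounds for capacity and volume (the ones recorded in Section 4 of the paper, $\Lambda^{-3}\capac_{g_2}\le\capac_{g_1}\le\Lambda^3\capac_{g_2}$ with $\Lambda=1+C\eta r^{-\tau'}$ on $M\setminus B_r$) perturbs $\capac(B_r)$ and $(3|B_r|/4\pi)^{1/3}$ --- each of size $\sim r$ --- by $O(\eta\, r^{1-\tau'})$. For $\tau'<1$ this error is unbounded in $r$ for each fixed $\eta$, so it swamps the $O(1)$ deficit you are trying to control, and the interchange of the $r\to\infty$ and $\eta\to 0$ limits cannot be justified. This is not a technicality you can expect to remove by being more careful with the density theorem: if the reduction worked, it would combine with Theorem \ref{thm_HF} (which handles \emph{arbitrary} exhaustions of HF metrics) to prove the full conjecture $m_{CV}\le m_{ADM}$, which the paper explicitly leaves open. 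The same obstruction is why the isoperimetric-mass literature does not prove $m_{iso}\le m_{ADM}$ by density either.

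The paper's actual proof is direct and quite different: it runs the P\'olya--Szeg\H{o} symmetrization argument for the isocapacitary inequality on the $g$-capacitary potential $\varphi_\rho$ of $B_\rho$, isolating the Euclidean term (which produces the volume radius) from an error term governed by the isoperimetric deficits $1-I(t)^{4/3}$ of the level sets $\Sigma_t=\varphi_\rho^{-1}(t)$. These deficits are controlled by the quasi-local isoperimetric mass estimate of Theorem \ref{thm_JL}, and the resulting integrals are estimated using uniform asymptotics for $\varphi_\rho$ (Lemma \ref{lemma_potential}) and an area growth bound for the level sets (Lemma \ref{lemma_isop}). If you want to complete your approach instead, you would need to supply a genuinely new uniform-in-$r$ capacity and volume comparison under the density approximation, and there is good reason to believe none exists at the stated level of generality.
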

Third, we are able to show the reverse of \eqref{ineq_lb} for general exhaustions by assuming stronger asymptotics on $(M,g)$:
\begin{thm}
\label{thm_HF}
Let $(M,g)$ be a Riemannian 3-manifold that is harmonically flat at infinity with nonnegative ADM mass. Then
\begin{equation}
\label{eqn_ub}
m_{CV}(M,g) \leq m_{ADM}(M,g).
\end{equation}
\end{thm}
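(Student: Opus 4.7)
The plan is to exploit the conformally flat structure at infinity. Harmonic flatness provides a compact core $C_0 \subset M$ and a diffeomorphism identifying $M \setminus C_0$ with $\R^3 \setminus B_R$ in which $g = U^4 \delta$, for a function $U$ that is $\delta$-harmonic on the coordinate exterior with $U = 1 + A/|x| + O(|x|^{-2})$, where $A = m_{ADM}(M,g)/2 \geq 0$ by hypothesis. Because $U$ is $\delta$-harmonic, the conformal Laplacian formula in dimension three gives the identity $\Delta_g \psi = U^{-5} \Delta_\delta(U\psi)$ on $M \setminus C_0$. Consequently, for any compact $K \supset C_0$, letting $\Omega \subset \R^3$ be the closed region whose Euclidean complement corresponds to $M \setminus K$ under the coordinates (so $\Omega \supseteq B_R$), the $g$-capacitary potential $\varphi_g$ of $K$ satisfies $U \varphi_g = \tilde\varphi$, where $\tilde\varphi$ is the Euclidean capacitary potential of $\Omega$. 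Reading off the coefficient of $-1/|x|$ in the asymptotic expansion of $\varphi_g = \tilde\varphi/U$ yields the clean capacity identity
\[
\capac_g(K) = \capac_\delta(\Omega) + A.
\]

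Combined with the Euclidean isocapacitary inequality (Theorem \ref{thm_isocap}), this gives
\[
\left(\frac{3|K|_g}{4\pi}\right)^{\!1/3} - \capac_g(K) \;\leq\; \left(\frac{3|K|_g}{4\pi}\right)^{\!1/3} - \left(\frac{3|\Omega|_\delta}{4\pi}\right)^{\!1/3} - A,
\]
so it suffices to show the cube-root difference on the right is at most $3A + o(1)$ along any exhaustion. To do this I would write $|K|_g - |\Omega|_\delta = C + \int_{\Omega \setminus B_R}(U^6 - 1)\, dV_\delta$ with $C$ a bounded constant, use the pointwise expansion $U^6 - 1 = 3 m_{ADM}/|x| + O(|x|^{-2})$ valid on the end, and apply the Hardy--Littlewood rearrangement inequality: because the leading majorant $3m_{ADM}/|x|$ is radially decreasing and \emph{nonnegative} (using $m_{ADM} \geq 0$), the integral is largest when $\Omega \setminus B_R$ is rearranged to a concentric annulus, producing
\[
|K|_g - |\Omega|_\delta \;\leq\; 6\pi\, m_{ADM}\!\left(\frac{3|\Omega|_\delta}{4\pi}\right)^{\!2/3} + O\!\left(|\Omega|_\delta^{1/3}\right).
\]
Feeding this into the concavity estimate $(3V_1/(4\pi))^{1/3} - (3V_0/(4\pi))^{1/3} \leq \tfrac{1}{3}(3/(4\pi))^{1/3} V_0^{-2/3}(V_1 - V_0)$ for the cube root produces exactly $\tfrac{3}{2} m_{ADM} + o(1)$; combined with the $-A$ term this yields $m_{ADM} + o(1)$, and taking the supremum over exhaustions gives $m_{CV}(M,g) \leq m_{ADM}(M,g)$.

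The main obstacle is precisely the use of $m_{ADM} \geq 0$ in the rearrangement step: without it the majorant is not nonnegative and the argument breaks (indeed the direction $m_{CV} \leq m_{ADM}$ is genuinely subtler when the ADM mass is negative, which accounts for the nonnegativity hypothesis in the statement). Two minor technical points also need attention: one should verify that $|K_j|_g \to \infty$ forces $|\Omega_j|_\delta \to \infty$ (immediate since $U$ is bounded above on the end), and one should regularize the generally rough compact sets $K_j$ by slightly larger open sets with $C^1$ boundary so that the classical Euclidean capacitary potential and Poincar\'e--Faber--Szeg\"o inequality apply; continuity of capacity and volume under such approximations ensures the limsup is unaffected.
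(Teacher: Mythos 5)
Your proposal is correct and follows essentially the same route as the paper's proof: the conformal identity $\varphi_g = \tilde\varphi\, U^{-1}$ yielding $\capac_g(K) = \capac_\delta(K) + \tfrac{m}{2}$, the volume comparison against the concentric ball of equal Euclidean volume using that the majorant $3m/|x|$ is radially decreasing and nonnegative (exactly where the paper also uses $m_{ADM}\geq 0$), and the Euclidean Poincar\'e--Faber--Szeg\"o inequality. The only cosmetic differences are that you cite Hardy--Littlewood rearrangement where the paper verifies the ball comparison directly, and you absorb the $O(|x|^{-2})$ error into an $O(|\Omega|_\delta^{1/3})$ volume term rather than into a coefficient $C/\rho_0$ sent to zero at the end.
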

The precise definition of harmonically flat is recalled in section \ref{sec_HF}. An immediate consequence of Theorems \ref{thm_lb}, \ref{thm_HF}, and the positive mass theorem is:
\begin{cor}
\label{cor_main}
Let $(M,g)$ be a Riemannian 3-manifold that is harmonically flat at infinity, with nonnegative scalar curvature and boundary that is either empty or minimal. Then
$$m_{CV}(M,g) = m_{ADM}(M,g).$$
\end{cor}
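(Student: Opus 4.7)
The plan is to verify the hypotheses of Theorems \ref{thm_lb} and \ref{thm_HF} and then chain their conclusions. The corollary is purely formal once those two theorems and the positive mass theorem are in hand, so the work is really just a bookkeeping check rather than new analysis.

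First I would apply Theorem \ref{thm_lb} to obtain the lower bound $m_{CV}(M,g) \geq m_{ADM}(M,g)$. For this, two hypotheses must be checked: that $(M,g)$ is asymptotically flat, and that the scalar curvature is nonnegative outside a compact set. Both are immediate from our assumptions, since harmonic flatness at infinity is a stronger form of asymptotic flatness (with exact asymptotics determined by a harmonic function in the chart near infinity, as will be recalled in section \ref{sec_HF}), and the scalar curvature is nonnegative on all of $M$ by hypothesis.

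Second, I would produce the reverse inequality by invoking Theorem \ref{thm_HF}. This theorem additionally requires $m_{ADM}(M,g) \geq 0$, which is precisely the content of the positive mass theorem of Schoen--Yau \cite{SY} and Witten \cite{W} (with the horizon-boundary version used when $\partial M$ is nonempty and minimal). Since $(M,g)$ is asymptotically flat with nonnegative scalar curvature and either empty or minimal boundary, the hypotheses of the positive mass theorem are met, and $m_{ADM}(M,g) \geq 0$ follows. Theorem \ref{thm_HF} now yields $m_{CV}(M,g) \leq m_{ADM}(M,g)$.

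Combining the two inequalities gives $m_{CV}(M,g) = m_{ADM}(M,g)$. There is no genuine obstacle here: the only point that requires any care is ensuring that the positive mass theorem is invoked in the form compatible with a (possibly nonempty) minimal boundary, so that the sign hypothesis in Theorem \ref{thm_HF} can be supplied. All substantive content sits in the two cited theorems above.
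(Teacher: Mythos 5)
Your proposal is correct and matches the paper exactly: the paper presents Corollary \ref{cor_main} as an immediate consequence of Theorems \ref{thm_lb} and \ref{thm_HF} together with the positive mass theorem supplying the sign hypothesis $m_{ADM}\geq 0$ needed for Theorem \ref{thm_HF}. Your hypothesis-checking (harmonic flatness at infinity being a special case of asymptotic flatness, and global nonnegative scalar curvature implying the "outside a compact set" version) is exactly the bookkeeping the paper leaves implicit.
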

In particular, for the Schwarzschild manifold of mass $m > 0$, $m_{CV} = m$.

As with Huisken's isoperimetric mass, the most difficult case seems to be showing the inequality \eqref{eqn_ub} for general exhaustions and general asymptotically flat metrics. We conjecture this to hold under the hypotheses in Theorem \ref{thm_ub}, i.e., the capacity-volume deficit at infinity detects the total mass.

\medskip

Aside from $m_{CV}$ providing another way to understand total mass in general relativity, we suggest that $m_{CV}$ may be a useful tool particularly in the setting of low regularity metrics or low regularity convergence. Just like $m_{iso}$, $m_{CV}$ is defined without use of derivatives of $g$. In particular, it makes sense for $C^0$ Riemannian metrics. We discuss in section \ref{sec_low_reg} some features of the definition that seem to behave more nicely than $m_{iso}$ in lower regularity, considering, for example, $\F$ convergence and $L^p_{\text{loc}}$ convergence.

We point out that the capacity has appeared many times in the study of mass in general relativity, recalling some instances as follows. Bartnik's famous definition of quasi-local mass was presented as a ``nonlinear geometric capacity'' \cite{Ba1}. As part of his proof of the Riemannian Penrose inequality \cite{Bray_RPI}, Bray showed that the ADM mass of an AF 3-manifold of nonnegative scalar curvature and minimal boundary is bounded below by the capacity of the boundary. Bray and Miao generalized this result to produce a corresponding inequality that is valid for non-minimal boundary \cite{BM}. Capacity arises in the study of zero area singularities, which are generalizations of the negative mass Schwarzschild singularity \cites{bray_npms, zas_robbins, zas, J_penrose}. The author observed the difference between the ADM mass and twice the capacity of the boundary is an invariant of the harmonic conformal class of an AF manifold with boundary \cite{J_HCI}. In the conformally flat case, Schwartz as well as Freire and Schwartz proved inequalities relating the ADM mass and the capacity \cites{Schw, FS}. Very recently, Mantoulidis, Miao, and Tam proved some very interesting upper bounds on the capacity in terms of quasi-local mass and established the isocapacitary inequality for Schwarzschild manifolds of positive mass \cite{MMT}. 

\begin{outline}
Section \ref{sec_background} includes some basic lemmas regarding capacity and the capacity-volume mass, as well as a direct demonstration that $m_{CV}$ agrees with $m_{ADM}$ in Schwarzschild metrics of positive mass. The proof of Theorem \ref{thm_lb} (the lower bound on $m_{CV}$) appears in section \ref{sec_lb}. The main ingredients in the proof are the capacity upper bound of Bray and Miao \cite{BM} and the asymptotic estimates of Fan, Shi, and Tam \cite{FST}. 
The proof of Theorem \ref{thm_ub} (the upper bound on $m_{CV}$ when restricting to balls) is in section \ref{sec_ub}. This is the most subtle part of the paper. The approach requires revisiting the proof of the Poincar\'e--Faber--Szeg\"o inequality, a
quasi-local isoperimetric mass estimate of the author and Lee \cite{JL1}, and some estimates for harmonic functions in the AF end. We also give some partial results on the conjectured inequality $m_{CV} \leq m_{ADM}$ without restricting the exhaustions, and in Corollary \ref{cor_expansion}, obtain an expansion of the capacity of a large ball in terms of the radius.  Section \ref{sec_HF} includes the proof of Theorem \ref{thm_HF}, i.e., this inequality for harmonically flat at infinity metrics. In section \ref{sec_low_reg} we discuss $m_{CV}$ for lower regularity metrics and its behavior for $L^p_{\text{loc}}$ convergence, as well as some possible advantages in its definition. We conclude in section \ref{sec_conj} with some conjectures regarding Bernoulli surfaces and isocapacitary regions, based on analogy with constant mean curvature surfaces and isoperimetric regions. A brief appendix shows some facts about Huisken's isoperimetric mass.
\end{outline}

\section{Definition, lemmas, and the Schwarzschild case}
\label{sec_background}
\begin{definition}
\label{def_AF}
A smooth, connected Riemannian $3$-manifold $(M,g)$, possibly with compact boundary, is \emph{asymptotically flat (AF)} if there exists a compact set $K \subset M$ and a diffeomorphism
$\Phi: M \setminus K \to \R^3 \setminus B$, for a closed ball $B$, such that in the \emph{asymptotically flat coordinates} $x=(x^1, x^2, x^3)$ given by $\Phi$, we have
\begin{equation}
\label{decay}
g_{ij} = \delta_{ij} + \sigma_{ij},
\end{equation}
where, for some constant $\tau > \frac{1}{2}$ (the \emph{order}), we have
$$\sigma_{ij} = O(|x|^{-\tau}), \qquad \partial_k\sigma_{ij} =  O(|x|^{-\tau-1}), \qquad \partial_k\partial_\ell \sigma_{ij} = O(|x|^{-\tau-2})$$
as $|x| \to \infty$. Moreover, we require the scalar curvature of $g$ to be integrable.
\end{definition}

\begin{remark}
In \cite{FST}, decay is also assumed for third derivatives of $\sigma_{ij}$. However, the parts of that paper we will use only require decay through second derivatives. In some other cases, cited results assume $\tau=1$, but only $\tau > \frac{1}{2}$ is needed.
\end{remark}

\begin{lemma}
\label{lemma_mcv}
Expressions \eqref{m_CV} and \eqref{m_CV2} for the isocapacitary mass $m_{CV}$ are equal.
\end{lemma}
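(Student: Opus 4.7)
The plan is to show that along every exhaustion the two summands in \eqref{m_CV} and \eqref{m_CV2} have identical $\limsup$, after which taking the supremum gives the lemma. Write $v_j = \left(\frac{3|K_j|}{4\pi}\right)^{1/3}$ (the volume radius) and $c_j = \capac(K_j)$, and denote by $A_j$ and $B_j$ the bracketed summands in \eqref{m_CV} and \eqref{m_CV2}, respectively. Using $|K_j| = \frac{4\pi}{3} v_j^3$ and factoring $v_j^3 - c_j^3 = (v_j - c_j)(v_j^2 + v_j c_j + c_j^2)$, a short computation gives the identity
$$A_j \;=\; \frac{v_j^3 - c_j^3}{3 c_j^2} \;=\; B_j \;+\; \frac{B_j^2}{c_j}\left(1 + \frac{B_j}{3 c_j}\right).$$
Because $v_j \geq 0$, we always have $B_j / c_j \geq -1$, whence $1 + B_j/(3 c_j) \geq 2/3 > 0$ and $A_j \geq B_j$. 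This at once yields $\sup_{\{K_j\}} \limsup A_j \geq \sup_{\{K_j\}} \limsup B_j$.

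For the reverse inequality I would fix an exhaustion and prove $\limsup A_j \leq \limsup B_j$. The one geometric input needed is that $c_j \to \infty$: since $\{K_j\}$ exhausts $M$, eventually $K_j$ contains any coordinate ball $B_R$ in the AF end, so by monotonicity of capacity and the standard Euclidean scaling $\capac(B_R) \sim R$, the capacities $c_j$ diverge. Now pass to a subsequence along which $B_j$ converges to some $L \in [-\infty, +\infty]$ and split into three cases. If $L$ is finite, then $B_j$ is bounded and the correction term $(B_j^2/c_j)(1 + B_j/(3 c_j))$ tends to $0$, so $A_j \to L$ as well. If $L = +\infty$, then $A_j \geq B_j \to +\infty$. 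If $L = -\infty$, write $A_j = B_j\bigl(1 + t_j + t_j^2/3\bigr)$ with $t_j = B_j/c_j \in [-1, 0]$ eventually; a direct computation shows $1 + t + t^2/3$ lies in $[1/3, 1]$ for $t \in [-1, 0]$, so $A_j \leq B_j/3 \to -\infty$. In every case the subsequential limit of $A_j$ coincides with that of $B_j$, giving $\limsup A_j \leq \limsup B_j$.

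Combining the two inequalities yields $\limsup A_j = \limsup B_j$ along every exhaustion, and taking the supremum produces the asserted equality of \eqref{m_CV} and \eqref{m_CV2}. There is no real obstacle here: the argument is entirely algebraic once $c_j \to \infty$ is known, with just a small bookkeeping over the three possible limits of $B_j$ to handle the correction term $A_j - B_j$.
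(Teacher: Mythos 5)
Your proof is correct, and it takes a genuinely different route from the paper's. The paper first reduces to ``capacity-efficient'' exhaustions satisfying $\lim_j |K_j|/\capac(K_j)^3 = 4\pi/3$: the inequality $\limsup_j |K_j|/\capac(K_j)^3 \le 4\pi/3$ comes from the Euclidean isocapacitary inequality (Theorem \ref{thm_isocap}) together with asymptotic flatness, while the reverse is arranged by replacing any $K_j$ of excessive capacity with the coordinate ball of equal volume (a trick from \cite{JL1}); only then does it factor $v_j^3-c_j^3$ and conclude that the two limsups agree on such exhaustions. You instead work with the exact identity $A_j = B_j + \frac{B_j^2}{c_j}\bigl(1+\frac{B_j}{3c_j}\bigr)$, observe $A_j \ge B_j$ unconditionally (since $v_j\ge 0$ forces $B_j/c_j\ge -1$), and then prove the stronger statement that $\limsup_j A_j = \limsup_j B_j$ along \emph{every} exhaustion, using only $c_j\to\infty$ and a three-case analysis of the subsequential limits of $B_j$; the case $L=-\infty$ is handled correctly by the bound $1+t+t^2/3\in[1/3,1]$ on $[-1,0]$. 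What your approach buys is economy: no isocapacitary inequality, no ball-replacement construction, and no need to quantify how close $g$ is to $\delta$ far out --- the only geometric input is the (standard, and adequately sketched) divergence of $\capac(K_j)$, which follows from monotonicity of capacity under inclusion and uniform equivalence of $g$ with $\delta$ outside a compact set. What the paper's route buys is the side observation, useful elsewhere in the spirit of the $m_{iso}$ analysis, that optimal exhaustions necessarily satisfy $v_j/c_j\to 1$; your argument does not produce that reduction, but the lemma as stated does not require it. The only points worth tightening are cosmetic: for the first few $j$ one could have $\capac(K_j)=0$, so the identity should be read for $j$ large, and in the final step one should pass to a subsequence realizing $\limsup_j A_j$ and then extract a further subsequence on which $B_j$ converges in $[-\infty,\infty]$ --- which is exactly the order of quantifiers your case analysis supports.
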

\begin{proof}
We claim that in the definition \eqref{m_CV} of $m_{CV}$, one may restrict to exhaustions $\{K_j\}$ such that
\begin{equation}
\label{eqn_lim}
\lim_{j \to \infty} \frac{|K_j|}{\capac(K_j)^3} = \frac{4\pi}{3}.
\end{equation}
First, by the isocapacitary inequality (Theorem \ref{thm_isocap}), it can readily be shown for any exhaustion
$$\limsup_{j \to \infty} \frac{|K_j|}{\capac(K_j)^3} \leq \frac{4\pi}{3},$$
since $g$ becomes more and more uniformly equivalent to $\delta$ on $M \setminus K_j$ as $j \to \infty$ by asymptotic flatness. 

Second, for the reverse, we use a similar idea to \cite[Lemma 16]{JL1}.  Let $\{K_j\}$ be an exhaustion of $M$ by compact sets, so $|K_j| \to \infty$. Then for all $j$ sufficiently large, there exists a unique $r_j > 0$ so that the closed coordinate ball $B_{r_j}$ has the same volume as $K_j$. Define
$$K_j' = \begin{cases}
K_j, & \text{ if } \capac(K_j) \leq \capac (B_{r_j})\\
B_{r_j} & \text{ if } \capac(K_j) > \capac (B_{r_j}),
\end{cases}$$
and note $\{K_j'\}$ is an exhaustion of $M$. Then
\begin{align*}
\liminf_{j \to \infty} \frac{|K_j'|}{\capac(K_j')^3} &\geq \liminf_{j \to \infty} \frac{|B_{r_j}|}{\capac(B_{r_j})^3}.
\end{align*}
Since $r_j \to \infty$, the right-hand side equals $\frac{4\pi}{3}$ by asymptotic flatness. Moreover, $K_j'$ has the same volume as $K_j$ and less capacity, so that the sequence $\{K_j'\}$ is a better competitor for the supremum in \eqref{m_CV} than $\{K_j\}$. This shows the claim.

Now, let $\{K_j\}$ be an exhaustion of $M$ by compact sets such that \eqref{eqn_lim} holds. Let $v_j$ be the volume radius and $c_j$ the capacity of $K_j$. In particular, $\displaystyle\lim_{j \to \infty} v_j / c_j = 1$.

Then the term appearing in the definition of $m_{CV}$, i.e. \eqref{m_CV}, is
\begin{align*}
\frac{1}{4\pi\capac(K_j)^2} \left(|K_j|- \frac{4\pi}{3}\capac(K_j)^3  \right) &=\frac{1}{4\pi c_j^2} \left(\frac{4\pi}{3} v_j^3 - \frac{4\pi}{3}c_j^3\right)\\
&= \frac{1}{3c_j^2} (v_j - c_j) (v_j^2 + c_j v_j + c_j^2).
\end{align*}
Since $v_j / c_j$ limits to 1, we have
$$\limsup_{j \to \infty}\frac{1}{4\pi\capac(K_j)^2} \left(|K_j|- \frac{4\pi}{3}\capac(K_j)^3  \right)  = \limsup_{j \to \infty} (v_j - c_j).$$
In particular, ``$\leq$'' holds between \eqref{m_CV} and \eqref{m_CV2}, and the reverse inequality follows from a similar argument.

\end{proof}

Note that the capacity is well-defined for any compact set in a (non-compact) $C^0$ Riemannian manifold $(M,g)$ (i.e., where $M$ is a smooth $n$-manifold and $g$ is a symmetric, positive-definite, continuous covariant 2-tensor), since $\int |\nabla \phi|^2_g dV_g$ continues to be defined. 

\begin{lemma}
\label{lemma_end}
Suppose $(M,g)$ is an asymptotically flat 3-manifold. The value of $m_{CV}(M,g)$ only depends on the AF end. More generally and precisely, we have the following. Let $g'$ be any $C^0$ Riemannian metric on $M$ that is uniformly equivalent to $g$ (possibly $g=g'$).
If $A \supset \partial M$ is any bounded open subset of $M$ with smooth boundary, then $m_{CV}(M,g') = m_{CV}(M \setminus A, g'|_{M \setminus A})$.
\end{lemma}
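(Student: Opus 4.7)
The plan is to build a bijective correspondence between exhaustions of $(M, g')$ by compact sets containing $\partial M$ and exhaustions of $(M \setminus A, g'|_{M \setminus A})$ by compact sets containing $\partial A$, under which the capacity-volume deficit $v(K) - \capac(K)$ (with volume radius $v(K) := (3|K|/(4\pi))^{1/3}$) is preserved up to an error that vanishes as $j \to \infty$. Given any exhaustion $\{K_j\}$ of $M$, compactness of $\bar A$ together with $\bigcup_j K_j = M$ forces $K_j \supset \bar A$ for all sufficiently large $j$; since discarding finitely many terms does not affect the $\limsup$, we may assume this holds for all $j$. Set $L_j := \overline{K_j \setminus A}$. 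Since $\partial A \subset K_j \setminus A$, each $L_j$ is a compact subset of $M \setminus A$ containing $\partial A$, and $\{L_j\}$ exhausts $M \setminus A$; the inverse map is $\{L_j\} \mapsto \{L_j \cup \bar A\}$, and a short set-theoretic check (using $K_j \supset \bar A$) confirms bijectivity.

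Two identities then drive the argument. The volume identity $|L_j|_{g'} = |K_j|_{g'} - |A|_{g'}$ is immediate, and $|A|_{g'}$ is a finite constant because $A$ is bounded and $g'$ is continuous. The capacity identity is $\capac_{g'}(K_j) = \capac_{g'|_{M \setminus A}}(L_j)$, with capacities computed in $(M, g')$ and $(M \setminus A, g'|_{M \setminus A})$ respectively. For the inequality ``$\geq$'', restrict any admissible function $\psi$ for $\capac_{g'}(K_j)$ to $M \setminus A$; the restriction vanishes on $L_j \subset K_j$, still tends to $1$ at infinity, and has smaller or equal Dirichlet integral. For ``$\leq$'', extend any admissible $\tilde\psi$ for $\capac_{g'|_{M\setminus A}}(L_j)$ by zero on $\bar A$; since $\tilde\psi$ vanishes on $\partial A \subset L_j$ and $\partial A$ is smooth, the extension is locally Lipschitz on $M$, vanishes on $L_j \cup \bar A = K_j$, and has the same Dirichlet integral as $\tilde\psi$.

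To conclude, uniform equivalence of $g'$ with the asymptotically flat metric $g$ implies $(M, g')$ has infinite volume, so $|K_j|_{g'} \to \infty$ along any exhaustion, and hence
$$v(K_j) - v(L_j) = \left(\tfrac{3|K_j|_{g'}}{4\pi}\right)^{1/3} - \left(\tfrac{3(|K_j|_{g'} - |A|_{g'})}{4\pi}\right)^{1/3} \longrightarrow 0.$$
Combined with the equality of capacities, this yields $\limsup_j (v(K_j) - \capac(K_j)) = \limsup_j (v(L_j) - \capac(L_j))$, and taking suprema over exhaustions on each side gives the desired equality. I do not anticipate a genuine obstacle; the only mild subtlety is the locally Lipschitz regularity of the extension by zero across $\partial A$, which follows from the smoothness of $\partial A$ together with the vanishing of the admissible function there.
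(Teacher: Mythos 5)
Your proposal is correct and follows essentially the same route as the paper: remove $A$ from each member of the exhaustion, observe that capacities are unchanged while volumes shift by the constant $|A|_{g'}$, and note that the volume radii then differ by $O(|K_j|_{g'}^{-2/3}) \to 0$. The only difference is that you supply the restriction/extension-by-zero argument for the capacity identity (and correctly flag the Lipschitz issue at the smooth boundary $\partial A$), which the paper simply asserts.
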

\begin{proof}
Let $\{K_j\}$ be an exhaustion of $M$ by compact sets. By truncating finitely many terms, we may assume every $K_j$ contains $A$. 
Note that $\tilde K_j = K_j \setminus A$ forms an exhaustion of $M \setminus A$ by compact sets. Since $K_j$ contains $A$, the capacities of $\tilde K_j$ in $M \setminus A$ and of $K_j$ in $M$ are equal with respect to $g'$. On the other hand, their 
volumes differ by a fixed constant, namely $|A|_{g'}$. In particular,
$$|\tilde K_j|_{g'}^{1/3}=|K_j \setminus A|_{g'}^{1/3} = |K_j|_{g'}^{1/3} + O(|K_j|_{g'}^{-2/3}).$$
Since $|K_j|_{g'} \to \infty$ as $j \to \infty$ since $g'$ is uniformly equivalent to an AF metric, we have
$$\limsup_{j \to \infty} \left( \left(\frac{3|K_j|_{g'}}{4\pi}\right)^{1/3} - \capac_{g'}(K_j)  \right) = \limsup_{j \to \infty} \left( \left(\frac{3|\tilde K_j|_{g'}}{4\pi}\right)^{1/3} - \capac_{g'}(\tilde K_j)  \right),$$
which implies the claim $m_{CV}(M,g') \leq m_{CV}(M \setminus A, g'|_{M \setminus A})$. The reverse inequality follows similarly, beginning with an exhaustion of $M \setminus A$ and taking the union of each compact set with $A$.
\end{proof}

The following general smoothing result will be useful.
\begin{lemma}
\label{lemma_smooth}
Let $(M,g)$ be a non-compact $C^0$ Riemannian $n$-manifold.
\begin{enumerate}[(a)]
\item Given a compact set $K \subset M$ and $\epsilon >0$, there exists a compact set $K_\epsilon \supset$ K with smooth boundary such that $|K_{\epsilon} \setminus K|_g < \epsilon$ and $0 \leq \capac_g(K_\epsilon) - \capac_g(K) < \epsilon$.
\item In particular, in Definition \ref{def_mCV} of $m_{CV}$ (for $n=3$), one may without loss of generality consider compact exhaustions $\{K_j\}$ such that each $K_j$ has smooth boundary. 
\end{enumerate}
\end{lemma}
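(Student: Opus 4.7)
\smallskip
\textbf{Plan.} Part (b) follows from part (a) essentially for free: applying (a) with $\epsilon_j = 1/j$ to each term of a given exhaustion $\{K_j\}$ produces smooth-boundary supersets $\tilde K_j \supset K_j$ whose volumes and capacities agree with those of $K_j$ up to $O(1/j)$ errors. Since $|K_j|_g \to \infty$ by asymptotic flatness, this error is absorbed in the $\limsup$ in \eqref{m_CV2}, and the family $\{\tilde K_j\}$ still exhausts $M$ (as $\tilde K_j \supset K_j$). So I would focus on part (a).

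For (a), the plan is a two-step approximation. First, I would produce an open neighborhood $U_s := \{x : d(x,K) < s\}$ of $K$ (distance with respect to some smooth background metric) on which both $|U_s \setminus K|_g$ and $\capac_g(\overline{U_s}) - \capac_g(K)$ are small for small $s$. The volume estimate is immediate from monotone convergence, using $\bigcap_{s > 0} U_s = K$. For the capacity, I would take a near-minimizer $\psi$ in the definition \eqref{eqn_cap}---locally Lipschitz, vanishing on $K$, tending to $1$ at infinity, with energy within $\epsilon/3$ of $4\pi\,\capac_g(K)$---which, after truncating into $[0,1]$, can be arranged to be globally Lipschitz with some constant $L$ on a fixed neighborhood of $K$. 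Then the truncation $(\psi - Ls)_+ / (1 - Ls)$ vanishes on $\overline{U_s}$ (since $\psi \leq Ls$ there) and still approaches $1$ at infinity, and its Dirichlet energy is at most $(1-Ls)^{-2}$ times that of $\psi$; letting $s \to 0^+$ yields $\capac_g(\overline{U_s}) \to \capac_g(K)$.

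Second, inside $U_s$ I would interpolate between $K$ and $\overline{U_s}$ by a smooth-boundary compact set. A standard bump function $f : M \to [0,1]$ that equals $1$ on a neighborhood of $K$ and vanishes off $U_s$, combined with Sard's theorem to select a regular value $c \in (0,1)$, produces $K_\epsilon := \{f \geq c\}$---a compact set with smooth boundary $\{f = c\}$, sandwiched between $K$ and $\overline{U_s}$. Monotonicity of capacity under set inclusion (if $A \subset B$ then an admissible function for $B$ is admissible for $A$) then gives
\[
\capac_g(K) \;\leq\; \capac_g(K_\epsilon) \;\leq\; \capac_g(\overline{U_s}) \;<\; \capac_g(K) + \epsilon,
\]
while the volume bound $|K_\epsilon \setminus K|_g \leq |U_s \setminus K|_g < \epsilon$ is immediate from $K_\epsilon \subset U_s$.

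The only genuinely nontrivial step is the outer-regularity statement $\capac_g(\overline{U_s}) \to \capac_g(K)$ as $s \to 0^+$, and even this reduces to the truncation argument above. Its one subtlety is ensuring that the near-minimizer $\psi$ can be taken globally Lipschitz with a controlled constant on a fixed neighborhood of $K$; once that is in hand, the remaining bump-function and Sard steps are routine.
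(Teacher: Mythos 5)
Your proof is correct, and it takes a somewhat different route from the paper's. The paper builds the approximating sets as sublevel sets $f^{-1}[0,\delta]$ of a Whitney function $f\geq 0$ with $f^{-1}(0)=K$, so a single application of Sard's theorem produces both the neighborhood and the smooth boundary; it then \emph{cites} the continuity of capacity along decreasing compact sets (Evans--Gariepy), obtains the volume statement from the coarea formula plus dominated convergence, and handles the $C^0$ case by uniformly approximating $g$ by smooth metrics. You instead separate the two steps (metric neighborhoods $U_s$, then a bump function plus Sard to interpolate a smooth-boundary set between $K$ and $\overline{U_s}$), and --- this is the substantive difference --- you \emph{prove} the outer regularity $\capac_g(\overline{U_s})\to\capac_g(K)$ directly by truncating a near-minimizer: since a locally Lipschitz $\psi$ vanishing on $K$ is $L$-Lipschitz on the compact set $\overline{U_{s_0}}$, one has $\psi\leq Ls$ on $\overline{U_s}$, so $(\psi-Ls)_+/(1-Ls)$ is admissible for $\overline{U_s}$ with energy inflated by only $(1-Ls)^{-2}$; a two-parameter limit then gives the claim. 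This is essentially the standard proof behind the cited fact, but making it explicit buys you something: the argument uses only the $C^0$ Dirichlet energy and monotonicity, so it applies verbatim to continuous metrics and you can dispense with the paper's metric-smoothing step. Your treatment of the volume (continuity from above of the measure on $\bigcap_s U_s = K$, valid since $|U_{s_0}|_g<\infty$) is also simpler than the coarea argument. Part (b) is handled identically in both proofs. The only points worth tightening in a write-up are (i) the justification that locally Lipschitz implies Lipschitz on compact sets (a short compactness argument), and (ii) the bookkeeping that the truncated function still tends uniformly to $1$ at infinity; neither is a gap.
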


\begin{proof}
First, let $K$ be a compact set in a smooth, non-compact, Riemannian $n$-manifold $(M,g)$. By a well-known theorem of Whitney, there exists a smooth function $f \geq 0$ on $M$ with $f^{-1}(0) = K$. Since $K$ is compact, we may without loss of generality assume $f^{-1}[0,1]$ is compact.

For $\delta \in (0,1)$, $K_\delta=f^{-1}[0,\delta] \supset K$ is compact. It is well known (see \cite[Theorem 4.15]{EG}, for example) that
$$\lim_{\delta \to 0} \capac(K_\delta) = \capac\Bigg(\bigcap_{\delta \in (0,1)} K_\delta\Bigg) = \capac(K).$$
Thus, we can make $0 \leq \capac(K_\delta)- \capac(K)$ as small as desired.
By the co-area formula,
$$|K_\delta| - |K| = \int_0^\delta \int_{f^{-1}(t)} \frac{1}{|\nabla f|} dA dt,$$
so in particular the outer integral is finite for each $\delta$. Then by the dominated convergence theorem, we have $|K_\delta| \to |K|$ as $\delta \to 0$, i.e. we can make $|K_\delta \setminus K|$ as small as desired.

If $g$ is only $C^0$, then it can be approximated by smooth metrics that are arbitrarily uniformly close, so the same properties hold, since volume and capacity are continuous under uniform perturbations of the metric.

Finally, by Sard's theorem $\partial K_\delta = f^{-1}(\delta)$ is smooth for almost all $\delta>0$; with the above, this implies (a).

By applying (a) to each member of an exhaustion of $M$, and letting $\epsilon \to 0$ as $j \to \infty$, (b) follows.
\end{proof}

We conclude this section by demonstrating the plausibility of $m_{CV}$ equaling $m_{ADM}$ in general by explaining why it is true in Schwarzschild manifolds of positive mass. This is a direct argument; the claim also will follow from Corollary \ref{cor_main}.

\begin{prop}
\label{prop_schwarz}
For $m>0$, let
$$g_{ij}=\left(1 + \frac{m}{2r}\right)^4 \delta_{ij}$$
on $\R^3$ minus the open ball of radius $\frac{m}{2}$ about the origin (which we denote by $M$), i.e. $(M,g)$ is the Schwarzschild metric of mass $m$. Then $m_{CV}(M,g) = m$.
\end{prop}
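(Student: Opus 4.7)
The plan is to prove Proposition \ref{prop_schwarz} by directly establishing both $m_{CV}(M,g) \geq m$ and $m_{CV}(M,g) \leq m$ through an explicit spherically symmetric computation for the lower bound, and an appeal to the Schwarzschild isocapacitary inequality for the upper bound.

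For the lower bound, I would evaluate the quantity in \eqref{m_CV2} on the natural exhaustion by coordinate balls $B_R := \{x \in M : |x| \leq R\}$ as $R \to \infty$. The capacitary potential of $B_R$ can be solved in closed form using the conformal flatness $g = u^4 \delta$ with $u(r) = 1 + m/(2r)$: since $u$ is flat-harmonic on $M$, the conformal relation $\Delta_g h = u^{-5}\Delta_\delta(u h)$ shows that $\varphi$ is $g$-harmonic on $\{r > R\}$ with the required boundary values if and only if $u\varphi$ is $\delta$-harmonic with the same boundary values; solving in the radial class gives $u\varphi = 1 - R/r$ and hence $\varphi(r) = (r-R)/(r + m/2)$. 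A flux computation through a coordinate sphere, using the $g$-area element $4\pi r^2 u^4$ and the $g$-normal derivative $u^{-2}\partial_r \varphi$, telescopes the $u$-factors and yields $\capac_g(B_R) = R + m/2$. Expanding $(1 + m/(2r))^6 = 1 + 3m/r + O(r^{-2})$ in the volume integral gives $|B_R|_g = \tfrac{4\pi}{3} R^3 + 6\pi m R^2 + O(R)$, and hence $\left(\tfrac{3|B_R|_g}{4\pi}\right)^{1/3} = R + \tfrac{3m}{2} + O(1/R)$ by the binomial theorem. Subtracting the capacity shows $\left(\tfrac{3|B_R|_g}{4\pi}\right)^{1/3} - \capac_g(B_R) \to m$, so $m_{CV}(M,g) \geq m$.

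For the upper bound, I would invoke the isocapacitary inequality for Schwarzschild manifolds of positive mass of Mantoulidis--Miao--Tam \cite{MMT}: for any compact set $K \supseteq \partial M$, one has $\capac_g(K) \geq \capac_g(B_R)$, where $B_R$ is the coordinate ball with the same $g$-volume as $K$. Given any exhaustion $\{K_j\}$ of $M$, choose such volume-matching balls $B_{R_j}$; then $R_j \to \infty$ and by the MMT inequality $\left(\tfrac{3|K_j|_g}{4\pi}\right)^{1/3} - \capac_g(K_j) \leq \left(\tfrac{3|B_{R_j}|_g}{4\pi}\right)^{1/3} - \capac_g(B_{R_j})$, whose $\limsup$ is $m$ by the lower-bound computation. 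Taking the supremum over exhaustions gives $m_{CV}(M,g) \leq m$.

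The main obstacle is the upper bound, which rests on the nontrivial Schwarzschild isocapacitary inequality of \cite{MMT}. A fully self-contained proof could in principle proceed by spherical symmetrization within the Schwarzschild metric itself, in the spirit of the classical Euclidean Poincar\'e--Faber--Szeg\"o proof, but the non-flat rotationally symmetric background geometry makes this delicate, and invoking \cite{MMT} is the cleanest route. An alternative would be to derive the upper bound from Corollary \ref{cor_main} since Schwarzschild is harmonically flat at infinity; however, the stated goal of the proposition is to give a direct demonstration, so the MMT route is preferred.
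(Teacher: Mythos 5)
Your proposal is correct and follows essentially the same route as the paper: the lower bound via the explicit capacitary potential $\varphi = \bigl(1 - \tfrac{R}{|x|}\bigr)/\bigl(1 + \tfrac{m}{2|x|}\bigr)$ and the volume expansion for coordinate balls (giving capacity $R + \tfrac{m}{2}$ and volume radius $R + \tfrac{3m}{2} + O(R^{-1})$), and the upper bound via the Mantoulidis--Miao--Tam isocapacitary inequality for positive-mass Schwarzschild manifolds \cite[Theorem 1.10]{MMT}. The paper's proof is simply a terser version of the same computation.
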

\begin{proof}
Let $B_{r}$ be the set of points with Euclidean radius between $\frac{m}{2}$ and $r> \frac{m}{2}$, endpoints included. Short calculations show
\begin{align*}
\left(\frac{3|B_r|}{4\pi}\right)^{1/3} &= r + \frac{3m}{2} + O(r^{-1})\\
\capac(B_{r}) &= r + \frac{m}{2}.
\end{align*}
(The latter can be shown by observing $\varphi(x) = \frac{1-\frac{r}{|x|}}{1+\frac{m}{2|x|}}$ is the capacitary potential of $B_r$.)
Then
$$\left(\frac{3|B_r|}{4\pi}\right)^{1/3} - \capac(B_r) = m + O(r^{-1}),$$
This implies $m_{CV} \geq m_{ADM}$ for Schwarzschild space. For the reverse inequality, Mantoulidis, Miao, and Tam recently showed that for regions in the Schwarzschild manifold of positive mass that enclose the horizon with a given volume, those bounded by standard coordinate spheres have the least capacity \cite[Theorem 1.10]{MMT}. (This theorem relies on Bray's result that such regions are isoperimetric \cite{bray_thesis}.) This implies $m_{CV} \leq m_{ADM}$.
\end{proof}

\section{Lower bound on capacity-volume mass}
\label{sec_lb}
In this section we prove Proposition \ref{prop_lower} below, which immediately implies Theorem \ref{thm_lb}. To prove this lower bound, we use an exhaustion of $M$ by coordinate balls.

Let $(M,g)$ be an AF 3-manifold of order $\tau > \frac{1}{2}$. Fix an AF coordinate chart $(x^1, x^2, x^3)$ covering $M \setminus K$, and let $\sigma_{ij}$ be as in Definition \ref{def_AF}.  We fix some notation that will be used in the statement and proof.
\begin{itemize}
\item $S_r$ is the coordinate sphere $|x|=r$.
\item $B_r$ is the compact region enclosed by $S_r$.
\item $A(r)$ is the $g$-area of $S_r$.
\item $V(r)$ is the $g$-volume of $B_r$.
\item $H_r$ is the mean curvature of $S_r$ with respect to $g$ (with the sign convention so that $H_r>0$ for large $r$).
\end{itemize}

\begin{prop}
\label{prop_lower}
Let $(M,g)$ be an asymptotically flat 3-manifold whose scalar curvature is nonnegative outside of a compact set. Then
$$\lim_{r \to \infty}\left[ \left(\frac{3V(r)}{4\pi}\right)^{1/3} - \capac(B_r)\right] \geq m_{ADM}(M,g).$$
In particular, $m_{CV}(M,g) \geq m_{ADM}(M,g).$
\end{prop}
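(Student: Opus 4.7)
The strategy follows the two ingredients highlighted in the outline: the asymptotic expansions of Fan--Shi--Tam \cite{FST} and the capacity upper bound of Bray--Miao \cite{BM}. Since the coordinate-ball exhaustion is specified in the statement, it suffices to establish matching expansions of the volume radius and the capacity up to $o(1)$ error, with leading-order constants as in the Schwarzschild computation of Proposition \ref{prop_schwarz}; the secondary claim $m_{CV}\geq m_{ADM}$ then follows immediately from Definition \ref{def_mCV} together with Lemma \ref{lemma_mcv}.

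For the volume radius, the decay assumptions and the integrability of the scalar curvature let one invoke the Fan--Shi--Tam expansion, which pins down the $r^2$-coefficient of $V(r)$ in terms of $m_{ADM}$ modulo $o(r^2)$. Taking a cube root then yields
\begin{equation*}
\left(\frac{3V(r)}{4\pi}\right)^{1/3} \;=\; r + \frac{3\,m_{ADM}}{2} + o(1),
\end{equation*}
matching the Schwarzschild value computed in Proposition \ref{prop_schwarz}.

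For the capacity, I would apply the Bray--Miao inequality to the AF manifold-with-boundary $N_r := M \setminus \interior(B_r)$, choosing $r$ large enough that the scalar curvature is nonnegative throughout $N_r$ and that $S_r$ is mean-convex and outer-minimizing. Any non-minimal portion of $\partial M$ and any region of negative scalar curvature lie in a fixed compact set, hence inside $B_r$ for such $r$, so the Bray--Miao hypotheses are met on $N_r$. Their inequality bounds $m_{ADM}(N_r,g)=m_{ADM}(M,g)$ from below by an expression involving $\capac_{N_r}(S_r)=\capac_g(B_r)$ together with Willmore-type integrals on $S_r$. Applying Fan--Shi--Tam asymptotics for $|S_r|$, $\int_{S_r} H\,dA$, and $\int_{S_r} H^2\,dA$, these boundary integrals should combine to contribute an additive $\tfrac{1}{2}m_{ADM}$ shift plus $o(1)$ error, giving
\begin{equation*}
\capac(B_r) \;\leq\; r + \frac{m_{ADM}}{2} + o(1).
\end{equation*}
Subtracting the two estimates delivers the displayed inequality of the proposition.

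The main obstacle is the asymptotic bookkeeping in the Bray--Miao step: one must verify that the various curvature integrals on $S_r$ combine to yield exactly $\tfrac{1}{2}m_{ADM}$ and that all error terms are genuinely $o(1)$ rather than merely $O(1)$. Verifying that $S_r$ is outer-minimizing in $N_r$ for all sufficiently large $r$---needed to invoke Bray--Miao---is a further technical point but is expected to follow from the near-Euclidean geometry of large coordinate spheres. The remaining reductions---the specialization to coordinate balls and the passage to $m_{CV}\geq m_{ADM}$---are routine.
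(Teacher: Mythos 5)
Your overall architecture --- Fan--Shi--Tam expansions for the volume radius plus the Bray--Miao capacity bound applied to $M \setminus \interior(B_r)$, then subtract --- is exactly the paper's strategy. But there is a genuine gap in the two intermediate expansions you assert. For a general AF metric of order $\tau \in (\tfrac12, 1)$ it is \emph{not} true that $\left(\frac{3V(r)}{4\pi}\right)^{1/3} = r + \frac{3m}{2} + o(1)$, nor that the Bray--Miao bound reduces to $\capac(B_r) \leq r + \frac{m}{2} + o(1)$. The Fan--Shi--Tam area expansion is $A(r) = 4\pi r^2 + 4\pi\beta(r) + O(r^{2-2\tau})$ with $\beta(r) = \frac{1}{8\pi}\int_{S_r} h^{ij}\sigma_{ij}\,dA_r = O(r^{2-\tau})$, and this feeds a term $\frac{\beta(r)}{2r} = O(r^{1-\tau})$ into \emph{both} quantities: one finds
$$\left(\frac{3V(r)}{4\pi}\right)^{1/3} = r + \frac{m}{2} + \frac{\beta(r)}{2r} + o(1), \qquad \capac(B_r) \leq r - \frac{m}{2} + \frac{\beta(r)}{2r} + o(1).$$
The term $\frac{\beta(r)}{2r}$ need not converge at all, and even when it does its limit need not be $m$; the constants $\frac{3m}{2}$ and $\frac{m}{2}$ you wrote are the Schwarzschild (more generally, harmonically flat) values, where $\frac{\beta(r)}{2r} = m + O(r^{-1})$, and only those. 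So each of your two displayed estimates is false in general, and neither half of the ``asymptotic bookkeeping'' you defer can be carried out as a standalone $o(1)$ statement.

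The proof closes only because $\frac{\beta(r)}{2r}$ enters the two expansions with the same coefficient and cancels in the difference, leaving $\left(\frac{3V(r)}{4\pi}\right)^{1/3} - \capac(B_r) \geq m + o(1)$; exhibiting this cancellation is the crux of the argument and cannot be bypassed by claiming the two separate limits. Two smaller points: the Bray--Miao theorem as used here is an upper bound on capacity purely in terms of $|S_r|$ and $\int_{S_r}H^2\,dA$ (the ADM mass enters afterwards, via the Fan--Shi--Tam asymptotics of $\int_{S_r}H\,dA$ and of the area), and its hypotheses do not include $S_r$ being outer-minimizing, so that verification is unnecessary here.
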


\begin{proof}
We must quantify the asymptotic behavior of $V(r)$ and of $\capac(B_r)$. 

Let $m$ denote the ADM mass of $(M,g)$. By \cite[eq. (2.28)]{FST}, 
$$V(r) = \frac{1}{2}r A(r) - \frac{2\pi r^3}{3} + 2\pi m r^2 + o(r^2)$$
as $r \to \infty$. 
Also, by \cite[Lemma 2.1]{FST}
\begin{equation}
\label{eqn_Ar}
A(r) = 4\pi r^2 + 4\pi \beta(r) + O(r^{2-2\tau}),
\end{equation}
where $\beta(r)$ is $O(r^{2-\tau})$, given explicitly by
\begin{equation}
\label{eqn_beta}
\beta(r) = \frac{1}{8\pi} \int_{S_r} h^{ij} \sigma_{ij} dA_r,
\end{equation}
where $dA_r$ is the area form on $S_r$ with respect to $g$ and $h^{ij}$ is $h_{ij}$ raised with respect to $g^{ij}$, with $h_{ij}$ being the induced metric on $S_r$ expressed in the coordinate chart.
Combining these,
\begin{align*}
V(r) &= \frac{1}{2}r \left(4\pi r^2 + 4\pi \beta(r) + O(r^{2-2\tau})\right)  - \frac{2\pi r^3}{3} + 2\pi m r^2 + o(r^2)\\
&= \frac{4}{3}\pi r^3 + 2\pi m r^2 + 2\pi r\beta(r) + o(r^2).
\end{align*}
Then
\begin{align}
\left(\frac{3V(r)}{4\pi}\right)^{1/3} &= r\left(1 + \frac{m}{2r} + \frac{\beta(r)}{2r^2} + o(r^{-1})\right) \nonumber \\
&= r + \frac{m}{2} +  \frac{\beta(r)}{2r} + o(1). \label{eqn_vol_estimate}
\end{align}
Note the $\beta(r)r^{-1}$ term is potentially problematic, as a priori it can be larger than the relevant constant term $\frac{m}{2}$. 

\begin{remark}
\label{rmk_beta} In a Schwarzschild (or more generally, harmonically flat) manifold, $\frac{\beta(r)}{2r} $ is equal to $m + O(r^{-1})$.
\end{remark}

Next we turn our attention to $\capac(B_r)$.  We will use the upper bound on capacity due to Bray and Miao \cite{BM} in terms of the area and Willmore energy of the boundary (generalizing the case of a minimal boundary due to Bray \cite{Bray_RPI}).
\begin{thm}[Bray--Miao, Theorem 1 of \cite{BM} ]
\label{thm_bray_miao}
Let $(M,g)$ be an asymptotically flat 3-manifold with nonnegative scalar curvature with connected, nonempty boundary $\partial M$.
  Assume $M$ is diffeomorphic to the complement of a bounded domain in $\R^3$. Then
$$\capac(\partial M) \leq \sqrt{\frac{|\partial M|}{16\pi}} \left( 1 + \sqrt{\frac{1}{16\pi} \int_{\partial M} H^2 dA}\;\right),$$
where $H$ and $dA$ are the mean curvature and area form of $\partial M$. 
If equality holds, then $(M,g)$ is isometric to a rotationally symmetric subset of a Schwarzschild manifold. 
\end{thm}

Note that by the hypotheses of Proposition \ref{prop_lower}, for all $r$ sufficiently large we may apply  Theorem \ref{thm_bray_miao} to $M \setminus B_r$, with the restriction of $g$ thereto. Clearly, the capacity of the boundary of this manifold equals $\capac(B_r)$ in $M$. Then:
\begin{equation}
\label{bray_miao_ineq}
\capac(B_r) \leq \sqrt{\frac{A(r)}{16\pi}} \left(1 + \sqrt{\frac{1}{16\pi} \int_{S_r} H_r^2 dA_r}\;\right).
\end{equation}
We therefore are interested in the asymptotics of the expression on the right. With an estimate of $A(r)$ already in hand, we study the Willmore energy term.

From \cite[Lemma 2.1]{FST} we have $H_r = \frac{2}{r} + \alpha(r),$ where $\alpha(r)$ is $O(r^{-1-\tau})$. Then
$$\int_{S_r} H_r^2 dA_r = \frac{4A(r)}{r^2} + \frac{4}{r} \int_{S_r} \alpha(r) dA_r + \int_{S_r} \alpha(r)^2 dA_r.$$
On the other hand,
\begin{align*}
\frac{1}{A(r)} \left( \int_{S_r} H_r dA_r\right)^2 &= \frac{1}{A(r)} \left(\frac{2}{r} A(r) +  \int_{S_r} \alpha(r) dA_r\right)^2\\
&=\frac{4A(r)}{r^2}  + \frac{4}{r} \int_{S_r} \alpha(r) dA_r + \frac{1}{A(r)}\left(\int_{S_r} \alpha(r) dA_r\right)^2.
\end{align*}
Thus,
$$\int_{S_r} H_r^2 dA_r - \frac{1}{A(r)} \left( \int_{S_r} H_r dA_r\right)^2 = O(r^{-2\tau}).$$
By \cite[Lemma 2.2]{FST},
$$\int_{S_r} H_r dA_r = \frac{A(r)}{r} + 4\pi r -8\pi m +o(1),$$
so
$$\left(\int_{S_r} H_r dA_r\right)^2 = \frac{A(r)^2}{r^2} + 8 \pi A(r) + 16 \pi^2 r^2 - \frac{16\pi m A(r)}{r} -64\pi^2 mr + o(r),$$
and
$$\frac{1}{A(r)} \left(\int_{S_r} H_r dA_r\right)^2 = \frac{A(r)}{r^2} + 8 \pi  + \frac{16 \pi^2 r^2}{A(r)} - \frac{16\pi m}{r} -\frac{64\pi^2 mr}{A(r)} + o(r^{-1}).$$
Then
$$\frac{1}{16\pi} \int_{S_r} H_r^2 dA_r = \frac{A(r)}{16\pi r^2} + \frac{1}{2}  + \frac{\pi r^2}{A(r)} - \frac{m}{r} -\frac{4\pi mr}{A(r)} + o(r^{-1}),$$
since $2\tau > 1$. We combine this with the expansion \eqref{eqn_Ar} for $A(r)$ to obtain
$$\sqrt{\frac{1}{16\pi} \int_{S_r} H_r^2 dA_r} = 1 - \frac{m}{r} + o(r^{-1}).$$
Finally,
\begin{align}
\sqrt{\frac{A(r)}{16\pi}} \left(1 + \sqrt{\frac{1}{16\pi} \int_{S_r} H_r^2 dA_r}\right) &= \sqrt{\frac{r^2}{4} + \frac{\beta(r)}{4} + O(r^{2-2\tau})}\left( 2 - \frac{m}{r} + o(r^{-1})\right) \nonumber\\
&= \frac{r}{2} \sqrt{1 + \frac{\beta(r)}{r^2} + O(r^{-2\tau})}\left( 2 - \frac{m}{r} + o(r^{-1})\right) \nonumber\\
&= \frac{r}{2} \left(1 + \frac{\beta(r)}{2r^2} + O(r^{-2\tau})\right)\left( 2 - \frac{m}{r} + o(r^{-1})\right) \nonumber\\
&= r +\frac{\beta(r)}{2r}- \frac{m}{2} + o(1). \label{eqn_ah}
\end{align}
Combining \eqref{eqn_vol_estimate} with \eqref{bray_miao_ineq} and \eqref{eqn_ah}, Proposition \ref{prop_lower} follows, noting the convenient cancellation of the $\beta(r)r^{-1}$ terms.
\end{proof}

It would be interesting to determine whether nonnegative scalar curvature outside of a compact set is a necessary hypothesis for the inequality $m_{CV} \geq m_{ADM}$. (We suspect it is not.) For comparison, the inequality $m_{iso} \geq m_{ADM}$ does not require a sign on the scalar curvature \cite{FST}. (Note: the definition of isoperimetric mass in \cite{FST} is different, as it assumes the exhaustions are balls. The proof therein, due to Miao, implies $m_{iso} \geq m_{ADM}$ for the definition we use here.)

\section{Upper bound on $m_{CV}$ for exhaustions by balls}
\label{sec_ub}
In this section we prove Theorem \ref{thm_ub}, i.e. when restricting to exhaustions by balls, the ADM mass is an upper bound for the capacity-volume mass.

One key ingredient in the proof will be the following estimate for the (quasi-local) isoperimetric mass, due to the author and Lee, presented here in a slightly simplified form. (We also point out the related result \cite[Corollary C.4]{CESY}.) Recall the isoperimetric ratio of a bounded open set $\Omega$ in a Riemannian 3-manifold  is $\frac{|\partial^* \Omega|^{3/2}}{6 \sqrt{\pi}|\Omega|}$, and the isoperimetric constant of the manifold is the infimum of the isoperimetric ratios of all such sets.

\begin{thm}[Theorem 17 of \cite{JL1}]
\label{thm_JL}
Given positive constants $\mu_0, I_0,$ and $c_0$, there exists a constant $C$ depending only on $\mu_0, I_0,$ and $c_0$ with the following property. Let $(M,g)$ be a smooth asymptotically flat 3-manifold  with  nonnegative scalar curvature and no compact minimal surfaces in its interior, whose boundary is empty or minimal, with $m_{ADM}(M,g) \leq \mu_0$. Let $\Omega$ be an outward-minimizing bounded open set in $M$ with $C^{1,1}$ boundary $\partial \Omega$, such that $\Omega \supset \partial M$. Assume that $|\partial \Omega| \geq 36\pi \mu_0^2$, that the isoperimetric ratio of $\Omega$ is at most $I_0$, and that the isoperimetric constant of $(M,g)$ is at least $c_0$. 
Then 
\begin{equation}
\label{eqn_JL}
\frac{2}{|\partial \Omega|} \left[|\Omega| - \frac{1}{6\sqrt{\pi}} |\partial \Omega|^{3/2}\right] \leq m_{ADM}(M,g) + \frac{C}{\sqrt{|\partial \Omega|}}.
\end{equation}
\end{thm}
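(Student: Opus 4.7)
The plan is to prove the inequality by running the weak inverse mean curvature flow (IMCF) of Huisken--Ilmanen \cite{HI} starting from $\partial\Omega$. The outward-minimizing hypothesis is precisely what is needed so that there is no initial jump: the flow $\{\Sigma_t\}_{t\geq 0}$ then satisfies $\Sigma_0 = \partial\Omega$, $|\Sigma_t| = e^t|\partial\Omega|$, and the enclosed regions $\Omega_t$ eventually exhaust the AF end. Under the nonnegative scalar curvature hypothesis (plus $\partial M$ empty or minimal, ruling out premature termination), the Hawking mass $m_H(\Sigma_t)$ is non-decreasing along the flow and $\lim_{t\to\infty} m_H(\Sigma_t) \leq m_{ADM}$.

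\textbf{The generalized monotone quantity.} Set $F(t) := 2|\Omega_t| - \frac{1}{3\sqrt\pi}|\Sigma_t|^{3/2}$, so the target inequality reads $F(0)/|\partial\Omega| \leq m_{ADM} + C/\sqrt{|\partial\Omega|}$. The technical core of the argument is to exhibit a quantity of the form
\[
J(t) = \frac{F(t)}{|\Sigma_t|} - (\text{nonnegative correction in } m_H(\Sigma_t), |\Sigma_t|^{-1/2} \text{ vanishing as }t\to\infty)
\]
that is monotone non-decreasing along weak IMCF and satisfies $\lim_{t\to\infty} J(t) = m_{ADM}$; this extends Huisken's monotone quantity for families of CMC spheres to the general outward-minimizing setting, essentially as developed in \cite{JL1}. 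On smooth pieces of the flow, the differential inequality for $J$ should follow by combining $\frac{d}{dt}|\Omega_t| = \int_{\Sigma_t} H^{-1}\,dA$, the chained Cauchy--Schwarz estimate $\int H^{-1} \geq |\Sigma_t|^{3/2}/\sqrt{\int H^2}$, and the definition of $m_H$ that expresses $\int H^2$ in terms of $m_H$ and $|\Sigma_t|$; Taylor-expanding in $m_H\sqrt{16\pi/|\Sigma_t|}$ (assumed small) yields the pointwise $F'(t) \geq m_H(\Sigma_t)|\Sigma_t| + O(\sqrt{|\Sigma_t|})$ that motivates the form of the correction term. Jump times of weak IMCF are handled via the Huisken--Ilmanen jump formulas: at a jump, $\Sigma_t$ is replaced by the outward-minimizing envelope, which preserves area while possibly lowering $\int H^2$ (equivalently, raising $m_H$), compatible with the non-decrease of $J$.

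\textbf{Passing to the limit, quantitative error, and main obstacle.} To compute $\lim_{t\to\infty} J(t)$, I would use that the $\Sigma_t$ approach quantitatively round large coordinate spheres in the AF end under the stated geometric hypotheses, and appeal to the asymptotic expansions of $|B_r|, |S_r|, \int H^2$ from \cite{FST} already employed in Section~\ref{sec_lb}; this identifies the limit with $m_{ADM}$ at rate $|\partial\Omega|^{-1/2}$, producing the claimed bound with constant $C = C(\mu_0, I_0, c_0)$. The three hypotheses play the following roles. The mass bound $\mu_0$ together with the area lower bound $|\partial\Omega|\geq 36\pi\mu_0^2$ ensures $|m_H(\Sigma_t)|\sqrt{16\pi/|\Sigma_t|}$ is bounded well below $1$ for all $t \geq 0$, so the Taylor expansions used in the monotone-quantity argument are uniformly valid from the initial time. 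The isoperimetric ratio bound $I_0$ provides a priori control of $|\Omega|/|\partial\Omega|^{3/2}$, relevant to pinning down $C$ at $t=0$. The isoperimetric constant bound $c_0$ ensures that weak IMCF is globally well-defined and its flow surfaces become quantitatively round at infinity at a rate depending only on $c_0$ (ruling out long thin necks or other degenerations). The hardest step, and the true technical content of \cite{JL1}, is the construction and verification of monotonicity of $J$ both on smooth pieces and across jumps; once that is in hand, the limit computation and the extraction of the universal constant $C$ are relatively standard asymptotic analysis.
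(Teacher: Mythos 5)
This theorem is not proved in the paper at all: it is quoted verbatim as Theorem 17 of \cite{JL1}, and the only original content here is Lemma \ref{lemma_JL} in the appendix, which extracts the explicit constant $C=\gamma_0\mu_0^2(1+I_0^2c_0^{-3})$ by combining two ingredients of the external proof, namely the volume comparison $|\Omega|\leq \phi_m(|\partial\Omega|)+c_0^{-3}(36\pi\mu_0^2)^{3/2}I_0^2$ (eqn.\ (13) of \cite{JL1}) and the quantitative Schwarzschild estimate $\frac{2}{A}\bigl(\phi_m(A)-\frac{A^{3/2}}{6\sqrt{\pi}}\bigr)\leq m+\gamma m^2 A^{-1/2}$ (from \cite{JL1}, Lemma 10). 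Your sketch correctly identifies the broad strategy of the cited proof --- weak IMCF started at $\partial\Omega$ with no initial jump thanks to outward-minimizing, Geroch monotonicity of the Hawking mass, a generalized Huisken monotone quantity, and the Cauchy--Schwarz step $\int H^{-1}\geq |\Sigma_t|^{3/2}(\int H^2)^{-1/2}$ together with the Hawking mass identity; your observation that $|\partial\Omega|\geq 36\pi\mu_0^2$ is exactly what keeps $m_H\sqrt{16\pi/|\Sigma_t|}$ bounded away from $1$ is also on target.

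However, as a proof the proposal has two genuine gaps. First, the monotone quantity $J$ is never written down: you describe its desired shape and then state that its construction and the verification of monotonicity (on smooth pieces and across jumps) is ``the true technical content of \cite{JL1}.'' That is the entire theorem; deferring it leaves nothing proved. The naive candidate $F(t)/|\Sigma_t|$ is not monotone --- the pointwise estimate $F'(t)\gtrsim m_H(\Sigma_t)|\Sigma_t|$ you derive gives $\frac{d}{dt}(F/|\Sigma_t|)\geq m_H-F/|\Sigma_t|$, which does not close without the correction term you leave unspecified. Second, your endgame misplaces where the hypotheses act. Weak IMCF exists globally and its surfaces become asymptotically round in any AF manifold by Huisken--Ilmanen, with no isoperimetric-constant hypothesis; and there is no uniform quantitative roundness rate ``depending only on $c_0$'' available to convert the \cite{FST} expansions into $\lim_{t\to\infty}J(t)=m_{ADM}$ with error $O(|\partial\Omega|^{-1/2})$ uniform over all admissible $\Omega$. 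The actual mechanism, visible in the appendix, is different: one proves the closed-form volume comparison $|\Omega|\leq\phi_m(|\partial\Omega|)+(\text{error})$, where $I_0$ and $c_0$ control an \emph{additive volume error} of size $c_0^{-3}(36\pi\mu_0^2)^{3/2}I_0^2$ (independent of $|\partial\Omega|$), and the main term is then handled by the elementary Schwarzschild profile estimate --- no asymptotic expansion of the flow surfaces is needed. Without either the explicit monotone quantity or this comparison structure, the bound with a constant depending only on $(\mu_0,I_0,c_0)$ does not follow from what you have written.
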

\begin{remark}
\label{remark_JL}
Recall that a bounded open set $\Omega$ in $(M,g)$ admits a minimizing hull $\tilde \Omega \supseteq \Omega$, i.e. a maximal open set containing $\Omega$ of the least possible perimeter. If $\partial \Omega$ is smooth, then $\partial \tilde \Omega$ is $C^{1,1}$ surface (we refer the reader to section 1 of \cite{HI} for details). In this case, if $\Omega$ fails to be outward-minimizing, then $\tilde \Omega$ satisfies the hypotheses of the theorem, provided $|\partial \tilde \Omega| \geq 36\pi \mu_0^2$. In particular, since $\tilde \Omega$ has greater volume and less perimeter than $\Omega$, \eqref{eqn_JL} holds for $\Omega$ as well (with the same constant $C$). Note also we can use bounded open and compact sets interchangeably, by the regularity assumptions on the boundary.
\end{remark}

In Lemma \ref{lemma_JL} in the appendix, we will show the constant $C$ can be chosen to be $\gamma_0 \mu_0^2(1 + I_0^2 c_0^{-3})$, where $\gamma_0$ is a universal constant. When we use the theorem, we will simply take $\mu_0$ and $c_0$ to be the ADM mass and isoperimetric constant of $(M,g)$ itself; i.e. only the isoperimetric ratio will require additional control.

\begin{proof}[Proof of Theorem \ref{thm_ub}]
First, we point out a slight discrepancy in the hypotheses of Theorems \ref{thm_ub} and \ref{thm_JL}: the latter requires no interior minimal surfaces. However, as $m_{CV}$ (and obviously $m_{ADM})$ only depend on the geometry of the end (Lemma \ref{lemma_end}), we can without loss of generality replace $(M,g)$ in Theorem \ref{thm_ub} with the region exterior to (and including) the outermost minimal surface (i.e., delete the ``trapped region'' --- see \cite[Lemma 4.1]{HI}).

By the positive mass theorem, we may assume without loss of generality that $m_{ADM}(M,g)>0$, as the zero mass (hence, Euclidean case) is trivial.

For this direction, we seek a lower bound on capacity. Thus we are inspired by the isocapacitary inequality for Euclidean space and attempt to generalize the proof, beginning by following \cite[section 3.2]{PS}. The proof we present will soon deviate once we encounter the isoperimetric inequality; we will need to use Theorem \ref{thm_JL} on the level sets of the potential, which will entail obtaining precise control on these level sets.

Let $\Omega \subset M$ be a compact set with smooth boundary, whose interior contains $\partial M$. Let $\varphi$ be the capacitary potential of $\Omega$, i.e. the unique $g$-harmonic function on $M \setminus \mathring\Omega$ vanishing on $\partial \Omega$ and tending to 1 at infinity. In this proof, geometric quantities will be with respect to $g$, unless they are denoted with a $0$ subscript, indicating they are with respect to the Euclidean metric in the coordinate chart.

By the co-area formula
$$\capac(\Omega) = \frac{1}{4\pi} \int_{M \setminus \Omega} |\nabla \varphi|^2 dV =\frac{1}{4\pi}  \int_0^1 \int_{\Sigma_t} |\nabla \varphi| dA dt,$$
where $\Sigma_t = \varphi^{-1}(t)$, which is a smooth, compact surface for almost all $t$.  The Cauchy--Schwarz inequality gives
$$|\Sigma_t|^2 \leq \left( \int_{\Sigma_t} |\nabla \varphi| dA\right) \left( \int_{\Sigma_t} \frac{1}{|\nabla \varphi|} dA\right),$$
so
$$\capac(\Omega) \geq  \frac{1}{4\pi} \int_0^1\frac{|\Sigma_t|^2}{\int_{\Sigma_t} \frac{1}{|\nabla \varphi|} dA}  dt.$$
The next step would be to apply the isoperimetric inequality. We postpone this by inserting the isoperimetric ratio 
$$I(t) = \frac{|\Sigma_t|^{3/2}}{6\sqrt{\pi}V(t)}$$
where $V(t) = |\Omega_t|$ and $\Omega_t$ is the compact region bounded by $\Sigma_t$, i.e. $\Omega_t = \Omega \cup \varphi^{-1}[0,t]$. By the co-area formula,
$$V(t) = |\Omega| + \int_{0}^t \int_{\Sigma_s} \frac{1}{|\nabla \varphi|} dA ds,$$
so for almost all $t$,
$$V'(t) = \int_{\Sigma_t} \frac{1}{|\nabla \varphi|} dA.$$
Then
\begin{align}
\capac(\Omega) &\geq  \frac{\left(6\sqrt{\pi}\right)^{4/3}}{4\pi} \int_0^1\frac{V(t)^{4/3} I(t)^{4/3} }{V'(t)}  dt \nonumber\\
&=\underbrace{\frac{\left(6\sqrt{\pi}\right)^{4/3}}{4\pi} \int_0^1\frac{V(t)^{4/3} }{V'(t)}  dt}_{\rm \mathbf I} - \underbrace{\frac{\left(6\sqrt{\pi}\right)^{4/3}}{4\pi} \int_0^1\frac{V(t)^{4/3} \left(1-I(t)^{4/3}\right) }{V'(t)}  dt}_{\rm \mathbf {II}}. \label{two_integrals}
\end{align}
Expression ${\rm \mathbf {I}}$ can be estimated from below as follows (completely following the Euclidean case):
define $R(t)$ via $\frac{4}{3} \pi R(t)^3 = V(t)$, so $V'(t) = 4\pi R(t)^2 R'(t)$ for almost all $t$. Then
\begin{align*}
 \int_0^1\frac{V(t)^{4/3} }{V'(t)}  = \frac{\left(\frac{4\pi}{3}\right)^{4/3}}{(4\pi)^2}\int_0^1 \frac{4\pi R(t)^2}{R'(t)} dt. 
\end{align*}
On $\R^3$ let $\tilde S_r$ denote the sphere of radius $r$ about the origin, and let $\tilde \varphi(t)$ be the (Lipschitz) function that equals $t$ on $\tilde S_{R(t)}$. In particular, $\tilde \varphi(t)$ is a valid test function for the Euclidean capacity of the ball $\tilde B_{R(0)}$ of radius $R(0)$, and $|\nabla \tilde \varphi(t)|_0 = \frac{1}{R'(t)}$ for almost all $t$. Then using the co-area formula, we have

\begin{align}
\rm \mathbf {I} &= \frac{\left(6\sqrt{\pi}\right)^{4/3}}{4\pi} \cdot \frac{\left(\frac{4\pi}{3}\right)^{4/3}}{(4\pi)^2}\int_0^1 \frac{4\pi R(t)^2}{R'(t)} dt\nonumber\\
&= \frac{1}{4\pi} \int_0^1 \int_{\tilde S_{R(t)}} |\nabla \tilde \varphi(t)|_0 dA_0 dt\nonumber\\
&= \frac{1}{4\pi} \int_{\R^3 \setminus \tilde B_{R(0)}} |\nabla \tilde \varphi|_0^2 dV_0\nonumber\\
&\geq \capac_0 (\tilde B_{R(0)})\nonumber\\
&= R(0)\nonumber\\
&= \left(\frac{3|\Omega|}{4\pi}\right)^{1/3} \label{eqn_I}
\end{align}
since the capacity and the volume radius agree for Euclidean balls.

To address the term ${\rm \mathbf {II}}$ in \eqref{two_integrals}, we first need to understand the isoperimetric ratio $I(t)$. For this, we will apply Theorem \ref{thm_JL} to the $\Omega_t$. As mentioned earlier, we will use $\mu_0=m_{ADM}(M,g)>0$ and $c_0$ the (positive) isoperimetric constant of $(M,g)$. Then in light of Lemma \ref{lemma_JL} in the appendix and Remark \ref{remark_JL} we have (letting $m=m_{ADM}(M,g)$)
\begin{equation}
\label{eqn_JL_est}
\frac{2}{|\Sigma_t|} \left( V(t) - \frac{1}{6\sqrt{\pi}}|\Sigma_t|^{3/2}\right) \leq m + \frac{\gamma_1 I(t)^2}{\sqrt{|\Sigma_t|}}
\end{equation}
for almost all $t$, for a constant $\gamma_1$ depending only on $m$ and $c_0$, provided the least area needed to enclose $\Sigma_t$ is $\geq 36\pi m^2$. (This will be verified later; we assume it to hold for now.) Below, $\gamma_2$ and $\gamma_3$ will be constants depending only on $m$ and $c_0$.
Then
$$\frac{1}{I(t)} -1\leq \frac{3\sqrt{\pi} m}{\sqrt{|\Sigma_t|}} + \frac{3\sqrt{\pi}\gamma_1 I(t)^2}{|\Sigma_t|}.$$
so
\begin{align*}
1-I(t)^{4/3}  &\leq 1- \frac{1}{\left(1+ \frac{3\sqrt{\pi} m}{\sqrt{|\Sigma_t|}} + \frac{3\sqrt{\pi}\gamma_1 I(t)^2}{|\Sigma_t|}\right)^{4/3}}\\
&\leq \frac{4\sqrt{\pi} m}{\sqrt{|\Sigma_t|}} + \frac{\gamma_2 I(t)^2}{|\Sigma_t|}.
\end{align*}

Thus,
\begin{equation}
\label{eqn_II}
{\rm \mathbf {II}} \leq \underbrace{\frac{\left(6\sqrt{\pi}\right)^{4/3}}{4\pi} (4\sqrt{\pi} m) \int_0^1\frac{V(t)^{4/3} }{\sqrt{|\Sigma_t|}V'(t)}  dt}_{\rm \mathbf {III}} +  \gamma_3\underbrace{ \int_0^1\frac{I(t)^2V(t)^{4/3} }{|\Sigma_t|V'(t)}  dt}_{\rm \mathbf {IV}}.
\end{equation}
The goal of the proof is to show ${\rm \mathbf {III}}$ is the ADM mass $m$ modulo decaying terms, and ${\rm \mathbf {IV}}$ decays (as $|\Omega|$ gets very large).

\medskip

Up to this point, the argument applied to a general region $\Omega$, but now we specialize to the case in which $\Omega$ is a large coordinate ball. This will allow us to gain enough control on the level sets of the potential to unravel ${\rm \mathbf {III}}$ and ${\rm \mathbf {IV}}$. We proceed to gather some estimates before doing so.

Assume $\rho_0$ is sufficiently large  so that $S_{\rho}$ is a well-defined, outward-minimizing coordinate sphere $\{|x|=\rho\}$  in $M$ for all $\rho \geq \frac{1}{2}\rho_0$. (The outward-minimizing condition can be assured because $M$ is foliated near infinity by coordinate spheres of positive mean curvature.) Let $B_{\rho}$ be the compact region bounded by $S_{\rho}$. Assume $\rho \geq \rho_0$. (Throughout the proof we will increase $\rho_0$ and always assume $\rho \geq \rho_0$.)
Let $\varphi=\varphi_{\rho}$ be the capacitary potential for $\Omega=B_{\rho}$. From asymptotic flatness, once would expect $B_{\rho}$ to have capacity approximately equal to $\rho$ for large $\rho$, so that $\varphi_{\rho}$ will approximately have the expansion $1-\frac{\rho}{|x|} + \ldots$ at infinity. To make this precise, from Lemma \ref{lemma_potential} below, we may increase $\rho_0$ if necessary and find functions $a_1 = \rho - k_1\rho^{1-\tau}$ and $a_2 = \rho + k_1 \rho^{1-\tau}$ (for a constant $k_1>0$ independent of $\rho$) so that
\begin{align}
|c(\rho) -\rho|& \leq k_1\rho^{1-\tau}\nonumber\\
\label{phi_est}
\frac{a_1}{|x|} \leq 1-\varphi_{\rho}(x) &\leq \frac{a_2}{|x|}\\
\frac{a_1}{|x|^2} \leq |\nabla \varphi_{\rho}(x) |&\leq \frac{a_2}{|x|^2}\label{dphi_est}
\\
\left|\Hess(\varphi_\rho)\left(\frac{\nabla \varphi_\rho}{|\nabla \varphi_\rho|},\frac{\nabla \varphi_\rho}{|\nabla \varphi_\rho|} \right) +\frac{2c(\rho)^3}{|x|^7}\right| &\leq \frac{k_1\rho^3}{|x|^{7+\tau}}  \label{ddphi_est}
\end{align}
for $x \in M \setminus \mathring B_{\rho}$, where $c(\rho)=\capac(B_\rho)$. Later we will repeatedly use the fact any pair of the quantities $\rho, c(\rho), a_1$, and $a_2$ has its ratio limiting to 1 as $\rho \to \infty$.

If necessary, increase $\rho_0$ so that 
\begin{equation}
\label{2r1}
a_1 \geq \frac{1}{2}\rho
\end{equation}
for all $\rho \geq \rho_0$. In particular, by \eqref{dphi_est}, all of the level sets $\Sigma_t$ of $\varphi_{\rho}$ are smooth. (We omit ``$\rho$'' from the notation for $\Sigma_t$ and $\Omega_t$.)

From \eqref{phi_est} it follows that
$$B_{\frac{a_1}{1-t}} \subseteq \Omega_t \subseteq B_{\frac{a_2}{1-t}}.$$
In particular, 
\begin{equation}
\label{vol_ub}
\left|B_{\frac{a_1}{1-t}}\right| \leq V(t) \leq \left|B_{\frac{a_2}{1-t}}\right|
\end{equation}
and $\Sigma_t$ encloses the coordinate sphere $S_{\frac{a_1}{1-t}}$. 
Moreover, since $\frac{a_1}{1-t} \geq a_1  \geq \frac{1}{2}\rho$, $S_{\frac{a_1}{1-t}}$ is outward-minimizing with respect to $g$. Then
\begin{equation}
\label{area_lb}
|\Sigma_t| \geq \left|S_{\frac{a_1}{1-t}}\right|.
\end{equation}
(An upper bound on $|\Sigma_t|$ will be found later; this is more subtle.)
These inequalities will ultimately be used in \eqref{eqn_II}. We also need:
\begin{equation}
\label{eqn_V_prime}
V'(t)= \int_{\Sigma_t} \frac{1}{|\nabla \varphi_\rho|} dA \geq \int_{\Sigma_t} \frac{|x|^2}{a_2} dA \geq \frac{a_1^2}{a_2 (1-t)^2} |\Sigma_t|
\end{equation}
which follows from the the co-area formula, \eqref{dphi_est}, and \eqref{phi_est}.

To use these estimates in ${\rm \mathbf {III}}$ and ${\rm \mathbf {IV}}$, we will relate volumes and areas back to the Euclidean metric. From the AF decay of $g$, it is straightforward to estimate the relatives sizes of $B_\rho$ and $S_\rho$ with respect to $g$ and  their Euclidean counterparts. (This is recalled more precisely than needed here in the proof of Proposition \ref{prop_lower}.) In particular, there exists a constant $k_2>0$ depending only on $g$, so that for $\rho \geq \rho_0$,
\begin{equation}
\label{vol}
\frac{4}{3}\pi \rho^3 (1-k_2 \rho^{-\tau}) \leq |B_\rho| \leq \frac{4}{3}\pi \rho^3 (1+k_2 \rho^{-\tau})
\end{equation}
and
\begin{equation}
\label{area}
 4\pi \rho^2(1-k_2 \rho^{-\tau}) \leq |S_\rho| \leq 4\pi \rho^2(1+k_2 \rho^{-\tau}).
\end{equation}
Now, let $\epsilon \in (0,1)$ be arbitrary. By increasing $\rho_0$ if necessary, we can be sure that $k_2 \rho^{-\tau} < \epsilon$ for all $\rho \geq \frac{1}{2}\rho_0$. Then combining \eqref{vol_ub} with \eqref{vol} (and using \eqref{2r1}) we obtain
\begin{equation}
\label{V_upper}
V(t) \leq \frac{4\pi}{3} \frac{a_2^3}{(1-t)^3} \left( 1 + k_2  \left(\frac{a_2}{1-t}\right)^{-\tau}\right) \leq \frac{4\pi}{3} \frac{a_2^3}{(1-t)^3} \left(1+\epsilon\right),
\end{equation}
and likewise
\begin{equation}
\label{V_lower}
V(t) \geq \frac{4\pi}{3} \frac{a_1^3}{(1-t)^3} \left(1-\epsilon\right).
\end{equation}
We also combine \eqref{area_lb} with \eqref{area} to obtain
\begin{equation}
\label{A_lower}
|\Sigma_t| \geq 4\pi \frac{a_1^2}{(1-t)^2} \left(1-k_2 \left(\frac{a_1}{1-t}\right)^{-\tau}\right) \geq 4\pi \frac{a_1^2}{(1-t)^2} \left(1-\epsilon\right).
\end{equation}

Now we can verify the lower bound on the minimum enclosing area of $\Sigma_t$ (which was needed to assert \eqref{eqn_JL_est}). Since
$S_{\frac{a_1}{1-t}}$ is outward-minimizing with respect to $g$, and $\Sigma_t$ encloses this surface, the least $g$-area needed to enclose $\Sigma_t$ is at least
$$\left|S_{\frac{a_1}{1-t}}\right|_g \geq 4\pi \frac{a_1^2}{(1-t)^2} \left(1-k_2 \left(\frac{a_2}{1-t}\right)^{-\tau}\right) \geq 4\pi a_1^2 (1-\epsilon).$$
Again increasing $\rho_0$ if necessary we can be sure this is at least $36\pi m^2$ for all $\rho \geq \rho_0$ and all $t \in [0,1)$.

Thus, we may apply Theorem \ref{thm_JL} to $\Omega_t$; in particular \eqref{eqn_II} holds. We can also now estimate ${\rm \mathbf {III}}$ as (using \eqref{V_upper}, \eqref{eqn_V_prime}, and \eqref{A_lower}):
\begin{align*}
{\rm \mathbf {III}} = \frac{\left(6\sqrt{\pi}\right)^{4/3}}{4\pi} (4\sqrt{\pi} m) \int_0^1\frac{V(t)^{4/3} }{\sqrt{|\Sigma_t|}V'(t)}  dt &\leq \kappa \int_0^1 \frac{\frac{a_2^4}{(1-t)^4} \left(1+\epsilon\right)^{4/3} }{\frac{a_1^2}{a_2 (1-t)^2}|\Sigma_t|^{3/2}}dt\\
&\leq\frac{\kappa}{(4\pi)^{3/2}}\int_0^1 \frac{\frac{a_2^4}{(1-t)^4} \left(1+\epsilon\right)^{4/3} }{\frac{a_1^2}{a_2 (1-t)^2} \frac{a_1^3}{(1-t)^3}\left(1-\epsilon\right)^{3/2}}dt\\
&= (2m)\frac{ \left(1+\epsilon\right)^{4/3}a_2^5}{\left(1-\epsilon\right)^{3/2}a_1^5}  \int_0^1 (1-t) dt,
\end{align*}
where
$$\kappa = \frac{\left(6\sqrt{\pi}\right)^{4/3}}{4\pi} (4\sqrt{\pi} m) \left(\frac{4\pi}{3}\right)^{4/3}.$$
In particular, $\displaystyle \limsup_{\rho \to \infty}$ of ${\rm \mathbf {III}}$ is at most $\frac{ \left(1+\epsilon\right)^{4/3}}{\left(1-\epsilon\right)^{3/2}} m$, since $\frac{a_2}{a_1}$ limits to 1.

Expression ${\rm \mathbf {IV}}$ is similar to ${\rm \mathbf {III}}$, except for the (favorable) additional $\sqrt{|\Sigma_t|}$ factor in the denominator and the (unfavorable) dependence on the isoperimetric ratio in the numerator. We will handle this by finding an appropriate upper bound on the area of the level sets:

\begin{lemma}
\label{lemma_isop}
For any $q >0$, we may increase $\rho_0$ if necessary to ensure
$$|\Sigma_t| \leq \frac{|S_{\rho}|}{(1-t)^{2+q}}$$
for all level sets $\Sigma_t$ of $\varphi_{\rho}$, and for all $\rho \geq \rho_0$.
\end{lemma}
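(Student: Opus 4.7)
The plan is to use the first variation of area along the flow generated by the capacitary potential, reducing the claim to a pointwise estimate on the mean curvature of level sets. Since $|\nabla\varphi_\rho| > 0$ on $M \setminus \mathring B_\rho$ by \eqref{dphi_est}, every level set $\Sigma_t = \varphi_\rho^{-1}(t)$ with $t \in [0,1)$ is a smooth closed surface, and the vector field $X = \nabla\varphi_\rho/|\nabla\varphi_\rho|^2$ flows $\Sigma_0 = S_\rho$ onto $\Sigma_t$ in time $t$ with normal speed $|\nabla\varphi_\rho|^{-1}$. The first variation of area then reads
\begin{equation*}
\frac{d}{dt}|\Sigma_t| = \int_{\Sigma_t} \frac{H}{|\nabla\varphi_\rho|}\, dA,
\end{equation*}
where $H$ is the mean curvature of $\Sigma_t$ with respect to $\nu = \nabla\varphi_\rho/|\nabla\varphi_\rho|$. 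Because $\Delta\varphi_\rho = \Hess(\varphi_\rho)(\nu,\nu) + \mathrm{tr}_{\Sigma_t}\Hess(\varphi_\rho) = 0$, one has the standard identity $H = -\Hess(\varphi_\rho)(\nu,\nu)/|\nabla\varphi_\rho|$, giving
\begin{equation*}
\frac{H}{|\nabla\varphi_\rho|} = -\frac{\Hess(\varphi_\rho)(\nabla\varphi_\rho,\nabla\varphi_\rho)}{|\nabla\varphi_\rho|^4}.
\end{equation*}

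The heart of the argument is the pointwise estimate $H/|\nabla\varphi_\rho| \leq (2+q)/(1-t)$ on $\Sigma_t$, for $\rho \geq \rho_0$ with $\rho_0$ chosen in terms of $q$. Points on $\Sigma_t$ satisfy $|x| \in [a_1/(1-t), a_2/(1-t)]$ by \eqref{phi_est}. Inserting this together with $|\nabla\varphi_\rho| \geq a_1/|x|^2$ from \eqref{dphi_est} and the Hessian bound \eqref{ddphi_est}, and using that $c(\rho)$, $a_1$, and $a_2$ are each $\rho(1+O(\rho^{-\tau}))$, a direct computation shows the leading contribution to $H/|\nabla\varphi_\rho|$ on $\Sigma_t$ equals $2(1+O(\rho^{-\tau}))/(1-t)$, while the error from the $k_1\rho^3/|x|^{7+\tau}$ term in \eqref{ddphi_est} contributes at most a constant multiple of $\rho^{-\tau}/(1-t)$, after absorbing an extra $(1-t)^\tau \leq 1$ factor arising from the bound $|x| \leq a_2/(1-t)$. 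Hence $H/|\nabla\varphi_\rho| \leq (2 + O(\rho^{-\tau}))/(1-t)$ uniformly in $t$, and taking $\rho_0$ so large that the $O(\rho^{-\tau})$ is at most $q$ yields the claim.

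Granted this pointwise bound, the first variation formula gives $\frac{d}{dt}\log|\Sigma_t| \leq (2+q)/(1-t)$. Integrating from $0$ to $t$ and using the initial value $|\Sigma_0| = |S_\rho|$ produces $|\Sigma_t| \leq |S_\rho|(1-t)^{-(2+q)}$, the desired inequality. The main obstacle is verifying the pointwise Hessian bound uniformly in $t \in [0,1)$: a priori the error from \eqref{ddphi_est} carries a factor $(1-t)^{\tau-1}$ on $\Sigma_t$, which for $\tau < 1$ is singular as $t \to 1$; the key point is that the $|x|^{-\tau}$ decay in \eqref{ddphi_est}, once restricted to $\Sigma_t$, provides an extra $(1-t)^\tau \leq 1$ factor that precisely tames this singularity, leaving the error subdominant to the leading $(1-t)^{-1}$ behavior.
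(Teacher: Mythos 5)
Your proposal is correct and follows essentially the same route as the paper: both arguments combine the first variation of area with the identity $H=-\Hess(\varphi_\rho)(\nu,\nu)/|\nabla\varphi_\rho|$ coming from harmonicity, then feed in the bounds \eqref{phi_est}--\eqref{ddphi_est} to get a differential inequality $\frac{d}{dt}|\Sigma_t|\leq \frac{2+q}{1-t}|\Sigma_t|$ that integrates to the claim. The only cosmetic difference is that the paper first bounds $|H|\leq (2+\tfrac{q}{2})/|x|$ and then divides by $|\nabla\varphi_\rho|$, whereas you estimate the ratio $H/|\nabla\varphi_\rho|$ in one step; your observation about the $(1-t)^\tau$ factor taming the error term is accurate and implicit in the paper's version.
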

\begin{proof}
We approach this by bounding the mean curvature of the level sets. 

Recall that every level set $\Sigma_t$ of $\varphi_{\rho}$ is regular. By decomposing the Laplacian of $\varphi_{\rho}$ on $M$ into the tangential and normal parts to $\Sigma_t$, we have
$$\cancel{\Delta_M u} = \cancel{\Delta_{\Sigma_t} u} + \Hess(\varphi_{\rho})(\nu, \nu) + H \nu(\varphi_{\rho}),$$
on each level set, where $H=H_t^{\rho}$ is the mean curvature of $\Sigma_t$ and $\nu = \frac{\nabla \varphi_{\rho}}{|\nabla \varphi_{\rho}|}$ is the unit normal to $\Sigma_t$. In particular, using \eqref{dphi_est} and \eqref{ddphi_est},
\begin{align*}
|H_t^{\rho}(x)| &= \frac{|\Hess(\varphi_{\rho})(\nabla \varphi_{\rho}, \nabla \varphi_{\rho})|}{|\nabla \varphi_{\rho}|^3}\\
&\leq \frac{|x|^6}{a_1^3} \left(\frac{2c(\rho)^3}{|x|^7} + \frac{k_1\rho^3}{|x|^{7+\tau}} \right).
\end{align*}
Since $\frac{c(\rho)}{a_1}$ limits to 1 as $\rho \to \infty$ and $|x| \geq \rho$, we can increase $\rho_0$ if necessary to assure
$$|H_t^{\rho}(x)| \leq \frac{2+\frac{q}{2}}{|x|}.$$
Using the first variation of area followed by \eqref{dphi_est} and \eqref{phi_est}, 
\begin{align*}
\frac{d}{dt} |\Sigma_t| &= \int_{\Sigma_t} \frac{H_t^{\rho}}{|\nabla \varphi_{\rho}|} dA\\
&\leq \int_{\Sigma_t} \frac{(2+\frac{q}{2})|x|}{a_1}dA\\
&\leq \frac{(2+\frac{q}{2})a_2}{(1-t)a_1} |\Sigma_t|
\end{align*}
Again increasing $\rho_0$ and $\rho$ if necessary, we may assume
\begin{align*}
\frac{d}{dt} |\Sigma_t|_g &\leq \frac{2+q}{1-t} |\Sigma_t|.
\end{align*}
By an elementary comparison argument,
$$|\Sigma_t| \leq \frac{|\Sigma_0|}{(1-t)^{2+q}},$$
and of course $\Sigma_0 = S_{\rho}$.
\end{proof}

Then we can estimate:
\begin{align*}
{\rm \mathbf {IV}} &= \int_0^1 \frac{I(t)^2}{\sqrt{|\Sigma_t|}}\cdot \frac{V(t)^{4/3} }{\sqrt{|\Sigma_t|} V'(t)}  dt\\
&= \frac{1}{36\pi}\int_0^1 \frac{|\Sigma_t|^{5/2} }{V(t)^2}\cdot \frac{V(t)^{4/3} }{\sqrt{|\Sigma_t|} V'(t)}  dt\\
&\leq c \int_0^1 \frac{|S_{\rho}|^{5/2} (1-t)^6 }{a_1^6(1-t)^{\frac{5}{2}(2+q)}}\cdot \frac{V(t)^{4/3} }{\sqrt{|\Sigma_t|} V'(t)}  dt\\
&= O(\rho^{-1})\cdot {\rm \mathbf {III}}
\end{align*}
for some constant $c$ independent of $\rho$, having used \eqref{V_lower}; applied Lemma \ref{lemma_isop} with $q=\frac{2}{5}$; and noted $|S_\rho|^{5/2} a_1^{-6}$ is $O(\rho^{-1})$. Thus $\displaystyle\lim_{\rho \to \infty} {\rm \mathbf {IV}}=0$.

Finally, to put it all together, from \eqref{two_integrals}, \eqref{eqn_I}, and \eqref{eqn_II},
\begin{align*}
\left(\frac{3|B_{\rho}|}{4\pi}\right)^{1/3} - \capac(B_{\rho}) &\leq {\rm \mathbf {II}}\\
&\leq {\rm\mathbf {III}} + {\rm \mathbf {IV}}.
\end{align*}
Then by our estimates for ${\rm\mathbf {III}}$ and ${\rm\mathbf {IV}}$,
$$\limsup_{\rho \to \infty} \left[\left(\frac{3|B_{\rho}|}{4\pi}\right)^{1/3} - \capac(B_{\rho})\right] \leq \frac{ \left(1+\epsilon\right)^{4/3}}{\left(1-\epsilon\right)^{3/2}} m.$$
Since $\epsilon $ with arbitrary, the proof of Theorem \ref{thm_ub} is complete (except for the proof of Lemma \ref{lemma_potential}).
\end{proof}

It is well known that a capacitary potential in an AF manifold admits a ``$\frac{1}{|x|}$'' expansion at infinity; the following lemma used in the proof of Theorem \ref{thm_ub}  establishes a type of uniform control on these potentials for balls as the radii run off to infinity, e.g. the error terms are uniformly controlled modulo a factor of $\rho$.
\begin{lemma}
\label{lemma_potential}
Let $(M,g)$ be an asymptotically flat 3-manifold of order $\frac{1}{2} < \tau < 1$. Fix a corresponding AF coordinate chart on $M$. Then there exists $\rho_0>0, k_1>0$ such that for all $\rho \geq \rho_0$, the capacitary potential $\varphi_{\rho}$ of the coordinate ball $B_{\rho}$ satisfies
$$\varphi_{\rho}(x) = 1 - \frac{c(\rho)}{|x|} + W_{\rho}(x),$$
where $c(\rho)$ is the capacity of $B_{\rho}$ with respect to $g$, and
\begin{align}
|c(\rho) - \rho| &\leq k_1 \rho^{1-\tau} \label{eqn_c_r1}\\
|W_{\rho}(x)| &\leq \frac{k_1 \rho}{|x|^{1+\tau}} \label{W}\\
|\partial_iW_{\rho}(x)| &\leq \frac{k_1 \rho}{|x|^{2+\tau}}\label{dW}\\
|\partial_i \partial_j W_{\rho}(x)| &\leq \frac{k_1 \rho}{|x|^{3+\tau}}.\label{ddW}
\end{align}
Moreover,
$$\left|\Hess(\varphi_\rho)\left(\frac{\nabla \varphi_\rho}{|\nabla \varphi_\rho|},\frac{\nabla \varphi_\rho}{|\nabla \varphi_\rho|} \right) +\frac{2c(\rho)^3}{|x|^7}\right| \leq \frac{k_1\rho^3}{|x|^{7+\tau}}$$
\end{lemma}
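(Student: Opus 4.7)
The plan is to prove Lemma \ref{lemma_potential} by a rescaling-and-perturbation argument that reduces the family of problems (one for each large $\rho$) to a single Dirichlet problem on a fixed exterior domain whose metric converges to Euclidean. First, change variables $y = x/\rho$ and set $\tilde g_{\rho,ij}(y) = g_{ij}(\rho y)$ on $\{|y| \geq 1\}$; by asymptotic flatness, $\tilde g_\rho - \delta = O(\rho^{-\tau}|y|^{-\tau})$ with matching decay for derivatives, so $\tilde g_\rho$ converges smoothly to the Euclidean metric as $\rho\to\infty$. Setting $\tilde\varphi_\rho(y) = \varphi_\rho(\rho y)$, the function $\tilde\varphi_\rho$ is $\tilde g_\rho$-harmonic on $\{|y|\geq 1\}$, vanishes on $\{|y|=1\}$, tends to $1$ at infinity, and the capacity rescales as $c(\rho) = \rho \cdot \capac_{\tilde g_\rho}(\{|y|\leq 1\})$.

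Second, I would compare $\tilde\varphi_\rho$ to the Euclidean capacitary potential $u_0(y) = 1 - 1/|y|$. Setting $\tilde v_\rho := \tilde\varphi_\rho - u_0$, this difference has zero Dirichlet data on $\{|y|=1\}$, tends to $0$ at infinity, and satisfies
\begin{equation*}
\Delta_{\tilde g_\rho} \tilde v_\rho \;=\; -\bigl(\Delta_{\tilde g_\rho} - \Delta_0\bigr)(u_0).
\end{equation*}
A direct pointwise calculation, using that $u_0$ is $\Delta_0$-harmonic with $|\partial^k u_0| = O(|y|^{-1-k})$ together with the AF decay of $\tilde g_\rho - \delta$, yields the uniform source bound $|\Delta_{\tilde g_\rho} \tilde v_\rho(y)| \le C \rho^{-\tau} |y|^{-3-\tau}$.

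Third, I would use weighted elliptic theory (Bartnik-type) on the exterior of the unit ball. Because $\tilde g_\rho$ is a uniformly small perturbation of $\delta$, the Laplacian $\Delta_{\tilde g_\rho}$ with zero Dirichlet data on $\{|y|=1\}$ and decay at infinity is uniformly (in large $\rho$) an isomorphism between appropriate weighted Hölder spaces. To extract the leading $1/|y|$ behavior, I would decompose
\begin{equation*}
\tilde\varphi_\rho(y) \;=\; 1 - \tilde c(\rho)/|y| + \tilde W_\rho(y),
\end{equation*}
where $\tilde c(\rho)$ is identified via the divergence theorem on large spheres with the rescaled capacity, and $\tilde W_\rho$ lies in the faster-decaying weighted space $C^{2,\alpha}_{-1-\tau}$. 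Equating $1/|y|$ coefficients against the source size gives $|\tilde c(\rho) - 1| \le C\rho^{-\tau}$, and applying the weighted estimate to the equation for $\tilde W_\rho$ yields $|\tilde W_\rho(y)| \le C\rho^{-\tau}|y|^{-1-\tau}$ with matching derivative bounds via Schauder theory. Rescaling back to $x = \rho y$ with $W_\rho(x) := \tilde W_\rho(x/\rho)$ and $c(\rho) := \rho \tilde c(\rho)$ produces \eqref{eqn_c_r1}--\eqref{ddW}, the derivative factors of $\rho$ arising naturally from the chain rule.

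Fourth, the Hessian estimate follows from the decomposition $\varphi_\rho = 1 - c(\rho)/|x| + W_\rho$ by direct computation: the Euclidean Hessian of $-c(\rho)/|x|$ contracted with $\nabla\varphi_\rho \approx c(\rho)\,x/|x|^3$ produces the principal term $-2 c(\rho)^3/|x|^7$, while the corrections from the $g$-Christoffel symbols (of order $|x|^{-\tau-1}$), the second derivatives of $W_\rho$ (bounded in step three), and the deviation of $\nabla\varphi_\rho$ from its Euclidean leading profile are each controlled by $\rho^3|x|^{-7-\tau}$. The principal obstacle is the uniform-in-$\rho$ solvability and weighted estimates on the fixed exterior domain, especially handling the marginal $1/|y|$ decay rate; this is managed by the explicit split into the capacity term and the faster-decaying remainder, which sidesteps the indicial-exponent issues of the Fredholm theory at weight $-1$.
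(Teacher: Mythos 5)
Your proposal is correct, but it reaches the lemma by a genuinely different route from the paper. The paper works directly in the original chart: it imports the expansion $\Phi = c_1|x|^{-1} + O(|x|^{-1-\tau})$ for the potential of the \emph{fixed} unit ball from \cite[Lemma A.2]{MMT}, builds explicit barriers $\psi_a = a\Phi - |x|^{-1-\epsilon}$ with $\epsilon\in(\tau,1)$ that are superharmonic outside a radius independent of the parameter $a$, and chooses $a$ so that the maximum principle traps $1-\varphi_\rho$ from above and below with the stated $\rho$-dependence; interior Schauder estimates, and a rerun of the MMT argument with the factor of $\rho$ tracked, then give \eqref{eqn_c_r1}--\eqref{ddW}. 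You instead rescale by $\rho$ to a fixed exterior domain, note that the rescaled metrics converge uniformly in weighted norms to $\delta$, and solve a single perturbed exterior Dirichlet problem by uniform weighted elliptic theory, splitting off the $|y|^{-1}$ deficiency mode to avoid the exceptional weight at $-1$. Both arguments are sound: yours makes the uniformity in $\rho$ automatic from scaling and cleanly isolates the indicial issue, at the cost of setting up the weighted Fredholm framework; the paper's is more elementary (maximum principle plus Schauder) and leans on the already-available MMT lemma. Two remarks. First, the identification $|\tilde c(\rho)-1|\le C\rho^{-\tau}$ is most quickly obtained from the uniform $(1+O(\rho^{-\tau}))$-equivalence of $\tilde g_\rho$ and $\delta$ together with the comparison $\Lambda^{-3}\capac_{g_2}\le\capac_{g_1}\le\Lambda^{3}\capac_{g_2}$ for uniformly equivalent metrics; this deserves a sentence in your write-up but is not a gap. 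Second, your step four computes the unnormalized contraction $\Hess(\varphi_\rho)(\nabla\varphi_\rho,\nabla\varphi_\rho)\approx -2c(\rho)^3|x|^{-7}$, which is what the paper's own proof computes and what Lemma \ref{lemma_isop} actually uses (the normalization by $|\nabla\varphi_\rho|$ displayed in the statement is inconsistent with the claimed principal term), so your computation matches the intended claim.
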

\begin{proof}
We will use \cite[Lemma A.2]{MMT}, which establishes the expansion for the potential of a fixed set, and then extend its proof as needed. For convenience here, we will tend to work with the opposite boundary conditions on harmonic functions, i.e.  values of 1 on the inner boundary and 0 at infinity.

Let $K$ be such that the AF coordinate chart $(x^1, x^2, x^3)$ is defined on $M \setminus K$, and without loss of generality assume the chart is defined for all $|x| \geq 1$. Let $\Phi$ be $g$-harmonic outside $B_1 \supset K$, equaling 1 on $\partial B_1$ and approaching 0 at infinity.
We have by \cite[Lemma A.2]{MMT}
\begin{equation}
\label{eqn_phi}
\Phi(x) = \frac{c_1}{|x|} + E(x), 
\end{equation}
for a constant $c_1>0$ (equaling the capacity of $B_1(0)$), where the error term satisfies
$$|E(x)| \leq k_0|x|^{-1-\tau} $$
for some $k_0>0$.

Fix any $\epsilon \in (\tau, 1)$. For a parameter $a>0$, define
$$\psi_a(x) = a\Phi(x) - \frac{1}{|x|^{1+\epsilon}},$$
a smooth function on $M \setminus B_1$. Let $\Delta$ and $\Delta_0$ be the $g$- and Euclidean Laplacians. Since $\Delta \Phi=0$, we have:
\begin{align*}
\Delta \psi_a &= \Delta_0 \left(- \frac{1}{|x|^{1+\epsilon}}\right) + (\Delta - \Delta_0) \left(- \frac{1}{|x|^{1+\epsilon}}\right)\\
&= -\epsilon(1+\epsilon) |x|^{-3-\epsilon} + O(|x|^{-3-\tau-\epsilon})
\end{align*}
where $O(|x|^{-3-\tau-\epsilon})$ is independent of $a$. In particular, for $\rho_0 \geq 1$ sufficiently large and independent of $a$, $\Delta_g \psi_a  \leq 0$ on $M \setminus B_{\rho_0}$. If necessary, increase $\rho_0$ so that
$$1 - \frac{k_0}{c_1 \rho_0^\tau} \geq \frac{1}{2}.$$

For any $\rho \geq \rho_0$, let $\varphi_{\rho}$ be the capacitary potential for $ B_{\rho}$, and let $\tilde \varphi_{\rho} = 1 - \varphi_{\rho}$, which is also harmonic, with the boundary values of 1 on $S_\rho$ and $0$ at infinity.  Choose the parameter
$$a =  \frac{\rho + \rho^{-\epsilon}}{c_1 - k_0 \rho^{-\tau}} >0.$$
In particular for any $x \in \partial B_{\rho}$, by \eqref{eqn_phi},
\begin{align*}
\psi_a(x) &\geq a \left( \frac{c_1}{\rho} -\frac{k_0}{\rho^{1+\tau}}\right) - \rho^{-1-\epsilon} = 1
\end{align*}
by the choice of $a$. Then by the maximum principle, $\psi_a \geq \tilde \varphi_{\rho}$ on $M \setminus \mathring B_{\rho}$. Then again by \eqref{eqn_phi}, we have for $x \in M \setminus B_\rho$
\begin{align}
\tilde \varphi_{\rho}(x) &\leq \frac{\rho + \rho^{-\epsilon}}{c_1 - k_0 \rho^{-\tau}}  \left( \frac{c_1}{|x|} + \frac{k_0}{|x|^{1+\tau}}\right)\nonumber\\
&\leq  \frac{\rho + O(\rho^{1-\tau})}{|x|} + \frac{2k_0\rho}{c_1|x|^{1+\tau}} \label{phi_upper},
\end{align}
having used $\epsilon > \tau$.
A similar argument shows
\begin{align}
\tilde \varphi_{\rho}(x) &\geq \frac{\rho - O(\rho^{1-\tau})}{|x|} - \frac{2k_0\rho}{c_1|x|^{1+\tau}} \label{phi_lower}.
\end{align}
The novelty here is first that we have uniform control on the $1/|x|$ coefficient in terms of $\rho$ and second that we have uniform control on the remainder term in terms of $\rho$. As explained in \cite[Lemma A.2]{MMT}, interior Schauder estimates \cite[Theorem 6.2]{GT} imply that there exists a constant $C>0$ independent of $\rho$ such that
$$ |\partial \tilde \varphi_{\rho}| \leq \frac{C\rho}{|x|^2}, \qquad |\partial^2\tilde \varphi_{\rho}| \leq \frac{C\rho}{|x|^3}$$
on $M \setminus B_{\rho}$, where again the point is that we have control in terms of $\rho$. The same argument in  \cite[Lemma A.2]{MMT}, but accounting for this factor of $\rho$, shows that
$$\tilde \varphi_{\rho} = \frac{c(\rho)}{|x|} + W_{\rho}(x)$$
for a function $c$ depending on $\rho$ only, where
$$|W_{\rho}(x)| \leq \frac{C\rho}{|x|^{1+\tau}}.$$
This expression for $\tilde \varphi_{\rho}$, along with \eqref{phi_upper} and \eqref{phi_lower}, implies \eqref{eqn_c_r1}. (Of course, $\varphi_{\rho} = 1 - \tilde \varphi_{\rho}$.)

Interior Schauder estimates give \eqref{dW} and \eqref{ddW}, and it is easy to check that $c(\rho)$ equals the capacity of $B_{\rho}$ with respect to $g$.

Last, we address the final claim. By asymptotic flatness we have $\Hess(\varphi_{\rho}) = \Hess_0(\varphi_{\rho}) +  O(\rho r^{-3-\tau})$. Second
$$\nabla \varphi_{\rho} = \nabla_0 \varphi_{\rho} + X_{\rho},$$
where $|X_{\rho}| = O(\rho r^{-2-\tau}).$ In particular,
\begin{align*}
\Hess(\varphi_{\rho})(\nabla \varphi_{\rho}, \nabla \varphi_{\rho}) &= \Hess_0(\varphi_{\rho}) (\nabla_g \varphi_{\rho}, \nabla_g \varphi_{\rho}) + O( \rho^3r^{-7-\tau})\\
&= \Hess_0(\varphi_{\rho}) (\nabla_0 \varphi_{\rho} + X_{\rho}, \nabla_0 \varphi_{\rho} +X_{\rho}) + O( \rho^3r^{-7-\tau})\\
&= \Hess_0(\varphi_{\rho}) (\nabla_0 \varphi_{\rho} , \nabla_0 \varphi_{\rho}) + O( \rho^3r^{-7-\tau})\\
&= \Hess_0(\psi_{\rho}) (\nabla_0 \psi_{\rho} , \nabla_0 \psi_{\rho}) + O( \rho^3r^{-7-\tau}),
\end{align*}
where $\psi_{\rho}= 1 - \frac{c(\rho)}{|x|}$ is the leading part of $\varphi_{\rho}$. It is elementary to check
$$\Hess_0(\psi_{\rho}) (\nabla_0 \psi_{\rho} , \nabla_0 \psi_{\rho})  = \sum_{i,j=1}^3(\psi_{\rho})_{ij} (\psi_{\rho})_{i} (\psi_{\rho})_{j} = \frac{-2c(\rho)^3}{|x|^7},$$
so the result follows.
\end{proof}

\subsection{Capacity expansion for large balls}
As a consequence of results proved so far, we obtain an expansion for the capacity of a large ball in an AF manifold as a function of the radius. 
\begin{cor}
\label{cor_expansion}
Let $(M,g)$ be an asymptotically flat 3-manifold of nonnegative scalar curvature, with boundary either empty or minimal. Let $B_{r}$ be the compact region bounded by the coordinate sphere of radius $r$ in some AF coordinate chart. Then for large $r$,
$$\capac(B_r) = r +\frac{\beta(r)}{2r} - \frac{m}{2} + o(1),$$
where $\beta(r)$ is defined by \eqref{eqn_beta}.
\end{cor}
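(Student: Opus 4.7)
The plan is to combine the two inequalities already established---Theorem \ref{thm_ub} and Proposition \ref{prop_lower}---specialized to the coordinate ball exhaustion $\{B_r\}$, and then extract $\capac(B_r)$ algebraically from the volume expansion \eqref{eqn_vol_estimate} derived during the proof of Proposition \ref{prop_lower}.

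First I would set $m = m_{ADM}(M,g)$ and observe that the hypotheses of the corollary match those of Theorem \ref{thm_ub} and are stronger than those of Proposition \ref{prop_lower}. The latter, applied to the ball exhaustion, yields
$$\liminf_{r\to\infty}\left[\left(\frac{3V(r)}{4\pi}\right)^{1/3} - \capac(B_r)\right] \geq m,$$
while Theorem \ref{thm_ub} is precisely the reverse $\limsup$ inequality for this exhaustion. Together they force the limit to exist and equal $m$.

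Next I would substitute the expansion \eqref{eqn_vol_estimate},
$$\left(\frac{3V(r)}{4\pi}\right)^{1/3} = r + \frac{m}{2} + \frac{\beta(r)}{2r} + o(1),$$
into this limit and solve for $\capac(B_r)$, which immediately gives $\capac(B_r) = r + \beta(r)/(2r) - m/2 + o(1)$ as claimed.

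There is no serious obstacle, since all of the genuine content has already been discharged: the volume asymptotics follow from Fan--Shi--Tam, the capacity upper bound used in Proposition \ref{prop_lower} comes from Bray--Miao, and the delicate matching lower bound on $\capac(B_r)$ is exactly Theorem \ref{thm_ub}. The one point worth emphasizing is the appearance of $\beta(r)/(2r)$ in the expansion of $\capac(B_r)$: a priori this term can be large, so it cannot simply be absorbed into $o(1)$; rather, it is the \emph{difference} $\bigl(\tfrac{3V(r)}{4\pi}\bigr)^{1/3} - \capac(B_r)$ that is controlled to leading order, and the $\beta(r)/(2r)$ terms in the two individual expansions cancel in forming this difference---which is the same cancellation already observed at the end of the proof of Proposition \ref{prop_lower}.
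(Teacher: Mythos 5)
Your argument is correct and is essentially identical to the paper's own proof: the paper likewise combines Theorem \ref{thm_lb} (via Proposition \ref{prop_lower}) with Theorem \ref{thm_ub} to conclude that the volume radius minus the capacity of $B_r$ tends to $m$, and then reads off $\capac(B_r)$ from the expansion \eqref{eqn_vol_estimate}. Your closing remark about the $\beta(r)/(2r)$ term not being absorbable into $o(1)$ is a correct and worthwhile observation, matching Remark \ref{rmk_beta} and the cancellation noted at the end of the proof of Proposition \ref{prop_lower}.
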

The proof follows from Theorems \ref{thm_lb} and \ref{thm_ub}, which together show the limit of the difference between the volume radius and the capacity of $B_r$ equals $m$ as $r \to \infty$. The rest then follows from the known expansion for the volume, equation \eqref{eqn_vol_estimate}, coming from \cite{FST}. As pointed out in Remark \ref{rmk_beta}, $\frac{\beta(r)}{2r} = m + o(1)$ in the harmonically flat case.

\subsection{Case of arbitrary exhaustions}
Unfortunately, it is not clear that $m_{CV} \leq m_{ADM}$ holds in general, i.e., allowing for arbitrary exhaustions of $M$. That is, there could a priori be an exhaustion for which the capacity-volume deficit is stricter than for an exhaustion by balls. This includes two main concerns: the optimal exhaustions could a priori feature regions that are not approximately round, and/or the regions could be far ``off center.'' (Recall these concerns will be eliminated if the metric is harmonically flat at infinity; see Theorem \ref{thm_HF}.) In this section we show two results in favor of this inequality for exhaustions that are not so symmetric.

First, we observe a corollary to Theorem \ref{thm_ub} above that still gives an upper bound on $m_{CV}$ in the case that the exhaustion is not quite by balls.
\begin{cor}
\label{cor_r_j}
Let $(M,g)$ be an asymptotically flat 3-manifold with nonnegative scalar curvature, with $\partial M$ is empty or minimal.  Let $\{K_j\}$ be an exhaustion of $M$ by compact sets with
$$\limsup_{j \to \infty}\left( \max r|_{\partial K_j} - \min r|_{\partial K_j}\right) \leq \alpha < \infty,$$
where $r$ is the radial coordinate in an AF coordinate chart. Then 
$$  \limsup_{j \to \infty} \left[ \left(\frac{3|K_j|}{4\pi}\right)^{1/3} - \capac(K_j)  \right] \leq m_{ADM}(M,g) + \alpha.$$
\end{cor}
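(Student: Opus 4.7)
The plan is to sandwich $K_j$ between two coordinate balls whose radii differ by essentially $\alpha$, and reduce to Theorem~\ref{thm_ub}. Let $\rho_j = \min r|_{\partial K_j}$ and $R_j = \max r|_{\partial K_j}$; the hypothesis gives $\limsup_j (R_j - \rho_j) \leq \alpha$, and the fact that $\{K_j\}$ exhausts $M$ forces $\rho_j \to \infty$ (since $\partial K_j$ eventually escapes every compact set). I would then argue $B_{\rho_j} \subseteq K_j \subseteq B_{R_j}$ for $j$ large by a standard connectedness argument: the open set $\{r > R_j\}$ is connected, disjoint from $\partial K_j$, and contains points outside the compact $K_j$, so it lies in $M \setminus K_j$; and the interior of $B_{\rho_j}$ (together with the compact ``core'' where the AF chart is undefined) is connected, disjoint from $\partial K_j$, and meets $K_j$ for $j$ large, so it lies in $K_j$.

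Given these inclusions, monotonicity of volume and capacity yields
\begin{equation*}
\left(\frac{3|K_j|}{4\pi}\right)^{1/3} - \capac(K_j) \;\leq\; \left(\frac{3|B_{R_j}|}{4\pi}\right)^{1/3} - \capac(B_{\rho_j}).
\end{equation*}
I would then split the right-hand side by inserting the volume radius of $B_{\rho_j}$:
\begin{equation*}
\left[\left(\tfrac{3|B_{R_j}|}{4\pi}\right)^{1/3} - \left(\tfrac{3|B_{\rho_j}|}{4\pi}\right)^{1/3}\right] + \left[\left(\tfrac{3|B_{\rho_j}|}{4\pi}\right)^{1/3} - \capac(B_{\rho_j})\right].
\end{equation*}
Theorem~\ref{thm_ub} applied along the sequence $\rho_j \to \infty$ bounds the $\limsup$ of the second bracket by $m_{ADM}(M,g)$.

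For the first bracket, I would use that the volume radius $f(r) := (3V(r)/(4\pi))^{1/3}$ has derivative
$f'(r) = A(r)\bigl(36\pi V(r)^2\bigr)^{-1/3}$, which tends to $1$ as $r \to \infty$ by the asymptotic flatness expansions $V(r) = \tfrac{4\pi}{3}r^3 + O(r^{3-\tau})$ and $A(r) = 4\pi r^2 + O(r^{2-\tau})$ (both already used in the proof of Proposition~\ref{prop_lower}). The fundamental theorem of calculus then gives
\begin{equation*}
f(R_j) - f(\rho_j) \leq (R_j - \rho_j)\sup_{r \in [\rho_j, R_j]} f'(r) = (R_j - \rho_j)(1 + o(1)),
\end{equation*}
whose $\limsup$ is at most $\alpha$ since $R_j - \rho_j$ stays bounded. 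Summing the two $\limsup$ bounds yields $m_{ADM}(M,g) + \alpha$, which is exactly the claim.

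The argument is essentially a monotonicity sandwich and I do not foresee a deep obstacle; the only slightly fiddly point is the topological sandwiching in Step~1. It is worth emphasizing that the particular decomposition above, pairing the two $B_{\rho_j}$-quantities together, is what makes things work: a more direct comparison of $\capac(B_{R_j}) - \capac(B_{\rho_j})$ via Corollary~\ref{cor_expansion} would encounter the potentially unbounded correction $\beta(r)/(2r)$, whose difference at $R_j$ and $\rho_j$ need not vanish in the limit outside the harmonically flat case.
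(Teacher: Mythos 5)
Your proof is correct and follows essentially the same route as the paper: sandwich $K_j$ between $B_{\rho_j}$ and $B_{R_j}$, use monotonicity of volume and capacity, and split the resulting bound into the capacity--volume deficit of $B_{\rho_j}$ (controlled by Theorem \ref{thm_ub}) plus the difference of volume radii of $B_{R_j}$ and $B_{\rho_j}$ (controlled by $\alpha$). The only difference is that you supply details the paper treats as immediate --- the connectedness argument for the inclusions, and the estimate that the volume radius function has derivative $1+O(r^{-\tau})$ --- and your derivative argument is in fact the right way to get the $\alpha$ bound when $\tau<1$, since the crude expansion ``volume radius $= r+O(r^{1-\tau})$'' alone would not suffice there.
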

\begin{proof}
Let $r_j =  \min r|_{\partial K_j}$ and $R_j =  \max r|_{\partial K_j}$, so $B_{r_j} \subseteq K_j \subseteq B_{R_j}$. Then by monotonicity of volume and capacity:
$$|B_{r_j}| \leq |K_j| \leq |B_{R_j}|$$
and
$$\capac(B_{r_j}) \leq \capac(K_j) \leq \capac(B_{R_j}).$$
Then
\begin{align*}
\left(\frac{3|K_j|}{4\pi}\right)^{1/3} - \capac(K_j)  &\leq \left[\left(\frac{|B_{r_j}|}{4\pi}\right)^{1/3} - \capac(B_{r_j})\right] + \left[\left(\frac{|B_{R_j}|}{4\pi}\right)^{1/3}  -\left(\frac{|B_{r_j}|}{4\pi}\right)^{1/3} \right].
\end{align*}
By Theorem \ref{thm_ub}, $\limsup_{j \to \infty}$ of the first bracketed term is at most $m_{ADM}(M,g)$. Using asymptotic flatness, it is straightforward to show $\limsup_{j \to \infty}$ of the second term is at most $\alpha$ (see \eqref{vol}).
\end{proof}

Second, we show that if an exhaustion is competitive for $m_{CV}$ then it must be becoming more symmetric at infinity in a precise volumetric sense. This will use a quantitative version of the isocapacitary inequality due to Fusco, Maggi, and Pratelli \cite{FMP}.

\begin{definition}
The \emph{Fraenkel asymmetry} of a compact set $\Omega \subset \R^3$ is defined to be
$$\mathcal{A}(\Omega) = \inf_B \left\{\frac{|\Omega \triangle B|}{|B|} \; : \; B \text{ is a closed ball with the same volume as } \Omega \right\}.$$
\end{definition}
Now, consider an asymptotically flat 3-manifold $(M,g)$ with a fixed asymptotically flat coordinate chart on $M \setminus K_0$. If $K$ is any compact set containing $K_0$, we may definite $\mathcal{A}(K)$ in a natural way using the Euclidean metric in the coordinate chart (this is clarified in the proof below).

\begin{prop}
Let $\{K_j\}$ be an exhaustion of an AF 3-manifold $(M,g)$ by compact sets. Let $\mathcal{A}(K_j)$ be the sequence of corresponding Fraenkel asymmetries in a fixed AF coordinate chart, well-defined for $j$ sufficiently large. If
\begin{equation}
\label{eqn_A_K_j}
\liminf_{j \to \infty} \mathcal{A}(K_j) > 0,
\end{equation}
then
$$\limsup_{j \to \infty} \left[ \left(\frac{3|K_j|}{4\pi}\right)^{\frac{1}{3}} - \capac(K_j)\right] = -\infty.$$
In particular, for the purposes of showing ``$m_{CV}(M,g) \leq m_{ADM}(M,g)$'' we may without loss of generality consider only exhaustions for which $\displaystyle\lim_{j \to \infty} \mathcal{A}(K_j) = 0$.
\end{prop}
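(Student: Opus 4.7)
The strategy is to apply the quantitative isocapacitary inequality of Fusco, Maggi, and Pratelli~\cite{FMP} in Euclidean space, then transfer it to $(M,g)$ through the AF chart. Fix $\Phi\colon M \setminus K \to \R^3 \setminus B$ as in Definition~\ref{def_AF}. For $j$ large so that $K_j \supset K$, identify $K_j$ with the Euclidean compact set $\tilde K_j := \Phi(K_j \setminus K) \cup B \subset \R^3$, and set $\mathcal{A}(K_j) := \mathcal{A}(\tilde K_j)$. The F--M--P inequality furnishes universal constants $C_0, \alpha > 0$ such that
\begin{equation*}
\capac_0(\tilde K_j) \;\geq\; \left(\frac{3|\tilde K_j|_0}{4\pi}\right)^{1/3}\!\bigl(1 + C_0\, \mathcal{A}(\tilde K_j)^{\alpha}\bigr),
\end{equation*}
where the ``$0$'' subscripts denote Euclidean quantities in the chart.

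Setting $R_j := (3|K_j|_g / 4\pi)^{1/3} \to \infty$, the next step is to establish the comparisons
\begin{equation*}
\left(\frac{3|K_j|_g}{4\pi}\right)^{1/3} = \left(\frac{3|\tilde K_j|_0}{4\pi}\right)^{1/3} + O(R_j^{1-\tau}), \qquad
\capac_g(K_j) = \capac_0(\tilde K_j)\bigl(1 + O(R_j^{-\tau})\bigr).
\end{equation*}
The volume comparison follows from $\sqrt{\det g_{ij}} = 1 + O(|x|^{-\tau})$ in the chart, since the radial integration of the $O(|x|^{-\tau})$ error over $\tilde K_j \setminus B$ is $O(R_j^{3-\tau})$, and the discrepancy $|K|_g - |B|_0$ is $O(1)$, both absorbed at the relevant order. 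The capacity comparison comes from using each of the $g$- and Euclidean capacitary potentials, extended by $0$ across the inner region, as an admissible test function for the other variational problem, together with the pointwise identity $|\nabla \psi|_g^2\,dV_g = (1+O(|x|^{-\tau}))\,|\nabla \psi|_0^2\,dV_0$ and the bound $\capac_g(K_j) = O(R_j)$ (as in Section~\ref{sec_lb}).

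Combining these with F--M--P yields
\begin{equation*}
\left(\frac{3|K_j|_g}{4\pi}\right)^{1/3} - \capac_g(K_j) \;\leq\; -\,C_0\, \mathcal{A}(\tilde K_j)^{\alpha}\, R_j + O(R_j^{1-\tau}).
\end{equation*}
Under hypothesis \eqref{eqn_A_K_j}, after passing to a subsequence we have $\mathcal{A}(\tilde K_j) \geq \delta > 0$, so the right side tends to $-\infty$, proving the first claim. The ``in particular'' assertion is then immediate: for any exhaustion, any subsequence with $\liminf \mathcal{A} > 0$ forces the deficit to $-\infty$ along it, so it is irrelevant to the $\limsup$ defining $m_{CV}$; the $\limsup$ must therefore be realized along a subsequence with $\mathcal{A}(K_j) \to 0$, which is itself an exhaustion.

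The main obstacle is the capacity comparison: one must verify that the extensions by $0$ across the inner region produce admissible Lipschitz test functions, and that the resulting Dirichlet energy error is uniformly $O(R_j^{-\tau})$ relative to $\capac_g(K_j) \sim R_j$. This is reasonable because each capacitary potential decays like $1/|x|$, so its Dirichlet energy concentrates near $\partial K_j$ where $|x| \gtrsim R_j$ and $g$ is nearly Euclidean; the fixed inner region contributes at most $O(1)$ to either energy and is readily absorbed. Any mild irregularity of $\partial \tilde K_j$ near $\partial B$ can be handled by a smoothing step as in Lemma~\ref{lemma_smooth}.
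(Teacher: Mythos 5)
Your argument is correct and follows essentially the same route as the paper: apply the Fusco--Maggi--Pratelli quantitative isocapacitary inequality in the chart and transfer volume and capacity between $g$ and $\delta$ via asymptotic flatness, so that the negative term $-C_0\,\mathcal{A}(K_j)^{\alpha}\,v_j$ dominates as $v_j \to \infty$. Two small caveats: for an arbitrary exhaustion the rates $O(R_j^{1-\tau})$ and $O(R_j^{-\tau})$ are not justified, since $M \setminus K_j$ need only lie in $\{|x| > r_j\}$ with $r_j = \min_{\partial K_j}|x|$ possibly growing much more slowly than $R_j$ (and $\tilde K_j$ need not sit inside a ball of radius comparable to $R_j$, though a rearrangement bound rescues the volume estimate) --- but the relative errors are still $o(1)$, which is all you need, and this is exactly what the paper's uniform-equivalence sandwich $(1+\epsilon_j)^{-2}\delta \leq g \leq (1+\epsilon_j)^{2}\delta$ with $\epsilon_j \to 0$ delivers; also, no passage to a subsequence is needed (or wanted, since you must control the $\limsup$): the hypothesis $\liminf_j \mathcal{A}(K_j) > 0$ gives $\mathcal{A}(K_j) \geq \delta$ for all large $j$ outright.
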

Note that although $\mathcal{A}(K)$ depends on the chart, the value of \eqref{eqn_A_K_j} does not.

\begin{proof}
First, we note that for uniformly equivalent Riemannian metrics,
$$\Lambda^{-2} g_2 \leq g_1 \leq  \Lambda^2 g_2,$$
on a 3-manifold, we have corresponding inequalities for the volume of a set
$$\Lambda^{-3} |K|_{g_2} \leq  |K|_{g_1} \leq  \Lambda^{3} |K|_{g_2}$$
as well as (if defined, e.g. for AF metrics) the capacity:
$$\Lambda^{-3} \capac_{g_2}(K) \leq  \capac_{g_1}(K) \leq  \Lambda^{3} \capac_{g_2}(K),$$
which can be seen from the co-area formula.

Now let $(M,g)$ be an AF 3-manifold. By Lemma \ref{lemma_end}, for the purposes of evaluating $m_{CV}$ we may without loss of generality assume that $M$ is diffeomorphic to $\R^3$ and that the standard $(x^1, x^2, x^3)$ coordinate system is an AF coordinate system for $g$. In particular, the Euclidean metric $\delta_{ij}$ is well-defined on $M$.

Let $\{K_j\}$ be an exhaustion of $M$ by compact sets. By asymptotic flatness we may find a sequence of positive real numbers $\epsilon_j$ converging to zero such that
$$(1+\epsilon_j)^{-2} \delta \leq g \leq (1+\epsilon_j)^2 \delta \qquad \text{ on }M \setminus K_j.$$
Let $v_j = \left(\frac{3|K_j|_0}{4\pi}\right)^{1/3}$ be the Euclidean volume radius of $K_j$. By the quantitative isocapactiary inequality given in \cite[Theorem 1.2]{FMP}
$$\frac{\capac_0(K_j)}{v_j} \geq 1 + c \mathcal{A}(K_j)^4$$
for a universal constant $c$.
Then:
\begin{align*}
 \left(\frac{3|K_j|_g}{4\pi}\right)^{\frac{1}{3}} - \capac_g(K_j) &\leq  (1+\epsilon_j)v_j - (1+\epsilon_j)^{-3}\capac_
0(K_j)\\
 &\leq v_j \left[1+\epsilon_j - (1+\epsilon_j)^{-3}(1+\mathcal{A}(K_j)^4) \right]
\end{align*}
Now, if $\displaystyle \liminf_{j \to \infty} \mathcal{A}(K_j) > 0$, the $\limsup$ of the above is $-\infty$, since $\epsilon_j \to 0$ and $v_j \to \infty$.
\end{proof}
Unfortunately, this argument does not provide insight into the centering of the balls that minimize for $K_j$ in the definition of Fraenkel asymmetry when $K_j$ is an optimal sequence for $m_{CV}$.

\section{Upper bound on $m_{CV}$ under  harmonically flat asymptotics}
\label{sec_HF}
In this section we prove Theorem \ref{thm_HF}, i.e. $m_{CV} \leq m_{ADM}$ for metrics with nice asymptotics.
\begin{definition}
\label{def_HF}
An asymptotically flat Riemannian $3$-manifold $(M,g)$ is \emph{harmonically flat at infinity (HF)} if there exists a ``harmonically flat coordinate system'' $(x^1, x^2, x^3)$ on $M \setminus K$, i.e.
$$g_{ij} = U^{4} \delta_{ij},$$
on $M \setminus K$ for some function $U>0$, where $\Delta_0 U = 0$ and $U(x) \to 1$ as $|x| \to \infty$. (Here $\Delta_0$ is the Euclidean Laplacian on $\R^3$.)
\end{definition}
Note the Schwarzschild manifold is HF.

It is well known that the harmonic function $U$ appearing in Definition \ref{def_HF} admits an expansion at infinity of the form:
\begin{equation}
\label{eqn_U}
U(x) = 1 + \frac{m}{2|x|} + O (|x|^{-2}),
\end{equation}
where $m$ is the ADM mass of $g$. The fact that $\Delta_0 U=0$ implies that $g$ as above has zero scalar curvature outside of $K$. 

\begin{proof}[Proof of Theorem \ref{thm_HF}]
By Lemma \ref{lemma_end}, $m_{CV}(M,g)$ only depends on the geometry outside of a large compact set. Thus, we may assume without loss of generality that $M$ is diffeomorphic to $\R^3$ and use standard coordinates $(	x^1,x^2, x^3)$. Moreover, we may assume there exists a compact set $\Omega \subset \R^3$ containing the coordinate ball of radius 1, such that on $M \setminus \Omega$ we have
$$g_{ij} = U^{4} \delta_{ij}$$
for a $\delta$-harmonic function $U>0$ on $M \setminus \Omega$ that approaches 1 at infinity.

Let $K$ be any compact set that contains $\Omega$, with $\partial K$ smooth. The strategy of the proof is to compare the capacity and volume of $K$ each with respect to $\delta$ and $g$, then use the isocapacitary inequality.

For the capacity, we let $\varphi_0$ be the capacitary potential for $K$ with respect to $\delta$. Since $\varphi_0$ and $U>0$ are $\delta$-harmonic, the function $\varphi:=\varphi_0 U^{-1}$  on $M \setminus \mathring K$ is harmonic with respect to $g$, and moreover has the same boundary values as $\varphi_0$. In particular, $\varphi$ is the capacitary potential of $K$ in $(M,g)$. From this it is straightforward to show
\begin{equation}
\label{cap_g_delta}
\capac_g(K) = \frac{1}{4\pi} \lim_{r \to \infty}\int_{S_r} \nu(\varphi)dA = \capac_{0}(K) + \frac{m}{2}.
\end{equation}

Next, for the volume, we impose the assumption that the ADM mass $m$ is nonnegative. Let $B_{\rho_0}$ be a coordinate ball in $M \cong \R^3$ of radius $\rho_0$ sufficiently large so that $B_{\rho_0} \supset \Omega$. Let $K$ be any compact set that contains $B_{\rho_0}$, of sufficiently large $\delta$-volume so that the volume radius of $K$, i.e., $\rho_k= \left(\frac{3|K|_\delta}{4\pi}\right)^{1/3}$, is greater than $\rho_0$.

We proceed to find an upper bound on the $g$-volume of $K$ in terms of the Euclidean volume and the ADM mass.
We have (using $dx$ to denote $dx^1 dx^2 dx^3$)
\begin{align}
|K|_g &= |\Omega|_g + \int_{K \setminus \Omega} \sqrt{\det(g_{ij})} dx\nonumber\\
&=  |\Omega|_g + \int_{K \setminus \Omega} \left(1+ \frac{3m}{|x|} + W\right) dx\nonumber\\
&=  |K_\delta| + |\Omega|_g- |\Omega|_\delta  + \int_{K \setminus \Omega} \left(\frac{3m}{|x|} + W\right) dx, \label{eqn_K_g}
\end{align}
where $W$ is a continuous function on $M \setminus \Omega$ satisfying $|W(x)| \leq \frac{C}{|x|^2}$ for a constant $C>0$ depending only on $g$ and $\Omega$. We next estimate the integral involving $W$.

Note the coordinate ball $B_{\rho_k}$ about the origin contains $B_{\rho_0}$. Now,

\begin{align*}
\int_{K \setminus \Omega} W  dx &\leq C \int_{K \setminus B_{\rho_0}} \frac{1}{|x|^2} dx + C  \int_{B_{\rho_0} \setminus \Omega} \frac{1}{|x|^2} dx\\
&\leq \frac{C}{\rho_0} \int_{K \setminus B_{\rho_0}} \frac{1}{|x|} dx + C  |B_{\rho_0}|_\delta\\
&\leq \frac{C}{\rho_0} \int_{K \setminus \Omega} \frac{1}{|x|} dx + C  |B_{\rho_0}|_\delta
\end{align*}
since $\Omega \supset B_1$. Continuing \eqref{eqn_K_g} we have
\begin{align*}
|K|_g &\leq |K|_\delta + |\Omega|_g - |\Omega|_\delta + C|B_{\rho_0}| + \int_{K \setminus \Omega} \left(\frac{3m + C/\rho_0}{|x|} \right) dx.
\end{align*}
For this integral, we take advantage of the fact that the integrand is rotationally symmetric and decreasing as a function of the radius, as $m \geq 0$. In particular, since $B_{\rho_k}$ by construction has the same Euclidean volume as $K$, and since $B_{\rho_k}$ and $K$ contain $\Omega$, we claim
$$\int_{K \setminus \Omega} \left(\frac{3m + C/\rho_0}{|x|} \right) dx \leq \int_{B_{\rho_k} \setminus \Omega} \left(\frac{3m + C/\rho_0}{|x|} \right) dx.$$
This can be verified readily; let $f(r)$ stand for the integrand. Since $|K|_\delta = |B_{\rho_k}|_\delta$, we have $|K \setminus B_{\rho_k}|_\delta = |B_{\rho_k} \setminus K|_\delta$. Then
\begin{align*}
\int_{ K \setminus \Omega} f(r) dx &= \int_{ (K \cap B_{\rho_k}) \setminus \Omega} f(r) dx + \int_{ K \setminus B_{\rho_k}} f(r) dx\\
&\leq \int_{ (K \cap B_{\rho_k}) \setminus \Omega} f(r) dx + f(\rho_k)\int_{ K \setminus B_{\rho_k}}  dx\\
&= \int_{ (K \cap B_{\rho_k}) \setminus \Omega} f(r) dx + f(\rho_k)|B_{\rho_k} \setminus K|\\
&\leq \int_{ (K \cap B_{\rho_k}) \setminus \Omega} f(r) dx + \int_{B_{\rho_k} \setminus K} f(r) dx,
\end{align*}
which proves the claim.

Continuing on,
\begin{align*}
|K|_g &\leq |K|_\delta + |\Omega|_g - |\Omega|_\delta + C|B_{\rho_0}| + \int_{B_{\rho_k} \setminus B_{\rho_0}} \left(\frac{3m + C/\rho_0}{|x|} \right) dx + \int_{B_{\rho_0} \setminus \Omega} \left(\frac{3m + C/\rho_0}{|x|} \right) dx.
\end{align*}
The first integral is easily evaluated; for the second we can bound above by $(3m + C/\rho_0) |B_{\rho_0}|$. Doing so (and using the definition of $\rho_k$), we obtain:
\begin{align*}
|K|_g &\leq |K|_\delta + |\Omega|_g - |\Omega|_\delta + (C +3m + C/\rho_0)|B_{\rho_0}| + 2\pi (3m + C/\rho_0) \left(\frac{3|K|_{\delta}}{4\pi}\right)^{2/3}.
\end{align*}
Then for a constant $\Lambda$ independent of $K$ (depending on $C$, $m$, $\Omega$, and $\rho_0$), we have: 
\begin{align*}
|K|_g &\leq |K|_\delta \left[1 + \Lambda |K_{\delta}|^{-1} + 2\pi (3m + C/\rho_0) \left(\frac{3}{4\pi}\right)^{2/3} |K|_{\delta}^{-1/3}\right],
\end{align*}
so
\begin{align*}
|K|_g^{1/3} &\leq |K|_\delta^{1/3} \left[1  + \frac{2\pi}{3} (3m + C/\rho_0) \left(\frac{3}{4\pi}\right)^{2/3} |K|_{\delta}^{-1/3} + O(|K|_\delta^{-2/3})\right].
\end{align*}
In particular, we may estimate the volume radius of $K$ with respect to $g$:
\begin{align*}
\left(\frac{3|K|_g}{4\pi}\right)^{1/3} &\leq \left(\frac{3|K|_\delta}{4\pi}\right)^{1/3}  +  \frac{3m}{2} + \frac{C}{2\rho_0} + O(|K|_\delta^{-1/3}).
\end{align*}
Using this inequality along with \eqref{cap_g_delta} and the Euclidean isocapacitary inequality, we have
\begin{align*}
\left(\frac{3|K|_g}{4\pi} \right)^{1/3} - \capac_g(K) &\leq m + \frac{C}{2\rho_0} + O(|K|_{\delta}^{-1/3}).
\end{align*}

In particular, if $K_{j}$ is an exhaustion of $\R^3$ by compact sets with smooth boundary, then for $j$ sufficiently large, the hypotheses on $K$ are met (that is, $K_j$ contains $B_{\rho_0}$ and $|K_j|_{\delta}$ grows arbitrarily large). Then
$$\limsup_{j \to \infty} \left[\left(\frac{3|K_j |_g}{4\pi} \right)^{1/3} - \capac_g(K_j)\right] \leq m + \frac{C}{2\rho_0}.$$
Since $\rho_0$ can be arbitrarily large, this $\limsup$ is at most $m$. In particular, $m_{CV} \leq m_{ADM}$, having used Lemma \ref{lemma_smooth}(b).
\end{proof}

\section{Capacity-volume mass in lower regularity}
\label{sec_low_reg}
A major source of motivation for studying the isocapacitary mass is the desire to generalize the ADM mass to lower regularity settings. The discussion below includes some indication that capacity behaves more favorably than area in low regularity.

We first recall that the capacity continues to be well-defined even if $(M,g)$ is merely \emph{$C^0$ asymptotically flat}, i.e. if $M$ is smooth, $g$ is a $C^0$ Riemannian metric on $M$, and outside of a compact set we have coordinates $(x^1,x^2,x^3)$ so that
\begin{equation}
\label{eqn_C0_AF}
g_{ij} = \delta_{ij} + O(|x|^{-q})
\end{equation}
for some $q > 0$ \cite{JL1}. In particular, Definition \ref{def_mCV} of $m_{CV}$ continues to make sense for such spaces, just as Huisken's isoperimetric mass does. A natural question is whether $m_{iso}$ and $m_{CV}$ necessarily agree for $C^0$ AF metrics of positive mass, i.e. without assuming smoothness, but we do not pursue this here.

We point out it is not difficult to show that if $M$ is a smooth manifold equipped with a sequence of $C^0$ AF metrics $\{g_i\}$ that converge locally uniformly to a $C^0$ AF metric $g$, then for any fixed compact set $K \subset M$,
\begin{equation}
\label{eqn_USC}
\capac_g(K) \geq \limsup_{i \to \infty} \capac_{g_i}(K),
\end{equation}
and of course $|K|_{g_i} \to |K|_g$. 
In particular, the quasi-local quantity $\left( \frac{3|K|}{4\pi}\right)^{1/3} - \capac(K)$ is lower semicontinuous under such convergence for a fixed $K$. Thus any jump in its value ``goes the right way'' for trying to prove lower semicontinuity of mass as in \cites{J_LSC,JL1, JL2}. We also expect this to continue to hold for weaker notions of convergence, particularly Sormani--Wenger intrinsic flat volume ($\mathcal{VF}$) convergence \cite{SW}. Note that $\mathcal{VF}$ convergence was used in \cite{JL2}, and it required significant work to address the fact that the isoperimetric mass expression
$$\frac{2}{|\partial K|}\left(|K| - \frac{1}{6\sqrt{\pi}}|\partial K|^{3/2}\right).$$
is \emph{not} always well behaved under $\mathcal{VF}$; that is, the perimeter can drop in a limit, which ``goes the wrong way'' for proving lower semicontinuity of mass. In other words, there is reason to believe the capacity-volume mass may be better behaved than the isoperimetric mass under low regularity convergence.

It is also worth pointing out that volume and capacity share a basic property that perimeter lacks: monotonicity under set inclusion. This was useful, for example, in Corollary \ref{cor_r_j}.

Next, we discuss $L^p$ convergence of Riemannian metrics, restricting to limits that are continuous to avoid technical difficulties of working with almost-everywhere defined metrics. We refer the reader to Allen and Sormani's recent paper that includes results and examples relating $L^p$ convergence of metric tensors to Gromov--Hausdorff and Sormani--Wenger intrinsic flat  convergence \cite{AS}.

Fix a $C^0$ AF Riemannian 3-manifold $(M,g)$, and let $\{g_i\}$ be a sequence of $C^0$ AF metrics on $M$. Of course, for a compact set $K \subset M$, the capacity and volume of $K$ are well-defined with respect to $g$ and $g_i$.

\begin{prop}
If $g_i \to g$ in $L^p_{\text{\emph{loc}}}(M)$ for $p \geq 3$ and $g_i \geq g$ 
pointwise, then
$$\lim_{i \to \infty} |K|_{g_i} = |K|_g$$
and
$$\limsup_{i \to \infty} \capac_{g_i}(K) \leq \capac_g(K),$$
i.e. the volume and capacity are continuous and upper semicontinuous, respectively.
\end{prop}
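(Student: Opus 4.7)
The plan is to work in local coordinates on a compact region containing $K$, exploiting both the $L^3_{\text{loc}}$ convergence and the pointwise bound $g_i \geq g$. The key algebraic observation is that $g_i \geq g$ as quadratic forms implies the reverse inequality $g_i^{jk} \leq g^{jk}$ for the inverses, so that $|\nabla \psi|_{g_i}^2 \leq |\nabla \psi|_g^2$ pointwise for every $\psi$, while simultaneously $\det g_i \geq \det g > 0$ on $K$. These one-sided pointwise controls pair with the $L^3$ hypothesis, whose cubic exponent is forced by $\det g$ being a cubic polynomial in the entries of $g$ in dimension three.

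For the volume, I would write $|K|_{g_i} = \int_K \sqrt{\det g_i}\, dx$ in coordinates. Along any subsequence on which $g_i \to g$ almost everywhere (furnished by $L^3$-convergence), $\sqrt{\det g_i} \to \sqrt{\det g}$ almost everywhere. The pointwise bound $\det g_i \leq C|g_i|^3$ together with the $L^3$-bound on $|g_i|$ yields a uniform $L^2(K)$-bound on $\sqrt{\det g_i}$, hence uniform integrability. By Vitali's convergence theorem, $\sqrt{\det g_i} \to \sqrt{\det g}$ in $L^1(K)$, and integrating gives $|K|_{g_i} \to |K|_g$. Since every subsequence has a further subsequence producing this same limit, the full sequence of volumes converges.

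For the capacity, fix $\epsilon > 0$ and construct a locally Lipschitz test function $\psi$ for $\capac_g(K)$ with
\[
\frac{1}{4\pi}\int_M |\nabla \psi|_g^2\, dV_g \leq \capac_g(K) + \epsilon
\]
that equals $1$ outside a large coordinate ball, so that $\nabla \psi$ is supported on some compact $\Omega \subset M \setminus K$. Such a $\psi$ is obtained by cutting off $1-\varphi$ at a large coordinate sphere, where $\varphi$ is the $g$-capacitary potential of $K$; the cutoff error vanishes in the limit by the standard $1/|x|$ decay of $1-\varphi$. Using $\psi$ as a competitor for $\capac_{g_i}(K)$, the integrand $g_i^{jk}\partial_j\psi\, \partial_k \psi \sqrt{\det g_i}$ is pointwise bounded by $|\nabla \psi|_g^2 \sqrt{\det g_i} \leq C \sqrt{\det g_i}$ on $\Omega$, which is uniformly integrable by the $L^2$-bound established above. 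Vitali combined with a.e.\ convergence then yields $\int_\Omega |\nabla \psi|_{g_i}^2\, dV_{g_i} \to \int_\Omega |\nabla \psi|_g^2\, dV_g$, and subsequence-passing promotes this to the full sequence. Hence $\limsup_i \capac_{g_i}(K) \leq \capac_g(K) + \epsilon$, and letting $\epsilon \to 0$ concludes.

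The main obstacle is that under only $L^p_{\text{loc}}$ convergence, the pointwise values of $g_i$ could spike on small sets so that $|\nabla \psi|_{g_i}^2$ becomes unboundedly large at points, obstructing uniform integrability; the pointwise hypothesis $g_i \geq g$ is precisely what neutralizes this by furnishing the uniform upper bound $|\nabla \psi|_{g_i}^2 \leq |\nabla \psi|_g^2$ on $\Omega$. I expect the most delicate step to be constructing the cutoff approximation of the capacitary potential with near-optimal $g$-energy, but this is standard from the known $1/|x|$ asymptotics of capacitary potentials on AF manifolds.
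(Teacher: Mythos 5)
Your overall strategy is essentially the paper's: both proofs rest on the AM--GM (trace--determinant) bound $\sqrt{\det g_i}\le 3^{-3/2}|g_i|_g^3$ relating the volume density to the $L^3$ quantity, on the matrix monotonicity $g_i\ge g \Rightarrow g_i^{jk}\le g^{jk}$ giving $|\nabla\psi|_{g_i}\le|\nabla\psi|_g$, and on a near-optimal test function with compactly supported differential. The one real difference is that you route the limits through a.e.\ subsequential convergence plus Vitali, whereas the paper avoids Vitali entirely: it gets the volume by a squeeze (upper bound from $\int_K|g_i|^3\,dV_g\to\int_K|g|^3\,dV_g=3^{3/2}|K|_g$, lower bound directly from $g_i\ge g$), and the capacity by the chain $\int|\nabla\phi|_{g_i}^2\,dV_{g_i}\le 3^{-3/2}\int|\nabla\phi|_g^2\,|g_i|^3\,dV_g$, whose right side converges because $|\nabla\phi|^2$ is bounded and $|g_i|^3\to|g|^3$ in $L^1$. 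The paper's route is shorter; yours, once repaired, shows the volume statement does not actually need the pointwise hypothesis.

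There is, however, a genuine error in your justification of uniform integrability. You claim a uniform $L^2(K)$ bound on $\sqrt{\det g_i}$ from ``$\det g_i\le C|g_i|^3$'' and the $L^3$ bound. The correct pointwise inequality is $\det g_i\le \tfrac{1}{27}|g_i|_g^6$ (equivalently $\sqrt{\det g_i}\le 3^{-3/2}|g_i|_g^3$), so $\int_K(\sqrt{\det g_i})^2\le C\int_K|g_i|^6$, and an $L^2$ bound on $\sqrt{\det g_i}$ would require a uniform $L^6$ bound on $|g_i|$ --- which $p\ge 3$ does not supply. The exponent $p=3$ is exactly critical here: it gives $\sqrt{\det g_i}$ a uniform $L^1$ bound only, and a uniform $L^1$ bound does not imply uniform integrability, so as written the Vitali step (and the identical step in the capacity argument, where you dominate the energy density by $C\sqrt{\det g_i}$) does not follow. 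The fix is short: since $0\le\sqrt{\det g_i}\le 3^{-3/2}|g_i|_g^3$ and $|g_i|_g^3\to|g|_g^3$ in $L^1(K)$ (which does follow from $L^3$ convergence), the sequence $\{\sqrt{\det g_i}\}$ is dominated by an $L^1$-convergent sequence and is therefore uniformly integrable; with that substitution both of your Vitali applications, and hence the rest of the argument, go through.
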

The first claim was proved in \cite[Theorem 4.3]{AS}. We remark that the condition $g_i \geq g$ alone does not imply an inequality between $\capac_{g_i}(K)$ and $\capac_g(K)$.
\begin{proof}
First, we recall how the $L^p$ norm is defined for Riemannian metrics, following \cite{AS}. Let $\lambda_1^2 \geq \lambda_2^2 \geq \lambda_3^2>0$ denote the eigenvalues of $g_i$ with respect to $g$ at a point. Let $|g_i|_g = \sqrt{\lambda_1^2 + \lambda_2^2 + \lambda_3^2}$, and, for a compact set $C \subset M$
$$\|g_i\|_{L^p_g(C)} = \left(\int_C |g_i|_g^p dV_g\right)^{\frac{1}{p}}.$$
In particular, if $g_i \to g$ in $L^p_{\text{loc}}$ for $p \geq 3$, then
$$\|g_i\|_{L^3_g(C)} \to \|g\|_{L^3_g(C)} = \sqrt{3} \left(|C|_g\right)^{\frac{1}{3}}.$$

We recall the proof from \cite[Theorem 4.3]{AS} of the first claim, which follows from the trace-determinant inequality.  Letting $F_i$ be the product of the eigenvalues of $g_i$ with respect to $g$, we have
\begin{align*}
|K|_{g_i} &= \int_K \sqrt{F_i} dV_g\\
&\leq \frac{1}{3^{3/2}} \int_K |g_i|^3 dV_{g}
\end{align*}
which converges to $|K|_g$. The reverse inequality follows from the pointwise condition on $g_i$. 
 
For the second claim, begin with a test function $\phi$ for $\capac_g(K)$ that comes within $\epsilon$ of the infimum. By a cutoff argument, we may assume $d\phi$ has compact support, call it $C$. By the pointwise lower bound on $g_i$, we have the upper bound $ |\nabla \phi|_{g_i} \leq |\nabla \phi|_{g}$. Then:
\begin{align*}
4\pi\capac_{g_i}(K) &\leq \int_M |\nabla \phi|^2_{g_i} dV_{g_i}\\
&= \int_C |\nabla \phi|^2_{g_i} \sqrt{F_i} dV_g\\
&\leq \int_C |\nabla \phi|^2 \frac{1}{3^{3/2}}|g_i|^3 dV_g.
\end{align*}
Since $\phi$ is Lipschitz on $C$ we see this integral converges to $\int_C |\nabla \phi|^2  dV_g$. Thus,
\begin{align*}
\limsup_{i \to \infty} \capac_{g_i}(K) &\leq \frac{1}{4\pi} \int_C |\nabla \phi|^2   dV_g\\
&\leq \capac_g(K) + \epsilon.
\end{align*}
Since $\epsilon$ was arbitrary, the proof is complete.
\end{proof}
By contrast, under the hypotheses in the proposition, the perimeter can ``jump down,'' as simple conformal examples where the $g_i$ are much larger than $g$ on a thin shell around $\partial K$ show. Again, this behavior ``goes the wrong way.''

In future work, we will extend the definition of $m_{CV}$ to a class of ``asymptotically flat'' metric measure spaces that lack Riemannian structure.

\section{Some conjectures}
\label{sec_conj}
The main question left unanswered here is whether $m_{CV} \leq m_{ADM}$ holds in full generality, i.e. under the hypotheses of Theorem \ref{thm_ub}. Below, we discuss one approach to attacking this and propose some conjectures that may have independent interest.

Consider a bounded region $\Omega$ in an asymptotically flat 3-manifold $(M,g)$, with $S= \partial \Omega$ smooth. Then $\Omega$ is a critical point for capacity among nearby regions of the same volume if and only if the capacitary potential $\varphi$ for $\Omega$ has constant normal derivative on $S$:
$$\left.\frac{\partial\varphi}{\partial \nu}\right|_S = \text{const.}$$
Such surfaces arise as solutions to D. Bernoulli's free boundary problem; typically this problem involves one fixed boundary, and the free (interior or exterior) boundary is sought to make the above equation satisfied for a harmonic function with Dirichlet boundary values of 0 and 1. We refer the reader to \cite{FR} for example, which includes many references and several interpretations of such surfaces from classical physics. 

In our case, the fixed boundary is at infinity in the asymptotically flat end. For convenience, we give:
\begin{definition}
Let $(M,g)$ be an asymptotically flat 3-manifold. A surface $S= \partial A$ for a compact set $A$ is a \emph{Bernoulli surface} (relative to infinity) if there exists a harmonic function $u$ on $M \setminus A$, approaching 1 at infinity, equaling zero on $S$, and such that $\frac{\partial u}{\partial \nu}$ is constant on $S$ (where $S$ is the unit normal pointing into $M \setminus A$. 
\end{definition}
Of course, the Bernoulli condition (unlike constant mean curvature, for example) is non-local. Examples of Bernoulli surfaces include spheres in Euclidean space and $r=$ constant coordinate spheres in rotationally symmetric asymptotically flat manifolds.

The following conjectures are inspired by the analogous results for constant mean curvature surfaces and large isoperimetric regions. First, recall that an AF 3-manifold with nonnegative scalar curvature and positive ADM mass admits a unique foliation on the complement of a compact set by stable CMC surfaces. This problem began with Huisken and Yau \cite{HY} and culminated in an optimal result by Nerz \cite{Ner}. We refer the reader to Appendix A of \cite{CESY} and to \cite{Ner} for additional references and precise statements.

\begin{conj}
\label{conj1}
Let $(M,g)$ be an asymptotically flat 3-manifold with nonnegative scalar curvature and positive ADM mass. Then there exists a compact set $K$ such that $M \setminus K$ is foliated by Bernoulli surfaces of spherical topology.
\end{conj}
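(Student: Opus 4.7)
The approach I would take is a perturbative construction modeled on the Huisken--Yau argument for the CMC foliation \cite{HY}, replacing the mean curvature operator by the (nonlocal) Bernoulli operator
\[
\mathcal{B}(S) := \left.\frac{\partial u_S}{\partial \nu}\right|_{S} - \frac{1}{|S|}\int_{S} \frac{\partial u_S}{\partial \nu}\, dA,
\]
where $u_S$ is the harmonic function on the exterior of $S$ vanishing on $S$ and tending to $1$ at infinity. A surface is Bernoulli precisely when $\mathcal{B}(S) \equiv 0$. The plan is to parametrize candidate leaves as normal graphs over large coordinate spheres, set up $\mathcal{B}$ as a smooth map between appropriate Banach spaces, and invoke the implicit function theorem after identifying and modding out the obvious degeneracies.

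Concretely, I would fix an AF chart and write candidate surfaces as
\[
S_{\rho, a, f} = \bigl\{\, \rho\omega + a + \rho f(\omega)\omega : \omega \in S^{2}\,\bigr\},
\]
with $\rho \gg 1$, translation parameter $a \in \R^{3}$, and $f \in C^{2,\alpha}(S^{2})$ belonging to the subspace orthogonal (in $L^{2}(S^{2})$) to the $\ell = 0$ and $\ell = 1$ spherical harmonics. These exclusions are forced: constants correspond to a redefinition of $\rho$, and $\ell=1$ modes to infinitesimal translations, both of which lie in the kernel of $\mathcal{B}$ on Euclidean spheres. After rescaling by $\rho$, one studies the map $(a/\rho, f) \mapsto \rho^{2}\mathcal{B}(S_{\rho,a,f})$, which is a smooth map of Banach spaces depending on the parameter $\rho^{-1}$. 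At $\rho=\infty$ (i.e.\ the Euclidean sphere of radius $1$), the linearization in $f$ can be computed using a spherical harmonic expansion of the capacitary potential: writing $u_{f} = 1 - |x|^{-1} + \sum_{\ell,m} c_{\ell,m} Y_{\ell,m}(\omega)|x|^{-\ell-1}$ and matching boundary conditions to first order in $f$ gives $c_{\ell,m} = f_{\ell,m}$, from which one finds that $D_{f}\mathcal{B}$ acts diagonally by a strictly negative multiple of $(\ell + \text{const})$ on each $Y_{\ell,m}$ with $\ell \geq 2$. Thus the linearization is an isomorphism on the complementary subspace, and the implicit function theorem produces a unique Bernoulli surface $S_{\rho, a(\rho), f(\rho)}$ for each sufficiently large $\rho$, with estimates $|a(\rho)| = O(1)$ and $\|f(\rho)\|_{C^{2,\alpha}} \to 0$ as $\rho \to \infty$, using the decay of $g_{ij}-\delta_{ij}$.

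To upgrade existence of individual leaves to a foliation, I would differentiate in $\rho$ and show that the Jacobi-type field $\partial_{\rho}(\rho\omega + a(\rho) + \rho f(\rho)\omega)$ is transverse to the surfaces $S_{\rho,a(\rho),f(\rho)}$, i.e.\ its normal component is pointwise positive for large $\rho$. At leading order this normal component is $\omega \cdot \omega = 1$, so transversality holds after increasing $\rho_{0}$, which yields a local foliation; that the leaves cover the complement of a compact set follows because they asymptote to round spheres of radius $\rho$ centered at a bounded point. Uniqueness within a suitable class of asymptotically round surfaces, as in Nerz's refinement \cite{Ner}, would also follow from the invertibility of the linearization.

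The main obstacle is the nonlocality of $\mathcal{B}$: the normal derivative of $u_{S}$ depends on the geometry of the entire exterior region, which means that the linearization is a Dirichlet-to-Neumann type pseudodifferential operator rather than an elliptic differential operator, and perturbative estimates in weighted Hölder spaces on the AF end require care. A second subtle point, closely tied to this, is the selection of the translation parameter $a(\rho)$: the $\ell=1$ kernel in Euclidean space is lifted at finite $\rho$ by a term of order $\rho^{-\tau}$ coming from the asymptotic deviation of $g$ from $\delta$, so to solve for $a(\rho)$ one needs a quantitative nondegeneracy statement whose leading term is presumably related to the ADM center of mass; positivity of the ADM mass should be exactly what is needed to make this solvable for all large $\rho$. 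I would expect the resulting foliation leaves to be the natural candidates for isocapacitary regions in the companion conjecture.
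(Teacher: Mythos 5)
This statement is posed in the paper as an open conjecture: the author offers no proof, only the motivating analogy with the Huisken--Yau/Nerz CMC foliations and pointers to the Bernoulli free-boundary literature. So there is nothing in the paper to compare your argument against line by line; the only question is whether your sketch actually closes the problem. It does not --- it is a plausible program, essentially the one the analogy suggests, but the two steps you yourself flag as ``subtle'' are precisely where the entire difficulty of the conjecture lives, and they are not carried out.

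First, the linearization. Your model computation on the unit Euclidean sphere (matching the expansion $\sum c_{\ell,m}Y_{\ell,m}|x|^{-\ell-1}$ to the graph function) is fine and does show that the Dirichlet-to-Neumann--type linearization acts as multiplication by $\ell-1$ on the $\ell$-th eigenspace, hence is invertible on $\ell\geq 2$. But to run the implicit function theorem you need this invertibility \emph{uniformly in $\rho$} for the true operator on $(M,g)$, which requires uniform weighted estimates for exterior harmonic functions over the whole family of perturbed spheres --- an analogue of Lemma \ref{lemma_potential}, but for graphs over $S_\rho$ rather than for $S_\rho$ itself, with control of how the nonlocal remainder depends on $f$ and $a$. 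Nothing in your sketch supplies this, and it is not a routine adaptation: the operator is pseudodifferential of order one, not a quasilinear elliptic PDE as in the CMC case, so the standard Schauder framework of \cite{HY} does not transfer verbatim.

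Second, and more seriously, the $\ell=1$ degeneracy. On Euclidean spheres the translations are an exact kernel, so the equation for $a(\rho)$ is solved not by the implicit function theorem at the model but by extracting the first correction that breaks the symmetry and showing it is invertible --- this is exactly where $m_{ADM}>0$ enters in Huisken--Yau, via an expansion of the mean curvature to order $m/\rho^3$. You state that ``positivity of the ADM mass should be exactly what is needed,'' but you do not compute the corresponding term in the expansion of $\partial u/\partial\nu$, identify its sign, or show it dominates the error terms for $\tau>\frac12$ (where even in the CMC setting the centering problem required Nerz's refinement rather than the original Huisken--Yau argument). Without that computation there is no proof that the translation parameter can be solved for, and hence no leaves at all. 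The proposal is a reasonable research plan, and the foliation/transversality step at the end is the easy part once individual leaves exist, but as written it leaves the conjecture open.
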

Some results are already known regarding foliations by Bernoulli surfaces (relative to some fixed surface rather than infinity). We refer the reader to an early paper on this topic by Acker \cite{Ack} and a paper by Henrot and Onodera \cite{HO} for some recent results and many references.

Chodosh, Eichmair, Shi, and Yu showed that for $V>0$ sufficiently large, there is a unique isoperimetric region $\Omega_V$ with volume $V$ in an AF 3-manifold of nonnegative scalar curvature with $m_{ADM}>0$ (whose boundary is empty or else consists of minimal surfaces), and moreover $\partial \Omega_V$ consists of $\partial M$ and a leaf of the canonical foliation by CMC surfaces \cite[Theorem 1.1]{CESY}. By analogy, we conjecture:

\begin{conj}
\label{conj2}
Let $(M,g)$ be an asymptotically flat 3-manifold with nonnegative scalar curvature and positive ADM mass. Assume $\partial M$ is empty or minimal. Then for $V>0$ sufficiently large there exists a unique compact region containing $\partial M$ with volume $V$ whose capacity is minimized among such regions.
\end{conj}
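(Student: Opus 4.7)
The strategy is to model the argument on Chodosh--Eichmair--Shi--Yu's treatment of large isoperimetric regions \cite{CESY}, substituting capacity for perimeter throughout. The proof would split into three steps: existence of a minimizer for each large $V$, identification of its boundary as a Bernoulli surface, and uniqueness by comparison with the foliation of Conjecture \ref{conj1}.

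For existence, take a minimizing sequence $\{\Omega_i\}$ of compact regions with $\partial M \subset \Omega_i$, $|\Omega_i|=V$, and $\capac(\Omega_i)$ decreasing to the infimum. The main danger is that volume escapes to infinity, either uniformly or by splitting between a near piece and a far piece. A concentration--compactness argument should rule this out: setting $V_i^{\mathrm{in}} = |\Omega_i \cap B_R|$ and $V_i^{\mathrm{out}} = V - V_i^{\mathrm{in}}$ for a fixed large coordinate ball $B_R \supset \partial M$, the approximate additivity of capacity for widely separated compact sets in the asymptotically flat end, combined with the Euclidean isocapacitary inequality (Theorem \ref{thm_isocap}) and the strict concavity of $V \mapsto V^{1/3}$, forces splitting to be strictly suboptimal compared with consolidating outer volume into a single asymptotically round region. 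Once concentration is established, one extracts an $L^1$-limit of the characteristic functions, and lower semicontinuity of capacity under monotone or $L^1$ convergence (immediate from the Dirichlet-energy definition) yields a minimizer.

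Characterizing the minimizer is relatively standard: by Alt--Caffarelli-type regularity for the one-phase harmonic free boundary problem, $\partial \Omega$ is $C^{1,\alpha}$ (and smoother by bootstrapping) away from $\partial M$, and the first variation under volume-preserving deformations forces $|\nabla \varphi|_g = \mathrm{const}$ on $\partial \Omega \setminus \partial M$, where $\varphi$ is the capacitary potential of $\Omega$; that is, $\Omega$ is bounded by a Bernoulli surface relative to infinity. Where $\partial \Omega$ meets the minimal boundary $\partial M$, no additional Euler--Lagrange content is required.

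For uniqueness, assume Conjecture \ref{conj1} and let $\{\Sigma_s\}$ be the Bernoulli foliation outside a compact set, parametrized so that the enclosed volume $V(s)$ is strictly increasing. For $V$ sufficiently large, let $\Omega^{\star}_V$ be the region bounded by $\partial M$ and the leaf with $V(s)=V$; this is the candidate minimizer. To show any minimizer $\Omega$ agrees with $\Omega^{\star}_V$, one would combine the asymptotic estimates developed in the proof of Theorem \ref{thm_ub} with a quantitative isocapacitary deficit bound --- for example, the Fusco--Maggi--Pratelli inequality \cite{FMP} --- to force the Fraenkel asymmetry of $\partial \Omega$ to be small, and then invoke uniqueness of the Bernoulli foliation among spherical-topology surfaces in the AF end. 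The main obstacle is precisely this last step: because the Bernoulli condition is genuinely \emph{nonlocal} (the normal derivative of $\varphi$ on $\partial \Omega$ depends on the geometry of all of $M \setminus \Omega$), the Alexandrov-type reflection techniques used for CMC uniqueness do not transfer directly, and one likely needs either a capacitary analog of the Hawking-mass monotonicity under inverse mean curvature flow or a new quantitative stability theorem for Bernoulli regions in asymptotically flat 3-manifolds.
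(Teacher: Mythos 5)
The statement you are attempting to prove is stated in the paper as Conjecture \ref{conj2}; the paper offers no proof of it, explicitly presenting it as an open problem by analogy with the isoperimetric results of \cite{CESY}. Your proposal is therefore not being measured against an existing argument, and on its own terms it is a research program rather than a proof. The decisive gap is the one you yourself flag at the end: the uniqueness step is conditional on Conjecture \ref{conj1} (itself unproven), and even granting that foliation you concede that the nonlocality of the Bernoulli condition blocks the Alexandrov-type or stability arguments that make the CMC/isoperimetric story work, so that ``one likely needs \ldots a new quantitative stability theorem.'' A proof cannot end with the identification of the missing theorem. Note also that the paper's own closing remark after Conjecture \ref{conj2} anticipates exactly your strategy (the minimizer's boundary should be a leaf of the foliation of Conjecture \ref{conj1}), so you have reconstructed the intended program, not closed it.

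Beyond the admitted gap, two of the steps you describe as routine are not. First, in the existence step, lower semicontinuity of capacity under $L^1$ convergence of characteristic functions is not ``immediate from the Dirichlet-energy definition'': capacity is not a function of the $L^1$ equivalence class of a compact set (sets of measure zero can carry positive capacity in dimension $3$), so the minimizing sequence must be upgraded to convergence of the potentials (e.g.\ weak $W^{1,2}_{\mathrm{loc}}$ convergence plus control of the zero sets) before any semicontinuity can be extracted; the concentration--compactness dichotomy also needs a genuine strict subadditivity estimate, for which the approximate additivity of capacity for separated sets works \emph{against} you compared with the perimeter case. Second, the Alt--Caffarelli regularity theory applies to the penalized functional $\int|\nabla u|^2+\Lambda|\{u>0\}|$, whereas your problem carries a hard volume constraint on $\Omega=\{u=0\}$; passing from the constrained to the penalized problem (existence and positivity of the Lagrange multiplier, equivalence of minimizers) is a nontrivial step that must be justified before the Euler--Lagrange condition $|\nabla\varphi|=\mathrm{const}$ on $\partial\Omega$ can be asserted. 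These are fixable in principle, but as written the existence and regularity steps, like the uniqueness step, are sketches of what a proof would contain rather than arguments.
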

If both of these conjectures were true, it would then be expected that the boundary of the minimizer in Conjecture \ref{conj2} is the union of $\partial M$ and a leaf of the foliation in Conjecture \ref{conj1}. At that point, if the foliation were sufficiently asymptotically round, then the conjectured inequality $m_{CV} \leq m_{ADM}$ should follow, perhaps as in Corollary \ref{cor_r_j}.

\section{Appendix: facts regarding Huisken's isoperimetric mass}

\subsection{The error term in isoperimetric mass estimate}
The constant ``$C$'' appearing in Theorem \ref{thm_JL} was not made explicit in \cite{JL1}, though this is readily done below. Recall that in the proof of Theorem \ref{thm_ub} we required knowledge of the dependence of $C$ on $I_0$.
\begin{lemma}
\label{lemma_JL}
In Theorem \ref{thm_JL}, the constant $C$ can be chosen to be $\gamma_0 \mu_0^2(1 + I_0^2 c_0^{-3})$, where $\gamma_0$ is a universal constant.
\end{lemma}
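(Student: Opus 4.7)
The plan is to revisit the proof of Theorem \ref{thm_JL} from \cite{JL1} (the original Theorem 17 there) and carefully track the dependence of every universal constant that enters, with the goal of showing that they combine to produce the claimed form $\gamma_0 \mu_0^2 (1 + I_0^2 c_0^{-3})$. The original statement only asserted existence of $C = C(\mu_0, I_0, c_0)$, so the task is not to give a new proof but to audit the existing one.

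Concretely, I would first recall the overall scheme: the proof starts from an outward-minimizing $\Omega$ with $|\partial \Omega| \geq 36\pi \mu_0^2$, runs (weak) inverse mean curvature flow from $\partial\Omega$, and uses monotonicity of the Hawking mass $m_H(\Sigma_t)$ along the flow, combined with a modified monotone quantity that involves the volume enclosed. One then takes $t \to \infty$, where $m_H(\Sigma_t) \to m_{ADM}(M,g)$, to extract an estimate of the form
\[
\frac{2}{|\partial \Omega|}\left[|\Omega| - \frac{1}{6\sqrt{\pi}}|\partial\Omega|^{3/2}\right] \leq m_{ADM}(M,g) + (\text{boundary and enclosed-volume terms at }\partial\Omega).
\]
The hypothesis $|\partial\Omega|\geq 36\pi\mu_0^2$ and the convention on minimal/empty boundary ensure the flow exists and that the Hawking mass starts out non-negative after applying an area-radius correction.

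The next step is to localize the error: the correction terms depend on (i) the area of $\partial\Omega$ relative to $\mu_0^2$, which is where a factor of $\mu_0^2$ enters through the Geroch monotonicity plus the initial area, (ii) the ratio of volume to area $3/2$ appearing in the isoperimetric deficit, where any extra multiplicative slack is controlled by $I_0$, and (iii) conversions between area radius, volume radius, and capacity-like quantities that require the global isoperimetric constant $c_0$ to convert perimeter bounds into volume bounds and vice versa. Each such conversion brings in a power of $c_0^{-1}$; three such conversions give $c_0^{-3}$, while the $I_0^2$ factor arises from bounding, on the initial surface, a term of the form $|\partial\Omega|^{3}/|\Omega|^{2}$, which is exactly $(6\sqrt{\pi})^2 I(\Omega)^2$. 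Combined with the universal $\mu_0^2$ coming from the $\sqrt{|\partial\Omega|} \geq 6\sqrt{\pi}\mu_0$ bound used to clean up lower order terms, the composite constant becomes $\gamma_0\mu_0^2(1 + I_0^2 c_0^{-3})$.

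The main obstacle is bookkeeping rather than any new analytic input: one must verify that the parts of the argument that are quadratic in the isoperimetric ratio (i.e.\ coming from the deficit expansion) actually have no hidden inverse powers of $c_0$, and that the parts that are purely metric-dependent (coming from the global isoperimetric constant in $(M,g)$, used to bound perimeters below by volumes) do not acquire hidden $I_0$ factors. Once these independences are confirmed, splitting the error into the two additive contributions $\gamma_0 \mu_0^2$ and $\gamma_0 \mu_0^2 I_0^2 c_0^{-3}$ is immediate, and absorbing logarithmic or constant prefactors into a single universal $\gamma_0$ completes the lemma.
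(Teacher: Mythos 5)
Your overall strategy --- return to the proof of Theorem \ref{thm_JL} in \cite{JL1} and make the constants explicit --- is the same one the paper takes, but as written your argument stops at the level of a bookkeeping plan: the step you defer (``once these independences are confirmed\ldots'') is the entire content of the lemma, and the attributions you guess for the individual factors do not match how they actually arise. The paper's proof rests on two concrete quantitative inputs. First, from the proof of \cite[Lemma 10]{JL1}, the Schwarzschild volume-comparison function $\phi_m$ satisfies
\[
\frac{2}{A}\left(\phi_m(A) - \frac{A^{3/2}}{6\sqrt{\pi}}\right) \leq m + \frac{\gamma m^2}{\sqrt{A}}
\]
with $\gamma$ universal; combined with $m \leq \mu_0$ this is the source of the first $\mu_0^2/\sqrt{|\partial\Omega|}$ term. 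Second, \cite[eqn.~(13)]{JL1} already bounds $|\Omega|$ by $\phi_m(|\partial\Omega|)$ plus the \emph{single} additive volume correction $c_0^{-3}(36\pi\mu_0^2)^{3/2} I_0^2$; the factor $I_0^2 c_0^{-3}$ therefore enters in one step, not as a product of ``three conversions each costing $c_0^{-1}$'' as you suggest. Dividing by $|\partial\Omega|$ and invoking the hypothesis $|\partial\Omega| \geq 36\pi\mu_0^2$ converts $(36\pi\mu_0^2)^{3/2}/|\partial\Omega|$ into $(36\pi\mu_0^2)/\sqrt{|\partial\Omega|}$, which is exactly what produces the $\mu_0^2 I_0^2 c_0^{-3}/\sqrt{|\partial\Omega|}$ contribution.

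So the genuine gap is that the bound is never derived: you correctly anticipate that the error splits into a pure-$\mu_0^2$ piece and a $\mu_0^2 I_0^2 c_0^{-3}$ piece, but you neither identify the quantitative form of the Schwarzschild comparison error nor locate the one place in \cite{JL1} where $I_0$ and $c_0$ enter together. In particular, your proposed mechanism for the $c_0^{-3}$ (three separate perimeter-to-volume conversions) and for the $I_0^2$ (a $|\partial\Omega|^3/|\Omega|^2$ term on the initial surface) would need to be checked against the actual argument, and checking them is precisely the verification the lemma demands.
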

\begin{proof}
For $m \geq 0$, let $\phi_m(A):[16\pi m^2,\infty) \to [0,\infty)$ denote the volume bounded between the horizon and the rotationally symmetric sphere of area $A$ in the Schwarzschild manifold of mass $m$. (Note the area of the horizon is $16\pi m^2$.) Huisken's definition of isoperimetric mass was motivated by his observation that
$$\lim_{A \to \infty} \frac{2}{A} \left(\phi_m(A) - \frac{A^{3/2}}{6\sqrt{\pi}}\right) = m.$$
A proof of this appears in \cite[Lemma 10]{JL1}, where it is also observed that the limit is uniform among different values of $m$, given an upper bound on such $m$. An examination of this proof makes it clear that there exists a constant $\gamma$ independent of $m$ and $A$ such that
\begin{equation}
\label{phi_m}
 \frac{2}{A} \left(\phi_m(A) - \frac{A^{3/2}}{6\sqrt{\pi}}\right) \leq m + \frac{\gamma m^2}{\sqrt{A}}.
 \end{equation}

Now in \cite[eqn. (13)]{JL1}, the first inequality below is established for $\Omega$ satisfying the hypotheses of Theorem \ref{thm_JL}; we extend it using \eqref{phi_m}, as well as $m \leq \mu_0$, and  $|\partial \Omega| \geq 36\pi \mu_0^2$. Here $m$ is the ADM mass of $(M,g)$.
\begin{align*}
\frac{2}{|\partial \Omega|} \left( |\Omega| - \frac{|\partial \Omega|^{3/2}}{6\sqrt{\pi}}\right) &\leq \frac{2}{|\partial \Omega|}\left( \phi_m(|\partial \Omega|)-\frac{|\partial \Omega|^{3/2}}{6\sqrt{\pi}} + c_0^{-3} (36\pi \mu_0^2)^{3/2} I_0^2\right)\\
&\leq m + \frac{\gamma \mu_0^2}{\sqrt{|\partial \Omega|}} + \frac{ 2c_0^{-3} (36\pi \mu_0^2) I_0^2}{\sqrt{|\partial \Omega|}}
\end{align*}
which implies the claim. (Note the isoperimetric ratio in \cite{JL1} does not include the normalizing factor $\frac{1}{6\sqrt{\pi}}$ but this only affects the universal constant.)
\end{proof}

\subsection{Alternative formula for isoperimetric mass}
One might wonder whether Huisken's formula for the isoperimetric mass could be rewritten using the difference between the volume radius and the area radius, similar to expression \eqref{m_CV2} for $m_{CV}$. We briefly confirm this here.

\begin{prop}
\label{prop_alt}
Let $(M,g)$ be a $C^0$ asymptotically flat manifold (as in \eqref{eqn_C0_AF}).  Then:
\begin{equation}
\label{eqn_miso*}
m_{iso}(M,g) = 2 \sup_{\{K_j\}} \limsup_{j \to \infty} \left[\left(\frac{3|K_j|_g}{4\pi}\right)^{1/3} - \left(\frac{|\partial^*K_j|_g}{4\pi}\right)^{1/2} \right].
\end{equation}
\end{prop}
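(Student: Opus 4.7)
The plan is to mirror the strategy used in Lemma~\ref{lemma_mcv}: show that one may restrict the supremum to exhaustions whose isoperimetric ratio tends to the Euclidean optimum, and then observe that the two bracketed expressions agree asymptotically on such exhaustions.

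Introduce the volume radius $v_j = \left(\tfrac{3|K_j|_g}{4\pi}\right)^{1/3}$ and the area radius $a_j = \left(\tfrac{|\partial^* K_j|_g}{4\pi}\right)^{1/2}$. A direct calculation (using $\tfrac{1}{6\sqrt{\pi}}(4\pi)^{3/2} = \tfrac{4\pi}{3}$) rewrites the integrand in Huisken's definition as
\begin{equation*}
\frac{2}{|\partial^* K_j|_g}\left[|K_j|_g - \frac{1}{6\sqrt{\pi}}|\partial^* K_j|_g^{3/2}\right] = \frac{2}{3 a_j^2}(v_j - a_j)(v_j^2 + v_j a_j + a_j^2).
\end{equation*}
Once $v_j/a_j \to 1$, the factor $(v_j^2+v_j a_j + a_j^2)/(3a_j^2)$ tends to $1$, so this expression differs from $2(v_j-a_j)$ by a factor tending to $1$. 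Hence the two $\limsup$s agree on such exhaustions.

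The first main step is the upper bound $\limsup_{j} v_j/a_j \leq 1$ for \emph{any} exhaustion. For $C^0$-AF metrics, the uniform closeness of $g$ to $\delta$ on the end (plus the fact that $|K_j|_g \to \infty$ forces most of the volume and perimeter to lie in the AF end) gives $|\partial^* K_j|_g = (1+o(1))|\partial^* K_j|_0$ and $|K_j|_g = (1+o(1))|K_j|_0$, modulo a fixed bounded contribution. Combined with the classical Euclidean isoperimetric inequality $|\partial^* K_j|_0^{3} \geq 36\pi |K_j|_0^2$ (equivalently $a_j^{(0)}\geq v_j^{(0)}$), this yields $a_j \geq v_j(1-o(1))$.

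The second main step is the production of an improved exhaustion. Given an arbitrary exhaustion $\{K_j\}$, choose $r_j$ so that the coordinate ball $B_{r_j}$ has the same $g$-volume as $K_j$ (possible for $j$ large since $|K_j|_g\to\infty$), and define
\begin{equation*}
K_j' = \begin{cases} K_j & \text{if } |\partial^* K_j|_g \le |\partial B_{r_j}|_g,\\ B_{r_j} & \text{otherwise.}\end{cases}
\end{equation*}
Then $\{K_j'\}$ is an exhaustion with $|K_j'|_g=|K_j|_g$ and $|\partial^* K_j'|_g \le \min(|\partial^* K_j|_g,|\partial B_{r_j}|_g)$. Since the map $P\mapsto \tfrac{2V}{P} - \tfrac{P^{1/2}}{3\sqrt{\pi}}$ is decreasing in $P$ for fixed $V$, the sequence $\{K_j'\}$ is a competitor at least as good as $\{K_j\}$ in \eqref{miso}. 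Moreover $v_j'/a_j' \geq v(B_{r_j})/a(B_{r_j}) \to 1$ by AF applied to coordinate balls (the same argument as the second paragraph of the proof of Lemma~\ref{lemma_mcv}). Combined with the universal upper bound from step one, $v_j'/a_j'\to 1$.

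Putting these together, the sup on the right-hand side of \eqref{eqn_miso*} can be taken over exhaustions with $v_j/a_j\to 1$, on which the integrand equals $(1+o(1))\cdot 2(v_j - a_j)$. This gives ``$\leq$'' in \eqref{eqn_miso*}; the reverse inequality is immediate since for any such exhaustion the integrand agrees asymptotically with $2(v_j-a_j)$, making any exhaustion of the right-hand side's sup a valid competitor on the left. The only nuance to check is the finite/infinite case: if either side is $+\infty$ the modification argument shows both are, and if finite the $(1+o(1))$ factor is absorbed harmlessly. I do not foresee a serious obstacle; the only delicate item is the quantitative comparison of $g$-quantities to Euclidean ones to invoke the Euclidean isoperimetric inequality in step one, which is routine for $C^0$-AF metrics.
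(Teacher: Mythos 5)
Your proposal is correct and follows essentially the same route as the paper's own (sketched) proof: restrict to exhaustions whose isoperimetric ratio tends to the Euclidean optimum, using the $C^0$ asymptotic flatness together with the Euclidean isoperimetric inequality for the universal bound and the ball-replacement argument of \cite[Lemma 16]{JL1} (as in Lemma \ref{lemma_mcv}) for the improvement, then factor the integrand as a difference of cubes. You have simply written out in detail the steps the paper leaves implicit, and your verification that the replacement simultaneously improves both functionals is exactly what makes the argument close.
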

\begin{proof}
The proof is very similar to that of Lemma \ref{lemma_mcv}; one shows that in the definition of $m_{iso}$, one may restrict to exhaustions $\{K_j\}$ such that
$$\lim_{j \to \infty} \frac{|\partial^* K_j|^{3/2}}{|K_j|} = 6\sqrt{\pi}.$$
This follows from the $C^0$ asymptotic flatness and the Euclidean isoperimetric inequality, as well as the argument in \cite[Lemma 16]{JL1} to show that a suboptimal  competing exhaustion can be improved by replacement with large balls. At that point, the term appearing in the definition of $m_{iso}$ is factored as a difference of cubes, and the result follows.
\end{proof}

The factor of 2 appearing in \eqref{eqn_miso*} but not in the definition of $m_{CV}$ can be explained as follows. In the Schwarzschild space of mass $m$, for a coordinate ball of radius $r$, the volume radius is $r + \frac{3}{2} m + O(r^{-1})$, the area radius is $r+ m + O(r^{-1})$, and the capacity is $r + \frac{m}{2} + O(r^{-1})$.

\subsection{Higher dimensional formula}
Here we collect the analogous definitions for $m_{iso}$ and $m_{CV}$ in higher dimensions. The ADM mass of an asymptotically flat $n$-manifold, $n \geq 3$ (see \cite{BL} for example) is given by
$$m_{ADM}(M,g) = \lim_{r \to \infty} \frac{1}{2(n-1) \omega_{n-1}}\int_{S_r} \left(\partial_i g_{ij} - \partial_j g_{ii} \right) \frac{x^j}{|x|} dA,$$
where $\omega_{n-1}$ is the hypersurface area of the unit $(n-1)$-sphere, and the Schwarzschild metric of mass $m$ is given by
$$g_{ij} = \left(1+ \frac{m}{2|x|^{n-2}}\right)^{\frac{4}{n-2}} \delta_{ij}.$$
Capacity is defined as in \eqref{eqn_cap}, except with the $4\pi$ replaced with $(n-2)\omega_{n-1}$. 
Direct calculation, similar to Proposition \ref{prop_schwarz}, gives the volume, area, and capacity of coordinate balls in Schwarzschild:
\begin{align*}
V(r) &= \beta_n r^n + \frac{n \omega_{n-1} }{2(n-2)} mr^2  + O(r),\\
A(r) &= \omega_{n-1} r^{n-1} + \frac{(n-1)\omega_{n-1}}{n-2} mr + O(1),\\
\capac(B_r) &= r^{n-2} + \frac{m}{2},
\end{align*}
where $\beta_n = \frac{\omega_{n-1}}{n}$ is the volume of the unit $n$-ball. Using these, the natural generalization of Huisken's definition to dimension $n \geq 3$ is seen to be:
\begin{equation}
m_{iso}(M,g) = \sup_{\{K_j\}} \limsup_{j \to \infty} \frac{2}{\omega_{n-1}} \left(\frac{\omega_{n-1}}{|\partial^* K_j|}\right)^{\frac{2}{n-1}} \left(|K_j| -\frac{1}{n (\omega_{n-1})^\frac{1}{n-1}} |\partial^* K_j|^{\frac{n}{n-1}}\right).
\end{equation}
(Note that the difference in the volume and area radii (as in \eqref{eqn_miso*}) for Schwarzschild is $O(r^{3-n})$, so that expression does not limit to a nonzero multiple of the mass if $n >3$.)

Similar calculations motivate the following definition for the capacity-volume mass in dimension $n \geq 3$:
\begin{equation}
m_{CV}(M,g) = \frac{2(n-2)}{(n-1) \omega_{n-1}\capac(K_j)^{\frac{2}{n-2}}}  \left( |K_j| - \beta_n \capac(K_j)^{\frac{n}{n-2}}\right).
\end{equation}

From Bray's proof that the coordinate balls in Schwarzschild manifolds of positive mass are isoperimetric regions \cite{bray_thesis} and the higher-dimensional generalization due to Bray and Morgan \cite{bray_morgan}, it follows that $m_{iso} = m_{ADM}$ for $m>0$ Schwarzschild. Using the Bray--Morgan result in the proof of the isocapacitary inequality for Schwarzschild by Mantoulidis, Miao, and Tam \cite[Theorem 1.10]{MMT} (which was for $n=3$), we also have $m_{CV} = m_{ADM}$ for $m>0$ Schwarzschild in dimension $n \geq 3$.

\begin{bibdiv}
 \begin{biblist}

\bib{Ack}{article}{
   author={Acker, A.},
   title={On the qualitative theory of parametrized families of free
   boundaries},
   journal={J. Reine Angew. Math.},
   volume={393},
   date={1989},
   pages={134--167}
}

\bib{AS}{article}{
   author={Allen, B.},
   author={Sormani, C.},
   title={Relating Notions of Convergence in Geometric Analysis},
   eprint={https://arxiv.org/abs/1911.04522}
}

  \bib{ADM}{article}{
   author={Arnowitt, R.},
   author={Deser, S.},
   author={Misner, C.},
   title={Coordinate invariance and energy expressions in general relativity},
   journal={Phys. Rev. (2)},
   volume={122},
   date={1961},
   pages={997--1006},
}

\bib{Ba0}{article}{
   author={Bartnik, R.},
   title={The mass of an asymptotically flat manifold},
   journal={Comm. Pure Appl. Math.},
   volume={39},
   date={1986},
   number={5},
   pages={661--693},
}

\bib{Ba1}{article}{
   author={Bartnik, R.},
   title={New definition of quasilocal mass},
   journal={Phys. Rev. Lett.},
   volume={62},
   date={1989},
   number={20},
   pages={2346--2348}
}

\bib{Ba2}{article}{
   author={Bartnik, R.},
   title={Energy in general relativity},
   conference={
      title={Tsing Hua lectures on geometry \&\ analysis},
      address={Hsinchu},
      date={1990--1991},
   },
   book={
      publisher={Int. Press, Cambridge, MA},
   },
   date={1997},
   pages={5--27}
}

\bib{bray_npms}{article}{
	title={Negative point mass singularities in general relativity},
	author={Bray, H.},
	eprint={http://www.newton.ac.uk/webseminars/pg+ws/2005/gmr/0830/bray/},
	conference={
		title={Global problems in mathematical relativity},
		address={Isaac Newton Institute, University of Cambridge},
		date={2005-08-30}
	}
}

\bib{bray_thesis}{book}{
   author={Bray, H.},
   title={The Penrose inequality in general relativity and volume comparison
   theorems involving scalar curvature},
   note={Thesis (Ph.D.)--Stanford University},
   publisher={ProQuest LLC, Ann Arbor, MI},
   date={1997}
}

\bib{Bray_RPI}{article}{
   author={Bray, H.},
   title={Proof of the Riemannian Penrose inequality using the positive mass
   theorem},
   journal={J. Differential Geom.},
   volume={59},
   date={2001},
   number={2},
   pages={177--267}
}

\bib{zas}{article}{
   author={Bray, H.},
   author={Jauregui, J.},
   title={A geometric theory of zero area singularities in general
   relativity},
   journal={Asian J. Math.},
   volume={17},
   date={2013},
   number={3},
   pages={525--559}
}

\bib{BL}{article}{
   author={Bray, H.},
   author={Lee, D.},
   title={On the Riemannian Penrose inequality in dimensions less than
   eight},
   journal={Duke Math. J.},
   volume={148},
   date={2009},
   number={1},
   pages={81--106}
}

\bib{BM}{article}{
   author={Bray, H.},
   author={Miao, P.},
   title={On the capacity of surfaces in manifolds with nonnegative scalar
   curvature},
   journal={Invent. Math.},
   volume={172},
   date={2008},
   number={3},
   pages={459--475}
}

\bib{bray_morgan}{article}{
   author={Bray, H.},
   author={Morgan, F.},
   title={An isoperimetric comparison theorem for Schwarzschild space and
   other manifolds},
   journal={Proc. Amer. Math. Soc.},
   volume={130},
   date={2002},
   number={5},
   pages={1467--1472}
}

\bib{CESY}{article}{
   author={Chodosh, O.},
   author={Eichmair, M.},
   author={Shi, Y.},
   author={Yu, H.},
   title={Isoperimetry, scalar curvature, and mass in asymptotically flat Riemannian 3-manifolds},
   eprint={https://arxiv.org/abs/1606.04626}
}
   
\bib{Chr}{article}{
   author={Chru\'sciel, P.},
   title={Boundary conditions at spatial infinity from a Hamiltonian point
   of view},
   conference={
      title={Topological properties and global structure of space-time},
      address={Erice},
      date={1985},
   },
   book={
      series={NATO Adv. Sci. Inst. Ser. B Phys.},
      volume={138},
      publisher={Plenum, New York},
   },
   date={1986},
   pages={49--59}
}

\bib{EG}{book}{
   author={Evans, L.},
   author={Gariepy, R.},
   title={Measure theory and fine properties of functions},
   series={Textbooks in Mathematics},
   edition={Revised edition},
   publisher={CRC Press, Boca Raton, FL},
   date={2015}
}

\bib{FST}{article}{
   author={Fan, X.-Q.},
   author={Shi, Y.},
   author={Tam, L.-F.},
   title={Large-sphere and small-sphere limits of the Brown-York mass},
   journal={Comm. Anal. Geom.},
   volume={17},
   date={2009},
   number={1},
   pages={37--72},
}

\bib{FR}{article}{
   author={Flucher, M.},
   author={Rumpf, M.},
   title={Bernoulli's free-boundary problem, qualitative theory and
   numerical approximation},
   journal={J. Reine Angew. Math.},
   volume={486},
   date={1997},
   pages={165--204}
}

\bib{FS}{article}{
   author={Freire, A.},
   author={Schwartz, F.},
   title={Mass-capacity inequalities for conformally flat manifolds with
   boundary},
   journal={Comm. Partial Differential Equations},
   volume={39},
   date={2014},
   number={1},
   pages={98--119}
 }
	
\bib{FMP}{article}{
   author={Fusco, N.},
   author={Maggi, F.},
   author={Pratelli, A.},
   title={Stability estimates for certain Faber-Krahn, isocapacitary and
   Cheeger inequalities},
   journal={Ann. Sc. Norm. Super. Pisa Cl. Sci. (5)},
   volume={8},
   date={2009},
   number={1},
   pages={51--71}
}
	
\bib{GT}{book}{
   author={Gilbarg, D.},
   author={Trudinger, N.},
   title={Elliptic partial differential equations of second order},
   series={Classics in Mathematics},
   publisher={Springer-Verlag, Berlin},
   date={2001}
}	
	
\bib{HO}{article}{
   author={Henrot, A.},
   author={Onodera, M.},
   title={Hyperbolic solutions to
Bernoulli?s free boundary problem},
   eprint={https://arxiv.org/pdf/1909.06606.pdf}
}

\bib{HLS}{article}{
   author={Huang, L.-H.},
   author={Lee, D.},
   author={Sormani, C.},
   title={Intrinsic flat stability of the positive mass theorem for
   graphical hypersurfaces of Euclidean space},
   journal={J. Reine Angew. Math.},
   volume={727},
   date={2017},
   pages={269--299}
}

\bib{Hui1}{article}{
     author={Huisken, G.},
     title={An isoperimetric concept for mass and quasilocal mass},
     journal={Oberwolfach Reports, European Mathematical Society (EMS), Z\"urich},
   date={2006},
   volume={3},
   number={1},
   pages={87--88}
}

\bib{Hui2}{article}{
     author={Huisken, G.},
     title={An isoperimetric concept for the mass in general relativity},
 	 eprint={https://video.ias.edu/node/234},
     date={March 2009},
     journal={Accessed, 2019-10-24}
}

\bib{HI}{article}{
   author={Huisken, G.},
   author={Ilmanen, T.},
   title={The inverse mean curvature flow and the Riemannian Penrose
   inequality},
   journal={J. Differential Geom.},
   volume={59},
   date={2001},
   number={3},
   pages={353--437},
}

\bib{HY}{article}{
   author={Huisken, G.},
   author={Yau, S.-T.},
   title={Definition of center of mass for isolated physical systems and
   unique foliations by stable spheres with constant mean curvature},
   journal={Invent. Math.},
   volume={124},
   date={1996},
   number={1-3},
   pages={281--311}
}

\bib{J_HCI}{article}{
   author={Jauregui, J.},
   title={Invariants of the harmonic conformal class of an asymptotically
   flat manifold},
   journal={Comm. Anal. Geom.},
   volume={20},
   date={2012},
   number={1},
   pages={163--201}
}

\bib{J_LSC}{article}{
   author={Jauregui, J.},
   title={On the lower semicontinuity of the ADM mass},
   journal={Comm. Anal. Geom.},
   volume={26},
   date={2018},
   number={1},
   pages={85--111}
}

\bib{J_penrose}{article}{
   author={Jauregui, J.},
   title={Penrose-type inequalities with a Euclidean background},
   journal={Ann. Global Anal. Geom.},
   volume={54},
   date={2018},
   number={4},
   pages={509--527}
}

\bib{JL1}{article}{
   author={Jauregui, J.},
   author={Lee, D.},
   title={Lower semicontinuity of mass under $C^0$ convergence and Huisken's
   isoperimetric mass},
   journal={J. Reine Angew. Math.},
   volume={756},
   date={2019},
   pages={227--257}
}

\bib{JL2}{article}{
   author={Jauregui, J.},
   author={Lee, D.},
   title={Lower semicontinuity of ADM mass under intrinsic flat convergence},
   eprint={https://arxiv.org/abs/1903.00916}
}

\bib{LS}{article}{
   author={Lee, D.},
   author={Sormani, C.},
   title={Stability of the positive mass theorem for rotationally symmetric
   Riemannian manifolds},
   journal={J. Reine Angew. Math.},
   volume={686},
   date={2014},
   pages={187--220}
}

\bib{MMT}{article}{
   author={Mantoulidis, C.},
   author={Miao, P.},
   author={Tam, L.-F.},
   title={Capacity, quasi-local mass, and singular fill-ins},  
   eprint={https://arxiv.org/pdf/1805.05493.pdf}
}

\bib{MT}{article}{
   author={Miao, P.},
   author={Tam, L.-F.},
   title={Evaluation of the ADM mass and center of mass via the Ricci
   tensor},
   journal={Proc. Amer. Math. Soc.},
   volume={144},
   date={2016},
   number={2},
   pages={753--761}
}

\bib{Ner}{article}{
   author={Nerz, C.},
   title={Foliations by stable spheres with constant mean curvature for
   isolated systems without asymptotic symmetry},
   journal={Calc. Var. Partial Differential Equations},
   volume={54},
   date={2015},
   number={2},
   pages={1911--1946}
}

\bib{PS}{book}{
   author={P{\'o}lya, G.},
   author={Szeg{\"o}, G.},
   title={Isoperimetric Inequalities in Mathematical Physics},
   series={Annals of Mathematics Studies, no. 27},
   publisher={Princeton University Press},
   place={Princeton, N. J.},
   date={1951}
}

\bib{zas_robbins}{article}{
   author={Robbins, N.},
   title={Zero area singularities in general relativity and inverse mean curvature flow},
   journal={Classical Quantum Gravity},
   volume={27},
   date={2010},
   number={2}
}

\bib{SY}{article}{
	author={Schoen, R.},
	author={Yau, S.-T.},
	title={On the proof of the positive mass conjecture in general relativity},
	journal={Comm. Math. Phys.},
	volume={65},
	year={1979},
	pages={45--76},
}

\bib{Schw}{article}{
   author={Schwartz, F.},
   title={A volumetric Penrose inequality for conformally flat manifolds},
   journal={Ann. Henri Poincar\'{e}},
   volume={12},
   date={2011},
   number={1},
   pages={67--76}
}

\bib{Sor}{article}{
   author={Sormani, C.},
   title={Scalar curvature and intrinsic flat convergence},
   conference={
      title={Measure theory in non-smooth spaces},
   },
   book={
      series={Partial Differ. Equ. Meas. Theory},
      publisher={De Gruyter Open, Warsaw},
   },
   date={2017},
   pages={288--338}
}

\bib{SS}{article}{
   author={Sormani, C.},
   author={Stavrov Allen, I.},
   title={Geometrostatic manifolds of small ADM mass},
   journal={Comm. Pure Appl. Math.},
   volume={72},
   date={2019},
   number={6},
   pages={1243--1287}
}

\bib{SW}{article}{
   author={Sormani, C.},
   author={Wenger, S.},
   title={The intrinsic flat distance between Riemannian manifolds and other
   integral current spaces},
   journal={J. Differential Geom.},
   volume={87},
   date={2011},
   number={1},
   pages={117--199}
}

\bib{W}{article}{
	author={Witten, E.},
	title={A new proof of the positive energy theorem},
	journal={Comm. Math. Phys.},
	volume={80},
	year={1981},
	pages={381-402},
}	
\end{biblist}
\end{bibdiv}

\end{document}